\documentclass{amsart}

\usepackage[utf8]{inputenc}
\usepackage[T1]{fontenc}
\usepackage{verbatim}
\usepackage{graphicx}
\usepackage{graphicx,caption2,psfrag,float,color}
\usepackage{amssymb}
\usepackage{amscd}
\usepackage{amsmath}


\usepackage[T1]{fontenc}

\newtheorem{theorem}{Theorem}[section]

\newtheorem{cor}[theorem]{Corollary}
\newtheorem{lemma}[theorem]{Lemma}

\numberwithin{equation}{subsection}
\newtheorem{definition}[theorem]{Definition}




\pagestyle{plain}
\title{Large gaps between consecutive prime numbers containing perfect $k$-th
powers of prime numbers}
\author{Helmut Maier and Michael Th. Rassias}
\date{\today}
\address{Department of Mathematics, University of Ulm, Helmholtzstrasse 18, 89081 Ulm, Germany.}
\email{helmut.maier@uni-ulm.de}
\address{Institute of Mathematics, University of Z\"urich, Winterthurerstrasse 190, CH-8057 Zürich, Switzerland  \& Institute for Advanced Study, Einstein Drive, Princeton, New Jersey 08540 USA}
\email{michail.rassias@math.uzh.ch, michailrassias@math.princeton.edu}
\thanks{}

\begin{document}

 \maketitle
 
\begin{abstract} Let $k\geq 2$ be a fixed natural number. We establish the existence of infinitely many pairs of consecutive primes $p_n$, $p_{n+1}$ satisfying
$$ p_{n+1}-p_n\geq  c\:\frac{\log p_n\: \log_2 p_n\: \log_4 p_n}{\log_3 p_n}\:,$$
with $c$ being a fixed positive constant,  for which the interval $(p_n, p_{n+1})$ contains
the $k$-th power of a prime number.\\

\noindent\textbf{2010 Mathematics Subject Classification:}  11P32, 26D20%

\end{abstract}

\section{Introduction and Statement of Main Theorem}
In their paper \cite{konyagin}, K. Ford, D. R. Heath-Brown and S. Konyagin prove the 
existence of infinitely many ``prime-avoiding" perfect $k$-th powers for any positive integer $k$.\\
They give the following definition of \textbf{prime avoidance}: an integer $m$ is called prime avoiding with constant $c$ if $m+u$ is composite for all integers $u$ satisfying
$$|u|\leq c\:g_1(m)\:,$$
with $c$ being a positive constant and 
$$g_1(m)=\frac{\log m\log_2m\log_4m}{(\log_3m)^2}\:.$$
Here $\log_k x:=\log(\log_{k-1}x)$.\\
In \cite{maierrassias} the authors of the present paper extended this result by proving the existence of infinitely many prime-avoiding $k$-th powers of prime numbers.\\
Their method of proof consists in a combination of the method of \cite{konyagin} with
the matrix method of the first author \cite{maier}. The matrix $\mathcal{M}$ employed in this technique is of the form:
$$\mathcal{M}=(a_{r,u})_{\substack{1\leq r\leq P(x)^{D-1}\\ u\in\mathcal{B}}},$$
with $P(x)$ being a product of many small prime numbers and $D$ is a fixed positive integer, where the \textit{rows} 
$${R}(r)=\{a_{r,u}\::\: u\in\mathcal{B}  \}$$
of the matrix are \textit{translates} -- in closer or wider sense -- of the \textit{base-row} $\mathcal{B}$. Moreover, $\mathcal{B}$ is contained in an interval of consecutive integers and
consists of the few integers that are coprime to $P(x)$.\\
The columns of the matrix $\mathcal{M}$ are arithmetic progressions (or -- in the case of \cite{maierrassias} -- shifted powers of elements of arithmetic progressions).
The appearance of primes in these arithmetic progressions can be studied using
results on primes in arithmetic progressions.\\
The construction of the base-row $\mathcal{B}$ in its simplest form has been carried out by 
Paul Erd\H{o}s \cite{erdos}
and R. A. Rankin \cite{rankin} in their papers on large gaps between consecutive primes. They obtain the following result: Infinitely often
$$p_{n+1}-p_n\geq c\:g_1(n)\:,$$
where $c>0$ is a fixed constant. Until recently all improvements only concerned the constant $c$ (cf. \cite{maierpom}, \cite{pintz}, \cite{rankin2}).\\
In the papers \cite{ford}, \cite{ford2} and \cite{maynard} finally the function $g_1$ has been replaced by a function of a higher order of magnitude, solving a longstanding problem of Erd\H{o}s.\\
K. Ford, B. J. Green, S. Konyagin, J. Maynard and T. Tao \cite{ford2} have proved the following result:
$$p_{n+1}-p_n\geq c\: g_2(n)$$
infinitely often, where
$$g_2(m)=\frac{\log m\:\log_2 m\:\log_4 m}{\log_3 m}\:.$$
In this paper we combine the methods of the papers \cite{konyagin}, \cite{ford2} and  \cite{maynard} to
obtain the following theorem.
\begin{theorem}\label{thm1}
There is a constant $c>0$ and infinitely many $n$, such that
$$p_{n+1}-p_n\geq c\:\frac{\log p_n\:\log_2 p_n\: \log_4 p_n}{\log_3 p_n}$$
and the interval $[p_n,p_{n+1}]$ contains the $k$-th power of a prime.
\end{theorem}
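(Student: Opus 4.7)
The plan is to combine the matrix method of \cite{maierrassias} with the base-row construction of \cite{ford2}, upgrading the gap bound from $g_1$ to $g_2$ while retaining the $k$-th power condition. Concretely, I would keep the overall matrix architecture of \cite{maierrassias}---a matrix $\mathcal{M}=(a_{r,u})$ with $r\in[1,P(x)^{D-1}]$ and $u\in\mathcal{B}$, whose columns are arithmetic progressions of common difference $P(x)$ and whose rows are translates of a base-row $\mathcal{B}$---but replace the Erd\H{o}s--Rankin base-row by the base-row produced in \cite{ford2}. Thus $\mathcal{B}$ is taken to be a subset of $[1,y]$ of integers coprime to $P(x)$ with $y\asymp g_2(P(x)^D)$, where $P(x)$ is the product of primes up to a suitable threshold with $\log P(x)\asymp x$, and $D$ is a fixed integer chosen sufficiently large relative to $k$.

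Next I would rerun the sieve and hypergraph-covering arguments of \cite{ford2} inside this matrix. By assigning the "forbidden" residues of $a_{r,u}$ modulo suitable auxiliary primes in accordance with the FGKMT cover, one arranges that a positive proportion of the $P(x)^{D-1}$ rows $R(r)$ consist entirely of composite integers. This is the step that promotes the gap from $g_1$ to $g_2$. The underlying column-wise inputs are Bombieri--Vinogradov-type bounds for primes in arithmetic progressions to modulus $P(x)$, which are available since $P(x)$ is of moderate size relative to the heights involved.

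The $k$-th power condition is then introduced in the style of \cite{maierrassias}. For each prime $q$ with $q^k$ in the dyadic range $[P(x)^D,2P(x)^D]$, write $q^k=rP(x)+u$ with $u\in[1,P(x)]$, and call $q$ \emph{admissible} when $u\in\mathcal{B}$. If there exists an admissible prime $q$ for which the corresponding row $R(r)$ is prime-free, then the interval covering $R(r)$ is a prime gap of length $\geq c\,g_2(q^k)$ containing $q^k$, and letting $x$ traverse an infinite sequence yields infinitely many such configurations. A Siegel--Walfisz or Bombieri--Vinogradov estimate for $q^k\bmod P(x)$ averaged over primes $q$ shows that the supply of admissible primes grows without bound with $x$.

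The principal obstacle is the compatibility between the FGKMT cover (Step~2) and the admissibility counting (Step~3): one must argue that for at least one admissible $q$ the row index $r=(q^k-u)/P(x)$ falls into the positive-density set of prime-free rows. The natural route is to sum, over admissible primes $q$ and over the choices of shift data in the FGKMT construction, the indicator that $R(r)$ is prime-free, and to show that this double sum is positive. This requires a uniformity of the FGKMT cover with respect to the hook $u=q^k\bmod P(x)$, together with a joint equidistribution statement for the pair $(q,\,q^k\bmod P(x))$ as $q$ ranges over primes. Once this compatibility is established, the remaining sieve estimates, prime counts in arithmetic progressions, and density bounds for $k$-th powers of primes are standard adaptations of the machinery of \cite{ford2} and \cite{maierrassias}.
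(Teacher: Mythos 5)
Your proposal misses the central innovation of the paper, and as a result runs into an obstacle that you yourself flag (the ``compatibility'' between the FGKMT cover and the admissibility of $q$) but cannot resolve. In your architecture, the matrix entries are $a_{r,u}=rP(x)+u$ with $u$ ranging over the FGKMT base-row, and you hope that some $k$-th power $q^k$ of a prime happens to fall into a prime-free row. This is a post-hoc search: you would need a joint equidistribution statement for $(q,\;q^k\bmod P(x))$ strong enough to guarantee a coincidence between two sparse events, and that is precisely the step for which no viable argument is offered. There is no reason to expect the residues $q^k\bmod P(x)$ to hit the prime-free rows produced by an \emph{unmodified} FGKMT cover with positive probability, and proving they do would itself be a substantial new equidistribution theorem.

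The paper avoids this problem structurally. The matrix entries are not linear in $r$: they are $a_{r,u}=(m_0+1+rP(x))^k+u-1$. Every row automatically has $a_{r,1}=(m_0+1+rP(x))^k$ as its first element, so when $m_0+1+rP(x)$ is prime (which Gallagher's theorem guarantees for a positive proportion of $r$, since $P(x)$ is a good modulus), the $k$-th power of a prime is built into the row by construction. No coincidence needs to be engineered after the fact. The price is that the congruence structure of these shifted $k$-th powers must be made compatible with the sieving. This is where the paper's real work is: the residue classes $a_s\bmod s$ and $b_p\bmod p$ used in Theorems~\ref{thm2}--\ref{thm5} are \emph{restricted} to the sparse sets $\mathcal{A}$, $\mathcal{B}$ of classes of the form $1-(c+1)^k$ with $c\not\equiv -1$. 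With $m_0\equiv c_s\pmod s$ and $u\equiv 1-(c_s+1)^k\pmod s$, one then gets $a_{r,u}\equiv 0\pmod s$ for all $r$, so compositeness of $a_{r,u}$ follows from the sieve \emph{and} the $k$-th power sits in column $1$. Re-establishing the FGKMT sieve estimates (random construction, sieve weights, the multidimensional sieve of Maynard) under this restriction is nontrivial --- it requires character-sum input (Lemma~\ref{lima710}, Lemma~\ref{lima712}), the notion of $p$-good integers $\mathcal{G}(p)$, and the modified weights $w^*(p,n)$ --- and constitutes Sections~4--7 of the paper. Your proposal, by taking the FGKMT base-row off the shelf unchanged, skips exactly the modification that makes the theorem provable.
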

\noindent\textit{Remark.} With a minor modification we could also obtain perfect powers of primes that fulfill the prime-avoidance property, which means that the prime power is isolated on both sides. For the sake of simplicity we restrict ourselves to one-sided isolation. 
\section{Basic Structure of the Proof}
The main part of the proof consists of the construction of the base-row of our matrix. It will closely follow the construction carried out in \cite{ford2}.\\
We repeat the description of the basic structure of the proof given on page 5 of \cite{ford2}. The chain of implications is 
\[
\substack{\mathsection\ 6 \\ \text{Thm 5}\\ \mathsection\ 7.8}\ \ \ \substack{\Longrightarrow\\ \mathsection\ 6}\ \ \ 
\substack{\mathsection\ 4 \\ \text{Thm 4}}\ \ \ \substack{\Longrightarrow\\ \mathsection\ 4.5}\ \ \  \substack{\mathsection\ 3 \\ \text{Thm 2}}\ \ \ 
 \substack{\Longrightarrow\\ \mathsection\ 3}\ \ \  \text{Thm 1}\tag{2.1}
\]
The chain of implications is indicated in the bottom row whereas the number of the section of the statement is followed in the top row.\\
The construction of the ``base-row'' of our matrix, which will be described in \mbox{Theorem \ref{thm2},} will have the same structure. Each of the Theorems 1 to 5 of \cite{ford2} whose mutual relations are displayed in (2.1) will be replaced by a suitable modification. 
\section{Sieving a Set of Primes}
Given a large real number $x$, define
\[
y:=cx\:\frac{\log x\: \log_3 x}{\log_2 x}\:,\tag{3.1}
\]
where $c$ is a fixed positive constant. Let
\[
z:=x^{\log_3 x/(4\log_2 x)}\tag{3.2}
\]
and introduce the three disjoint sets of primes
\[
S:=\{s\ \text{prime}\::\: \log^{20}x<s\leq z  \}\:,\tag{3.3}
\]
\[
P:=\{p\ \text{prime}\::\:x/2<p\leq x  \}\:,\tag{3.4}
\]
\[
Q:=\{q\ \text{prime}\::\:x<q\leq y  \}\:.\tag{3.5}
\]
For residue classes $\vec{a}=(a_s\:\bmod\: s)_{s\in S}$ and $\vec{b}=(b_p\:\bmod\: p)_{p\in P}$ define the sifted sets
$$S(\vec{a}):=\{ n\in\mathbb{Z}\::\: n\not\equiv a_s(\bmod\:s)\ \text{for all}\ s\in S\}$$
and likewise
$$S(\vec{b}):=\{ n\in\mathbb{Z}\::\: n\not\equiv b_p(\bmod\:p)\ \text{for all}\ p\in P\}\:.$$
In modification of \cite{ford2} we now restrict the residue-classes used for sieving as follows:\\
We set
$$\mathcal{A}:=\{\vec{a}=(a_s\:\bmod\: s)_{s\in S}\::\: \exists\: c_s\ \text{such that}\ a_s\equiv\:1-(c_s+1)^k\:(\bmod\: s),\ c_s\not\equiv\:-1\:(\bmod\: s)\}$$
$$\mathcal{B}:=\{\vec{b}=(b_p\:\bmod\: p)_{p\in P}\::\: \exists\: d_p\ \text{such that}\ b_p\equiv\:1-(d_p+1)^k\:(\bmod\: p),\ d_p\not\equiv\:-1\:(\bmod\: p\}\:.$$
\begin{theorem}\label{thm2}(Sieving primes)
Let $x$ be sufficiently large and suppose that $y$ obeys (3.1). Then there are vectors $\vec{a}\in\mathcal{A}$ and $\vec{b}\in\mathcal{B}$, such that
\[
\#\{Q\cap S(\vec{a})\cap S(\vec{b}) \}\ll\frac{x}{\log x}\:.\tag{3.6}
\]
\end{theorem}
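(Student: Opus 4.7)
The plan is to mirror the proof of the corresponding ``sieving primes'' theorem (Theorem~2) of Ford--Green--Konyagin--Maynard--Tao \cite{ford2} step by step, intervening only to accommodate the restriction $\vec a\in\mathcal A$, $\vec b\in\mathcal B$. Set
$$\mathcal V_s:=\{1-(c+1)^k\bmod s\ :\ c\not\equiv -1\ (\bmod\ s)\},$$
and define $\mathcal V_p$ analogously for $p\in P$. Since the $k$-th powers form a subgroup of $(\Z/s\Z)^{*}$ of index $\gcd(k,s-1)\le k$, we have $|\mathcal V_s|=(s-1)/\gcd(k,s-1)\ge (s-1)/k$, a positive fraction of $s$; likewise for $|\mathcal V_p|$.

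I would retain the two-stage construction of \cite{ford2}: first a random choice of $\vec a$, then an iterated random-greedy hypergraph cover producing $\vec b$ (the content of their Theorems~4 and 5). The only modification is to draw $a_s$ uniformly from $\mathcal V_s$ rather than from $\Z/s\Z$, and likewise $b_p$ uniformly from $\mathcal V_p$. Because these densities are bounded below by a positive constant depending only on $k$, all first- and second-moment estimates for the random sieve would survive up to factors of $O_k(1)$: for any fixed prime $q\in Q$ one has $\Pr[q\not\equiv a_s\ (\bmod\ s)]=1-1/|\mathcal V_s|$, while pairwise correlations between distinct moduli remain $O(1/(ss'))$. Consequently the first stage would still leave, with positive probability, a set of surviving primes of the same size as in \cite{ford2}, up to a $k$-dependent factor.

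The hypergraph-covering stage requires degree and codegree estimates for the events ``$q$ is sieved by $b_p\in\mathcal V_p$''. The degree condition reduces to counting, for fixed $q\in Q$, those $p\in P$ for which $1-q$ is a nonzero $k$-th power modulo $p$; by Weil-type character-sum bounds combined with the Bombieri--Vinogradov input of \cite{ford2}, this count is of the expected order $|P|/\gcd(k,p-1)$ on average. The codegree condition, for pairs $(q_1,q_2)$ of surviving primes, further requires $q_1\equiv q_2\ (\bmod\ p)$ with $1-q_1$ a $k$-th power modulo $p$; the analogous estimate for this restricted count follows from the same Weil bounds. With these ingredients in hand, the Pippenger--Spencer-type greedy cover of \cite{ford2} would go through, producing $\vec b\in\mathcal B$ with $\#\{Q\cap S(\vec a)\cap S(\vec b)\}\ll x/\log x$, possibly after reducing the constant $c$ in (3.1) by a $k$-dependent factor.

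The main obstacle will be the uniform control of these character-sum inputs when the moduli and residue classes are jointly restricted by $k$-th power conditions. The Bombieri--Vinogradov and GPY-style estimates appealed to in \cite{ford2} must be re-established (or carefully quoted) for the indicators of $k$-th power residue classes; while the classical Weil bounds handle a single modulus, what is needed here is their average version over $p\in P$, and the technical work lies in verifying that the sieve-theoretic manipulations do not introduce losses overwhelming the factor $|\mathcal V_p|/p\gg_k 1$.
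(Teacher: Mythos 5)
Your overall strategy — reuse the Ford--Green--Konyagin--Maynard--Tao two-stage architecture (random $\vec a$, then hypergraph cover producing $\vec b$) and restrict the random choices to $k$-th power residue classes — matches the paper's approach, and your identification of the size $|\mathcal V_s|=(s-1)/\gcd(k,s-1)$ is the right starting point. However, the proposal glosses over the central new idea that makes the moment estimates actually go through, and you yourself flag this ("the main obstacle...") without resolving it.

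The concrete gap is in the claim that "all first- and second-moment estimates for the random sieve would survive up to factors of $O_k(1)$" because densities are bounded below. This is not the issue. What needs uniform control is the quantity
$$r(n,u)=\sum_{\substack{s\in S_u:\ \exists c_s:\ n\equiv 1-(c_s+1)^k\ (\bmod s)\\ c_s\not\equiv -1\ (\bmod s)}} s^{-1},$$
the "local degree" of $n$ with respect to the $k$-th power sieve across the moduli $s\in S_u$. When $a_s$ is drawn uniformly from $\mathcal V_s$, the probability that a given $n$ is \emph{not} sieved is a product over $s$ of factors $1 - d(u)\epsilon(n,s)/s$ where $\epsilon(n,s)\in\{0,1\}$ records whether $1-n$ is a $k$-th power mod $s$; this product concentrates at $\sigma^t$ only if $\sum_s s^{-1}\epsilon(n,s)$ is close to its mean $r^*(u)$. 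There is no reason a priori that every $n$ satisfies this: the $\epsilon(n,s)$ are not uniform in $n$. The paper's resolution is to define the set $\mathcal G$ of "good integers" where $|r(n,u)-r^*(u)|\le(\log x)^{-1/40}$, to define $\mathcal G(p)=\{n\in\mathcal G:\ n+(h_i-h_l)p\in\mathcal G\ \forall i,l\}$, and to force the sieve weight $w(p,\cdot)$ to vanish off $\mathcal G(p)$ (Definition 6.1, Definition 7.14). Lemma 6.1 is then proved only for $n_1,\dots,n_t\in\mathcal G$. The remainder of the proof (Theorems 7.20 and 7.24) consists of verifying via a second-moment/Chebyshev argument on $r(\tilde n,u)$ — weighted by Maynard's $w_n(\mathcal L_p)$ — that the exceptional $n\notin\mathcal G(p)$ contribute negligibly to the sieve sums. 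This restriction is a structural part of the proof, not a bookkeeping refinement, and its absence means your claimed moment estimates would simply be false for generic $n$.

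A second, smaller inaccuracy: drawing $b_p$ "uniformly from $\mathcal V_p$" does not reflect how the $\vec b$-stage actually works. The residue $b_p$ is not chosen uniformly; it is $\mathbf n'_p\bmod p$, where $\mathbf n'_p$ comes from the hypergraph cover applied to random variables whose law is the normalized sieve weight $w(p,\cdot)$. To force $\vec b\in\mathcal B$ one must therefore modify the weight $w_n$ itself so that it is supported on $n\equiv 1-(d_p+1)^k\ (\bmod p)$ with $d_p\not\equiv -1$ (the paper's $w^*(p,n)=D\,w_n\cdot 1_{(*)}$ in Definition 7.9), and then rerun Maynard's Propositions 9.1 and 9.2 for this twisted weight. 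The paper does this by expanding the indicator as $\frac1D\sum_{l=0}^{D-1}\chi_l(1-n)$ in Dirichlet characters of modulus $D=(p-1,k)$ (Lemma 7.10) and controlling the non-principal contributions via P\'olya--Vinogradov (Theorem 7.11) and, for the sum weighted by $1_P(L(n))$, via the Friedlander--Gong--Shparlinski bound on character sums over shifted primes (Lemma 7.12, Theorem 7.13). Your reference to "Weil-type" bounds points in roughly this direction, but the needed input is the shifted-prime character sum estimate, not a pointwise Weil bound, and the twisting must be built into the sieve weight rather than imposed afterward by a uniform random choice.
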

Here, we show why Theorem \ref{thm2} implies Theorem \ref{thm1}. We shall prove \mbox{Theorem \ref{thm2}} in subsequent sections.\\
Let $\vec{a}$ and $\vec{b}$ be as in Theorem \ref{thm2}. We extend the tuplet $\vec{a}$ to a tuplet $(a_p)_{p\leq x}$ of congruence classes $a_p(\bmod\:p)$ for all
primes $p\leq x$ by setting $a_p:=b_p$ for $p\in P$ and $a_p:=0$ for $p\not\in S\cup P$, and consider the sifted set
$$\mathcal{T}:=\{n\in[y]\setminus [x]\::\: n\not\equiv a_p\:(\bmod\:p)\ \text{for all}\ p\leq x  \}\:,$$
where $[y]$, $[x]$ denote the intervals $[0,y]$,  $[0,x]$ respectively.\\
The elements of $\mathcal{T}$, by construction, are not divisible by any prime in
$(0, \log^{20} x]$ or in $(z, x/2]$. Thus, each element must either be a $z$-smooth
number (i.e., a number with all prime factors at most $z$), or must consist of a prime greater than $x/2$, possibly multiplied by some additional primes that are all at least $\log^{20} x$. However, $y=o(x\log x)$. Thus, we see that an element of $\mathcal{T}$ is either a $z$-smooth number or a prime in $Q$. In the second case the element lies in
$$Q\cap S(\vec{a})\cap S(\vec{b})\:.$$
Conversely, every element of $Q\cap S(\vec{a})\cap S(\vec{b})$ lies in $\mathcal{T}$. Thus, $\mathcal{T}$ only differs from $Q\cap S(\vec{a})\cap S(\vec{b})$ by a set
$\mathcal{U}$ consisting of $z$-smooth numbers in $[y]$.\\
To estimate $\#\mathcal{U}$ let
$$u:=\frac{\log y}{\log z}\:,$$
so from (3.1), (3.2) one has
$$u\sim4\:\frac{\log_2 x}{\log_3 x}\:.$$
By standard counts for smooth numbers (e.g. De Bruijn's theorem \cite{bruijn}) and 
(3.5) we thus have
\begin{align*}
\#\mathcal{U}&\ll y\exp\left(-u\log u+O(u\log(\log(u+2))) \right)\\
&=\frac{y}{\log^{4+o(1)} x}=o\left( \frac{x}{\log x}\right).
\end{align*}
Thus, we find
\begin{lemma}\label{lem31}
$$\#\mathcal{T}\ll\frac{x}{\log x}\:.$$
\end{lemma}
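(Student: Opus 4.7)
The plan is to read off Lemma \ref{lem31} directly from the decomposition argument carried out in the excerpt, so the work reduces to checking two cardinality bounds. The decomposition
\[
\mathcal{T} = \bigl(Q\cap S(\vec a)\cap S(\vec b)\bigr)\,\cup\,\mathcal{U},
\]
with $\mathcal{U}$ consisting of $z$-smooth integers in $[y]$, has already been established in the paragraphs preceding the lemma by factoring each $n\in\mathcal{T}$ and using that $n\le y=o(x\log x)$ rules out two prime factors exceeding $x/2$. So it suffices to bound each piece.

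First I would invoke Theorem \ref{thm2}, which immediately gives $\#\{Q\cap S(\vec a)\cap S(\vec b)\}\ll x/\log x$ for the vectors $\vec a\in\mathcal{A}$, $\vec b\in\mathcal{B}$ that we have chosen; the extension of $\vec a$ to all primes $p\le x$ does not alter this count because the additional congruence conditions either coincide with those from $\vec b$ (for $p\in P$) or are modulo primes outside the range of the prime factors of elements of $Q$ (for $p\notin S\cup P$). Hence the contribution from the ``prime'' part of $\mathcal{T}$ is already of the desired size.

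Next I would estimate $\#\mathcal{U}$ using De Bruijn's theorem on smooth numbers \cite{bruijn}, with $u:=\log y/\log z$. From (3.1) and (3.2) a short computation yields $u\sim 4\log_2 x/\log_3 x$, hence $u\log u\sim 4\log_2 x$, and therefore
\[
\#\mathcal{U}\ll y\exp\!\bigl(-u\log u+O(u\log\log(u+2))\bigr)=\frac{y}{\log^{4+o(1)}x}.
\]
Substituting (3.1) gives $\#\mathcal{U}\ll x\log_3 x/(\log_2 x\cdot\log^{3+o(1)}x)=o(x/\log x)$, which is negligible compared to the main term. Adding the two bounds completes the proof.

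The only non-routine input is the smooth-number bound, but as a classical consequence of De Bruijn's result with the specific choice of parameters (3.1)--(3.2), no real obstacle arises; the point of the parameters $y$ and $z$ is precisely to make this smooth-number contribution lose a power of $\log x$ against the Erd\H{o}s--Rankin size $x/\log x$ coming from Theorem \ref{thm2}. Thus this lemma is essentially a repackaging step, and the genuine work still lies ahead in the proof of Theorem \ref{thm2}.
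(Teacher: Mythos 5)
Your proof is essentially identical to the paper's: it uses the same decomposition of $\mathcal{T}$ into the sifted primes $Q\cap S(\vec a)\cap S(\vec b)$ (bounded by Theorem \ref{thm2}) and the $z$-smooth exceptional set $\mathcal{U}$ (bounded by De Bruijn's theorem with $u=\log y/\log z\sim 4\log_2 x/\log_3 x$). The paper records $\#\mathcal{U}\ll y/\log^{4+o(1)}x=o(x/\log x)$ exactly as you do, so this is a faithful reproduction of the intended argument.
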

We now further reduce the sifted set $\mathcal{T}$ by using the prime numbers
from the interval $(x, C_0x]$, $C_0>1$ being a sufficiently large constant.\\
Here we follow -- with some modification in the notation -- the papers \cite{konyagin},
\cite{maierrassias}.\\
We distinguish the cases $k$ odd and $k$ even.
\begin{definition}
Let
\begin{eqnarray}
\nonumber\\
\tilde{P}&=&\left\{ 
  \begin{array}{l l}
    p\::\:\:  &x<p\leq C_0x,\ p\equiv2\:(\bmod\:3), \text{if $k$ is odd}\vspace{2mm}\\ 
    p\::\:\:  &x<p\leq C_0x,\ p\equiv3\:(\bmod\:2k), \text{if $k$ is even.}\vspace{2mm}\\ 
  \end{array} \right.
\nonumber
\end{eqnarray}
For $k$ even and $\delta>0$, we set
$$\mathcal{U}=\left\{ u\in[0,y]\::\: \left( \frac{-u}{p}\right)=1\ \text{for at most}\ \frac{\delta x}{\log x}\ \text{primes}\ p\in\tilde{P} \right\}\:.$$
\end{definition}
\begin{lemma}\label{lem32}
We have 
$$\#\mathcal{U}\ll_{\varepsilon}x^{1/2+2\varepsilon}\:.$$
\end{lemma}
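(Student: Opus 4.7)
\medskip
\noindent\textit{Plan of proof.} Since $k$ is even, $4\mid 2k$, so every $p\in\tilde{P}$ satisfies $p\equiv 3\pmod{4}$, hence $\left(\frac{-1}{p}\right)=-1$ and $\left(\frac{-u}{p}\right)=-\left(\frac{u}{p}\right)$. The condition $u\in\mathcal{U}$ is thereby equivalent to: $\left(\frac{u}{p}\right)\in\{0,+1\}$ for at least $|\tilde{P}|-\delta x/\log x$ primes $p\in\tilde{P}$. Writing any $u\in[0,y]$ uniquely as $u=v^2 w$ with $w$ squarefree, and noting that $v\leq\sqrt{y}<x<p$ forces $\gcd(v,p)=1$ for every $p\in\tilde{P}$, this condition depends only on the squarefree part $w$: in terms of
\[
T(w):=\sum_{p\in\tilde{P}}\left(\frac{w}{p}\right),
\]
one has $T(w)\geq c|\tilde{P}|$ for some constant $c=c(\delta,k)>0$, after absorbing the $O(\log y/\log x)=O(1)$ primes $p\mid w$.

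The \emph{diagonal} case $w=1$ contributes at most $\lfloor\sqrt{y}\rfloor\ll x^{1/2+\varepsilon}$ perfect squares to $\mathcal{U}$, already within the permitted bound. For the remaining squarefree $w\geq 2$ the goal is to show that very few are \emph{exceptional}, i.e.\ satisfy $T(w)\geq c|\tilde{P}|$. The analytic heart of the argument is the Heath-Brown quadratic large sieve: for every $\varepsilon>0$ and all complex coefficients $(a_n)$ supported on squarefree odd integers,
\[
\sum_{w\leq W}^{\ast}\Bigl|\sum_{n\leq N}a_n\left(\frac{n}{w}\right)\Bigr|^{2}\ll_{\varepsilon}(WN)^{\varepsilon}(W+N)\sum_n|a_n|^{2},
\]
where $\sum^{\ast}$ is over squarefree odd $w$. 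We apply this with $a_p=\mathbf{1}_{p\in\tilde{P}}$; quadratic reciprocity converts $\left(\frac{w}{p}\right)$ to $\pm\left(\frac{p}{w}\right)$, and since $p\equiv 3\pmod 4$ the sign $(-1)^{(w-1)/2}$ is independent of $p$ and disappears upon squaring. Even squarefree $w=2w'$ are handled identically after absorbing $\left(\frac{2}{p}\right)$ (constant on residue classes of $\tilde{P}$ modulo $8$) into the coefficients. Setting $W=y$, $N\asymp x$, the upshot is
\[
\sum_{w\leq y}^{\ast}T(w)^{2}\ll_{\varepsilon}(yx)^{\varepsilon}(y+|\tilde{P}|)|\tilde{P}|\ll x^{2+\varepsilon},
\]
whence the number of exceptional squarefree $w\geq 2$ is at most $x^{2+\varepsilon}/(c|\tilde{P}|)^{2}\ll x^{\varepsilon}\log^{2}x$.

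For each exceptional $w$ the number of $u\leq y$ of the form $v^2 w$ is $\lfloor\sqrt{y/w}\rfloor\leq\sqrt{y}$, so
\[
\#\mathcal{U}\leq\lfloor\sqrt{y}\rfloor+\sqrt{y}\cdot\#\{\text{exceptional }w\}\ll x^{1/2+2\varepsilon}
\]
after adjusting $\varepsilon$. The principal obstacle is that naive character-sum estimates cannot deliver an $x^{1/2+o(1)}$ bound: a direct $2r$-th moment of $T(w)$ produces quadratic character sums of conductor up to $x^{2r}$, and both Pólya-Vinogradov and Burgess fail to save enough against such large moduli relative to $y\sim x\log x$. Heath-Brown's mean-value theorem for real characters, coupled with the $u=v^2 w$ parametrization that surgically isolates the unavoidable contribution of perfect squares, supplies exactly the saving one needs.
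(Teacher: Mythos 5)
Your argument is correct, and it is essentially a full reconstruction of the proof that the paper leaves to its reference: the paper's own ``proof'' of Lemma~\ref{lem32} is the single line ``This is formula (4) of \cite{konyagin},'' so it gives no argument at all. What you supply --- peeling off the square part $u=v^2w$, reducing membership in $\mathcal{U}$ to a lower bound $T(w)\gg\#\tilde P$ on the real character sum over $\tilde P$ (valid once $\delta$ is small enough relative to the density of $\tilde P$, which your $c=c(\delta,k)$ correctly flags), invoking quadratic reciprocity to swap $\bigl(\tfrac{w}{p}\bigr)$ for $\bigl(\tfrac{p}{w}\bigr)$ with a $p$-independent sign thanks to $p\equiv3\pmod4$, and then bounding the second moment of $T(w)$ over squarefree $w\leq y$ by Heath-Brown's quadratic large sieve --- is the standard route and the one used in the Ford--Heath-Brown--Konyagin paper being cited. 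Two small cosmetic points: the Heath-Brown bound yields $(W+N)\sum_n|a_n|^2=(y+C_0x)\#\tilde P$ rather than $(y+\#\tilde P)\#\tilde P$ (these are of the same order so your final $\ll x^{2+\varepsilon}$ is fine), and the implied constant in the lemma depends on $\delta$ and $k$ as well as $\varepsilon$, which your argument makes explicit even though the lemma's $\ll_\varepsilon$ notation suppresses it.
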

\begin{proof}
This is formula (4) of \cite{konyagin}.
\end{proof}
\begin{lemma}\label{lem33}
There are pairs $(u,p_u)$ with $u\in\mathcal{T}$, $p_u\in\tilde{P}$, such that all $u\in\mathcal{T}$ satisfy a congruence
\[
u\equiv 1-(e_u+1)^k\:(\bmod\: p_u)\ \text{where}\ e_u\not\equiv-1\:(\bmod\: p_u)\tag{3.7}
\]
with the possible exceptions of $u$ from an exceptional set $V$ with
$$\# V\ll x^{1/2+2\varepsilon}\:.$$
\end{lemma}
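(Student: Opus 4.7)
The plan is to reduce the assertion to a statement about the solvability of a $k$-th power congruence, and then to invoke (and in one case generalize) Lemma \ref{lem32}. Given $u\in\mathcal T$ and a prime $p\in\tilde P$, the congruence (3.7) is solvable exactly when $1-u$ is a \emph{nonzero} $k$-th power residue modulo $p$. Define
\[
V:=\{u\in\mathcal T\::\:1-u\text{ is not a nonzero }k\text{-th power residue mod any }p\in\tilde P\}.
\]
The claim then reduces to $\#V\ll_\varepsilon x^{1/2+2\varepsilon}$, and I would split the argument according to the parity of $k$.

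\textbf{Case $k$ even.} A short computation shows that $p\equiv 3\pmod{2k}$ forces $\gcd(k,p-1)=2$: writing $k=2^a k'$ with $k'$ odd, one has $p-1=2(1+km)$ with $1+km$ odd and coprime to $k'$, so $\gcd(p-1,2^a)=2$ and $\gcd(p-1,k')=1$. Hence the $k$-th power residues modulo $p$ coincide with the quadratic residues modulo $p$, and (3.7) is solvable iff $\bigl(\tfrac{1-u}{p}\bigr)=1$ with $1-u\not\equiv 0\pmod p$. Up to the harmless translation $u\mapsto 1-u$, this is precisely the setup of Lemma \ref{lem32}, which is formula (4) of \cite{konyagin}, and which is insensitive to a bounded shift of the argument. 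Since $\#\tilde P\gg x/\log x$ by the Siegel--Walfisz theorem, any $u\in\mathcal T$ lying outside the shifted $\mathcal U$ admits a valid $p_u$; hence in this case $V$ is contained in (a shift of) $\mathcal U$ and $\#V\ll_\varepsilon x^{1/2+2\varepsilon}$.

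\textbf{Case $k$ odd.} The condition $p\equiv 2\pmod 3$ removes the primes $p\equiv 1\pmod 3$ and thereby keeps $d(p):=\gcd(k,p-1)$ under control, in particular when $3\mid k$. For $u\neq 1$, set
\[
N(u):=\#\{p\in\tilde P\::\:1-u\text{ is a nonzero }k\text{-th power residue mod }p\}.
\]
Using orthogonality, for each $p\in\tilde P$ and $1-u\not\equiv 0\pmod p$,
\[
\mathbf 1_{(\F_p^*)^k}(1-u)=\frac{1}{d(p)}\sum_{\chi^{d(p)}=1}\chi(1-u),
\]
so that $N(u)$ splits into a main term of order $x/\log x$ (from the trivial character) plus character sums. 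A second moment calculation,
\[
\sum_{u\in[0,y]}\bigl(N(u)-\mathrm{MainTerm}(u)\bigr)^2,
\]
is expanded and estimated by a P\'olya--Vinogradov/Weil type bound applied to each nontrivial character, parallel to the proof of (4) in \cite{konyagin}. This yields that $N(u)<\delta x/\log x$ holds for at most $\ll_\varepsilon x^{1/2+2\varepsilon}$ values of $u$, which gives $\#V\ll_\varepsilon x^{1/2+2\varepsilon}$ in the odd case as well.

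The main obstacle is the odd case: in the even case everything collapses to a quadratic character and Lemma \ref{lem32} may be applied essentially verbatim, whereas for odd $k$ the relevant character has order $d(p)$ depending on $p$, and one must make sure that the character sum bounds (and the ensuing variance estimate) hold uniformly over $p\in\tilde P$. Once this uniformity is established, combining the two cases gives the stated bound $\#V\ll x^{1/2+2\varepsilon}$, completing the proof of Lemma \ref{lem33}.
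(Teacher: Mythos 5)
Your treatment of the even case is essentially the paper's: $p\equiv 3\pmod{2k}$ forces $\gcd(k,p-1)=2$, so $k$-th power residues coincide with quadratic residues and the exceptional set is a (shift of) $\mathcal U$, bounded by Lemma~\ref{lem32}. The odd case, however, is where you have missed the point of the construction. The paper's entire argument for odd $k$ is the one-line observation that (3.7) is solvable for \emph{every} $u$ once $p\equiv 2\pmod 3$: the congruence restriction on $\tilde P$ is chosen so that $\gcd(k,p-1)=1$, whence $c\mapsto c^k$ is a bijection on $\mathbb F_p^*$ and $1-u$ is automatically a nonzero $k$-th power residue as soon as $p\nmid 1-u$. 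Since $|1-u|\le y$ while $\#\tilde P\gg x/\log x$, a prime $p_u\in\tilde P$ with $p_u\nmid 1-u$ always exists, so the odd case contributes \emph{nothing} to $V$; the bound $\#V\ll x^{1/2+2\varepsilon}$ comes solely from $\mathcal U$ in the even case.

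Concretely, the gap in your write-up is that the second-moment character-sum argument you sketch for odd $k$ is both unnecessary and incomplete: you defer the key estimate to a computation ``parallel to'' \cite{konyagin} without carrying it out, and you identify the varying order $d(p)=\gcd(k,p-1)$ and the uniformity of the resulting character bounds as the ``main obstacle'' --- but that obstacle is precisely what the arithmetic-progression condition on $\tilde P$ is designed to remove. Once you note that the $k$-th power map is a bijection on $\mathbb F_p^*$ for the primes in $\tilde P$ (for odd $k$), no sieving, no orthogonality expansion, and no P\'olya--Vinogradov/Weil input is needed at all; Lemma~\ref{lem33} then reduces immediately to Lemma~\ref{lem31} and Lemma~\ref{lem32}, exactly as the paper states.
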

\begin{proof}
If $k$ is odd, the congruence (3.7) is solvable, whenever $p\equiv 2\:(\bmod\:3)$.\\
If $k$ is even, the congruence is solvable whenever $p\equiv 3\:(\bmod\:2k)$ and
$\left(\frac{-u}{p} \right)=1$. The claim now follows from Lemma \ref{lem31} and
Lemma \ref{lem32}.
\end{proof}
We now conclude the deduction of Theorem \ref{thm1} by the application of the matrix method. The following definition is borrowed from \cite{maier}.
\begin{definition}
Let us call an integer $q>1$ a ``good'' modulus, if $L(s,\chi)\neq 0$ for all characters $\chi\: \bmod\:q$ and all $s=\sigma+it$ with 
$$\sigma >1-\frac{C_2}{\log(q(|t|+1))} \:.$$
\end{definition}
This definition depends on the size of $C_2>0$.
\begin{lemma}\label{lem34}
There is a constant $C_2>0$, such that, in terms of $C_2$, there exist arbitrary large
values of $x$, for which the modulus
$$P(x)=\prod_{p<x} p$$
is good.
\end{lemma}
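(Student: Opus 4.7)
My plan is to combine the classical zero-free region for Dirichlet $L$-functions with the Landau--Page theorem and the Deuring--Heilbronn repulsion phenomenon, since the only obstruction to a given $P(x)$ being good is a possible Siegel zero of a real primitive character. Let $c_{0}>0$ be an absolute constant such that for every primitive Dirichlet character $\chi^{*}$ modulo $q^{*}$, $L(s,\chi^{*})$ has no zero with $\sigma>1-c_{0}/\log(q^{*}(|t|+2))$, with the sole possible exception of a single real (Siegel) zero when $\chi^{*}$ is real. I would set $C_{2}:=c_{0}/2$; any constant strictly less than $c_{0}$ will in fact suffice.

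The first step is the standard reduction: every character $\chi$ modulo $P(x)$ is induced from a primitive character $\chi^{*}$ of conductor $q^{*}\mid P(x)$, and $L(s,\chi)$ and $L(s,\chi^{*})$ share their zeros in the critical strip. Since $q^{*}\le P(x)$, the bad region $\sigma>1-C_{2}/\log(P(x)(|t|+1))$ lies inside the classical zero-free region for $L(s,\chi^{*})$, so that if $P(x)$ fails to be good then some real primitive $\chi_{x}^{*}$ with $q_{x}^{*}\mid P(x)$ must carry a Siegel zero $\beta_{x}=1-\delta_{x}$ with $\delta_{x}<C_{2}/\log P(x)$. By Landau--Page, this $\chi_{x}^{*}$ is uniquely determined at scale $P(x)$.

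I would then argue by contradiction, assuming $P(x)$ is bad for every $x\ge x_{0}$. Either $\chi_{x}^{*}$ eventually stabilises at a fixed character $\chi^{*}$, or it takes infinitely many distinct values. The first alternative is immediate: $\beta_{x}$ would be a real zero of a fixed $L(s,\chi^{*})$ with $\beta_{x}>1-C_{2}/\log P(x)\to 1$, forcing real zeros of $L(s,\chi^{*})$ to accumulate at $s=1$, in contradiction with $L(1,\chi^{*})\ne 0$ and the analyticity of $L$. In the second alternative I extract a sequence $x_{1}<x_{2}<\cdots$ with pairwise distinct Siegel characters $\chi_{n}^{*}$ of conductors $q_{n}^{*}$ and Siegel zeros $\beta_{n}=1-\delta_{n}$ satisfying $\delta_{n}<C_{2}/\log P(x_{n})$.

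The hardest step is closing this second alternative via Deuring--Heilbronn. The existence of the Siegel zero of $\chi_{n}^{*}$ imposes, on every other primitive $\chi\neq\chi_{n}^{*}$, an enlarged zero-free region of the shape $\sigma>1-c\log(e/\delta_{n})/\log(q_{n}^{*}q(|t|+2))$, valid when $\delta_{n}$ is sufficiently small. Applying this with $\chi=\chi_{n+1}^{*}$, using $q_{n}^{*}q_{n+1}^{*}\le P(x_{n+1})^{2}$ together with $\delta_{n+1}<C_{2}/\log P(x_{n+1})$, one formally derives $\log(1/\delta_{n})=O(C_{2}/c)$, so that $\delta_{n}$ is bounded below by a positive absolute constant, contradicting $\delta_{n}\to 0$. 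The technical obstacle is ensuring that the Deuring--Heilbronn hypothesis on the product $\delta_{n}\log(q_{n}^{*}q_{n+1}^{*})$ is met along our sequence; a preliminary application of Landau--Page already forces the geometric growth $\log P(x_{n+1})\ge 2\log P(x_{n})$, and combined with an effective form of Deuring--Heilbronn the iteration closes. Alternatively, one may directly invoke the analogous statement from the first author's paper \cite{maier}, where this lemma is established as the cornerstone of the matrix method.
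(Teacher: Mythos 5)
The paper's own ``proof'' is a one-line citation of Lemma 1 of \cite{maierpom}, so the concluding option you offer---simply invoking the corresponding statement from \cite{maier}---is essentially what the authors do and would have sufficed on its own. Your from-scratch sketch has the correct skeleton (classical zero-free region, reduction to a single real primitive Siegel character via Landau--Page, dichotomy between a stabilising and a non-stabilising exceptional character, and Case 1 handled correctly), but Case 2 as written has a genuine gap. The Deuring--Heilbronn region has the shape $\sigma > 1 - c\log\bigl(1/(\delta_n\log T)\bigr)/\log T$ and is nontrivial only when $\delta_n\log T$ is bounded; with $T\asymp q_n^*q_{n+1}^* \leq P(x_{n+1})^2$ you therefore need $\delta_n\log P(x_{n+1})=O(1)$, which does not follow from $\delta_n<C_2/\log P(x_n)$ for an arbitrary increasing sequence $x_n$. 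Worse, the ``geometric growth $\log P(x_{n+1})\ge 2\log P(x_n)$'' that you propose as the way to secure the hypothesis pushes $\delta_n\log P(x_{n+1})$ upward, not downward, so it cannot be the missing ingredient.

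The missing ingredient is the choice of the sequence: take the $x_n$ to be the consecutive thresholds at which the exceptional character changes identity. Since $\chi_n^*$ remains exceptional for every primorial $P(x)$ with $x_n\le x<x_{n+1}$, and since the ratio of logarithms of consecutive primorials tends to $1$, one obtains $\delta_n\log P(x_{n+1})=C_2\bigl(1+o(1)\bigr)$ (the lower bound $\ge C_2$ coming from the fact that $\chi_n^*$ is, by Landau--Page uniqueness, no longer exceptional at $P(x_{n+1})$). This supplies the boundedness your Deuring--Heilbronn step needs; but once it is in hand, the much more elementary Landau two-zero repulsion $\max(\delta_n,\delta_{n+1})\ge c_1/\log(q_n^*q_{n+1}^*)\ge c_1/(2\log P(x_{n+1}))$, combined with $\delta_{n+1}\log P(x_{n+1})<C_2$, already forces $C_2\ge c_1/2-o(1)$, which fails once $C_2$ is fixed smaller than, say, $c_1/4$. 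So Case 2 closes without Deuring--Heilbronn at all, and the lemma follows with $C_2$ chosen small enough for the classical zero-free region, Landau--Page, and the repulsion bound to apply simultaneously.
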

\begin{proof}
This is Lemma 1 of \cite{maierpom}. 
\end{proof}
\begin{lemma}\label{lem35}
Let $q$ be a good modulus. Then
$$\pi(x;q,a)\gg\frac{x}{\phi(q)\log x}\:,$$
where $\phi(\cdot)$ denotes Euler's totient function,
uniformly for $(a,q)=1$ and $x\geq q^D$. Here the constant $D$ depends only on the value of $C_2$ in Lemma \ref{lem34}.
\end{lemma}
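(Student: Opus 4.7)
The plan is classical: combine the explicit formula for $\psi(x;q,a)$ with the uniform zero-free region supplied by the good-modulus hypothesis, then pass from $\psi$ to $\pi$. First, by orthogonality of Dirichlet characters,
$$\psi(x;q,a)=\frac{1}{\phi(q)}\sum_{\chi\bmod q}\bar\chi(a)\,\psi(x,\chi),\qquad \psi(x,\chi):=\sum_{n\le x}\Lambda(n)\chi(n),$$
so the principal character produces the main term $x/\phi(q)$, and it suffices to bound the contribution of each non-principal character. I would apply the truncated Perron-type explicit formula
$$\psi(x,\chi)=-\sum_{|\gamma|\le T}\frac{x^{\rho}}{\rho}+O\!\left(\frac{x\log^{2}(qxT)}{T}\right),$$
where $\rho=\beta+i\gamma$ runs over the nontrivial zeros of $L(s,\chi)$.

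Next I would exploit the good-modulus hypothesis, which asserts $\beta\le 1-C_2/\log(q(|\gamma|+2))$ for every such zero. Combined with the classical count $N(T+1,\chi)-N(T,\chi)\ll\log(q(|T|+2))$ and, crucially, Gallagher's log-free zero density estimate $\sum_\chi N(\alpha,T,\chi)\ll(qT)^{c(1-\alpha)}$, the sum $\sum_\chi\bigl|\sum_\rho x^\rho/\rho\bigr|$ may be written as a Stieltjes integral against the density and, after integration by parts, is dominated by a quantity of the shape
$$\phi(q)\,x\,\log x\cdot\exp\!\left(-\frac{C_2\log x}{\log(qT)}\right).$$
The hypothesis $x\ge q^{D}$ allows me to choose $T$ so that $\log(qT)\le c'\log x/D$, a choice still compatible with the truncation-error term after we also insist $T$ be somewhat larger than $\phi(q)\log^{2}x$. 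The exponent in the display then becomes $-c''D$, a quantity that can be driven arbitrarily negative by taking $D$ large in terms of $C_2$.

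Assembling these pieces I obtain
$$\psi(x;q,a)=\frac{x}{\phi(q)}\bigl(1+o(1)\bigr)$$
uniformly for $(a,q)=1$ and $x\ge q^{D}$, and partial summation converts this to the desired lower bound $\pi(x;q,a)\gg x/(\phi(q)\log x)$.

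The main obstacle is the bound on the zero sum: the de la Vallée Poussin-type zero-free region on its own yields only a constant-factor saving per character, which cannot absorb the factor $\phi(q)$ arising when one sums over characters, once $q$ is permitted to be as large as $x^{1/D}$. The decisive ingredient is the combination of that uniform zero-free region with the log-free density estimate, which upgrades the constant saving into a genuine gain precisely when $D$ is chosen sufficiently large relative to $C_2$.
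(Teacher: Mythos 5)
The paper does not actually prove this lemma: it is quoted verbatim as Lemma~2 of \cite{maier}, which in turn is attributed to Gallagher \cite{gallagher}, so your attempt to reconstruct the argument from scratch is a genuinely different route. Your outline has the right ingredients, and you correctly identify the crucial point (that the de la Vall\'ee Poussin-type saving of $\exp(-C_2\log x/\log(qT))$ per character is wiped out by the $\phi(q)$ characters unless it is coupled with a log-free density estimate). That diagnosis is exactly right.

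However, the intermediate bound you write down is wrong in a way that breaks the argument. You claim
\[
\sum_{\chi\bmod q}\Bigl|\sum_{\rho}\frac{x^{\rho}}{\rho}\Bigr|\ \ll\ \phi(q)\,x\,\log x\cdot\exp\!\Bigl(-\frac{C_2\log x}{\log(qT)}\Bigr),
\]
but the factor $\phi(q)$ should not appear. After dividing by $\phi(q)$ and comparing with the main term $x/\phi(q)$, your bound leaves an error-to-main-term ratio of order $\phi(q)\log x\cdot e^{-C_2D/2}$, and since $q$ may be as large as $x^{1/D}$ this ratio tends to infinity with $x$ for any fixed $D$: choosing $D$ large in terms of $C_2$ does nothing, because the exponential is a constant while $\phi(q)$ grows. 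The whole purpose of Gallagher's log-free estimate $\sum_{\chi}N(\alpha,T,\chi)\ll(qT)^{c(1-\alpha)}$ is precisely to replace the naive $\phi(q)\cdot\log(qT)$ count of low-lying zeros by a quantity that is essentially bounded near $\alpha=1$. Carrying out the Stieltjes integration by parts correctly (say dyadically in $|\gamma|$) one finds that the zero sum, \emph{summed over all $\chi$}, is $\ll x\exp(-C_2\log x/\log(qT))$, with no $\phi(q)$ and no $\log x$ in front. Only then does the ratio become $O(e^{-C_2D/2})$, which is $<1/2$ once $D$ is large enough relative to $C_2$, and $\pi(x;q,a)\gg x/(\phi(q)\log x)$ follows by partial summation. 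So the idea is right but you have mis-stated the output of the density estimate; as written, the key inequality does not establish the lemma.
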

\begin{proof}
This result, which is due to Gallagher \cite{gallagher}, is Lemma 2 from \cite{maier}.
\end{proof}
We now define the matrix $\mathcal{M}$.
\begin{definition}
Choose $x$, such that $P(C_0x)$ is a good modulus. Let $\vec{a}\in\mathcal{A}$
and $\vec{b}\in\mathcal{B}$ be given. From the definition of $\mathcal{A}$ and $\mathcal{B}$, there are 
$$(c_s\: \bmod\: s)_{s\in S}\ \ \text{and}\ \ (d_p\: \bmod\: p)_{p\in P}\:,\ c_s\not\equiv -1\:(\bmod\: s),\: d_p\not\equiv -1\:(\bmod\: p)\:, $$
such that
\begin{align*}
\vec{a}=(1-(c_s+1)^k\: \bmod\:s)_{s\in S}\ \ \ \text{and}\ \ \ \vec{b}=(1-(d_p+1)^k\: \bmod\:p)_{p\in P}\:.
\end{align*}
We now determine $m_0$ by 
$$1\leq m_0<P(C_0x)$$
and the congruences 
\begin{align*}
m_0&\equiv\: c_s\:(\bmod\: s)\\
m_0&\equiv\: d_p\:(\bmod\: p)\\
m_0&\equiv\: 0\:(\bmod\: q),\ \ q\in(1,x],\ q\not\in S\cup P\tag{3.8}\\
m_0&\equiv\: e_u\:(\bmod\: p_u),\ \ (e_u,p_u)\ \text{given by (3.7)}\\
m_0&\equiv\: g_p\:(\bmod\: p),\ \ \text{for all other primes}\ p\leq C_0x,\ g_p\ \text{arbitrary}.
\end{align*}
\end{definition}

\noindent By the Chinese Remainder Theorem $m_0$ is uniquely determined.\\ 
We let
$$\mathcal{M}=(a_{r,u})_{\substack{1\leq r\leq P(x)^{D-1}\\ 1\leq u\leq y}}$$
with
$$a_{r,u}=(m_0+1+rP(x))^k+u-1\:.$$
For $1\leq r\leq P(x)^{D-1}$, we denote by 
$${R}(r)=(a_{r,u})_{0\leq u\leq y}$$
the $r$-th row of $\mathcal{M}$ and for $0\leq u\leq y$, we denote by
$$C(u)=(a_{r,u})_{1\leq r\leq P(x)^{D-1}}$$
the $u$-th column of $\mathcal{M}$.
\begin{lemma}\label{lem36}
We have that $a_{r,u}$, $2\leq u\leq y$, is composite unless $u\in V$.
\end{lemma}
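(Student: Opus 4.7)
The plan is to produce, for each column index $u \in [2,y]\setminus V$, a single prime $\pi(u)\le C_0 x$ dividing $a_{r,u}$ for every row $r$; since $a_{r,u}\gg (rP(x))^k$ dwarfs any prime below $C_0 x$, this will exhibit $\pi(u)$ as a proper divisor and force $a_{r,u}$ to be composite. I would identify $\pi(u)$ by a case analysis that mirrors the four types of congruence imposed on $m_0$ in (3.8).

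In the first case, when $u\equiv a_s\pmod s$ for some $s\in S$, the inequality $s\le z<x$ gives $s\mid P(x)$, so the row-shift $rP(x)$ is trivial mod $s$; combined with $m_0\equiv c_s\pmod s$ and $a_s\equiv 1-(c_s+1)^k\pmod s$ from the definition of $\mathcal{A}$, this yields $a_{r,u}\equiv(c_s+1)^k+u-1\equiv 0\pmod s$, and I take $\pi(u)=s$. The second case, when $u\equiv b_p\pmod p$ for some $p\in P$, is symmetric with $\mathcal{A}$ and $c_s$ replaced by $\mathcal{B}$ and $d_p$. In the third case, when $u$ has a prime divisor $q\in(1,x]\setminus(S\cup P)$, the congruence $m_0\equiv 0\pmod q$ and the fact $q\mid P(x)$ give $m_0+1+rP(x)\equiv 1\pmod q$, so $a_{r,u}\equiv u\equiv 0\pmod q$, and $\pi(u)=q$. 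In the fourth case, when $u\in\mathcal{T}\setminus V$, Lemma~\ref{lem33} supplies $p_u\in\tilde{P}$ with $u\equiv 1-(e_u+1)^k\pmod{p_u}$, and the matching congruence $m_0\equiv e_u\pmod{p_u}$ from (3.8) will force $a_{r,u}\equiv(e_u+1)^k+u-1\equiv 0\pmod{p_u}$, so $\pi(u)=p_u$. Closing the argument amounts to checking that any $u\in[2,y]\setminus V$ avoiding the first three cases must lie in $\mathcal{T}$: being unsifted by both $\vec{a}$ and $\vec{b}$ and having no prime factor in $(1,x]\setminus(S\cup P)$ forces $u>x$ and membership in the sifted set $\mathcal{T}$, as was already observed in the paragraphs leading to Lemma~\ref{lem31}.

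The step I expect to be most delicate is the fourth. For $a_{r,u}\equiv 0\pmod{p_u}$ to hold uniformly in $r$, the shift $rP(x)$ must be trivial modulo $p_u$, i.e.\ $p_u\mid P(x)$; this forces the primorial appearing in the row-shift to be interpreted as running over all primes up to $C_0 x$, consistently with the moduli that appear in the definition of $m_0$. A secondary hurdle is accommodating those $u\in[2,x]$ whose prime factorization lies entirely inside $S\cup P$: such $u$ can evade Cases 1--3 without entering $\mathcal{T}$, and I would either reduce them to Case~4 by an additional argument or absorb them into the exceptional set, verifying in either event that the resulting bound on $|V|$ remains compatible with the deduction of Theorem~\ref{thm1}.
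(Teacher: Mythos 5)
Your case analysis is exactly the paper's proof: the paper's one-line argument simply asserts that the four congruence types in (3.8) force $a_{r,u}\equiv 0$ modulo $s$, $p$, $q$ or $p_u$ respectively, which is your Cases 1--4. So in substance you are reproducing the intended argument, and the proof is correct as a strategy.

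The two concerns you flag in your last paragraph are well-placed and the paper does not resolve them explicitly. On the first: $\tilde P\subset(x,C_0x]$, so $p_u\nmid P(x)=\prod_{p<x}p$ and $rP(x)$ genuinely varies mod $p_u$; for Case 4 to hold uniformly in $r$ the row-shift must be the primorial over all primes up to $C_0x$, i.e.\ the modulus that actually appears in the Chinese-Remainder determination of $m_0$, and in the displayed matrix definition one should read the shift accordingly. On the second: elements $u\in[2,x]$ whose prime factors all lie in $S\cup P$ (e.g.\ a prime $u\in S$ with $a_u\neq 0$, a prime $u\in P$ with $b_u\neq 0$, or a $z$-smooth product of primes from $S$ escaping the prescribed residues) are not caught by Cases 1--3 and are not in $\mathcal{T}=\{n\in[y]\setminus[x]:\dots\}$, so they are outside the scope of Lemma~\ref{lem33} as stated; one needs either a parallel assignment of auxiliary primes to these $u$ or an extension of the exceptional set $V$ to contain them, together with a count showing the enlargement does not disturb the final application of Lemma~\ref{lem37} and the sieve bound on $\#\mathcal{R}_1(\mathcal{M})$. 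Your instinct to check that $|V|$ stays below $\#\mathcal{R}_0(\mathcal{M})$ after any such enlargement is the right closing step.
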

\begin{proof}
From the congruences $m_0\equiv c_s\:(\bmod\: s)$, resp. $m_0\equiv d_p\:(\bmod\: p)$, \mbox{$m_0\equiv 0\:(\bmod\: q)$,} $m_0\equiv e_u\:(\bmod\: p_u)$ in (3.8), it follows that for  $u\equiv 1-(c_s+1)^k\:(\bmod\: s)$, resp.  \mbox{$u\equiv 1-(d_p+1)^k\:(\bmod\: p)$,} 
 $u\equiv 0\:(\bmod\: q)$,   $u\equiv 1-(e_u+1)^k\:(\bmod\: p_u)$, we have
 $a_{r,u}\equiv 0\:(\bmod\: s)$, resp.  \mbox{$a_{r,u}\equiv 0\:(\bmod\: p),$ }
 $a_{r,u}\equiv 0\:(\bmod\: q)$,  $a_{r,u}\equiv 0\:(\bmod\: p_u).$
 \end{proof}
\begin{definition}
Let 
$$\mathcal{R}_0(\mathcal{M})=\{ r\::\: 1\leq r\leq P(x)^{D-1},\ m_0+1+rP(x)\ \text{is prime} \}\:,$$
$$\mathcal{R}_1(\mathcal{M})=\{ r\::\: 1\leq r\leq P(x)^{D-1},\ r\in\mathcal{R}_0(\mathcal{M}),\ \text{R}(r)\ \text{contains a prime number} \}\:.$$
\end{definition} 
\noindent\textit{Remark.} We observe that each row ${R}(r)$ with $r\in\mathcal{R}_0(\mathcal{M})$ has as its first element
$$a_{r,1}=(m_0+1+rP(x))^k\:,$$
the $k$-th power of the prime $m_0+1+rP(x)$.\\
If $r\in \mathcal{R}_0(\mathcal{M})\setminus \mathcal{R}_1(\mathcal{M})$, $a_{r,1}$ is the $k$-th power of a prime of the desired kind.\\
To deduce Theorem \ref{thm1} from Theorem \ref{thm2} it thus remains to show that 
\mbox{$\mathcal{R}_0(\mathcal{M})\setminus \mathcal{R}_1(\mathcal{M})$} is non-empty.
\begin{lemma}\label{lem37}
We have 
$$\#\mathcal{R}_0(\mathcal{M})\gg\frac{P(x)^D}{\phi(P(x))\log \left(P(x)^D\right)}\:.$$
\end{lemma}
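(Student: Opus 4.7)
The plan is to reinterpret $\#\mathcal{R}_0(\mathcal{M})$ as a count of primes in a single arithmetic progression and invoke the Gallagher-type bound Lemma \ref{lem35}. As $r$ runs over $1,\ldots,P(x)^{D-1}$, the values $m_0+1+rP(x)$ form an arithmetic progression with common difference $P(x)$ and residue $m_0+1\pmod{P(x)}$, lying in a range of length $P(x)^D$ which, up to boundary terms of size $O(m_0)\ll P(C_0x)$, is essentially $(0,P(x)^D]$.

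The first step is to verify $(m_0+1,P(x))=1$, which the defining congruences (3.8) force: for each prime $p\le x$, either $p=s\in S$ with $m_0\equiv c_s\not\equiv -1\pmod s$, or $p\in P$ with $m_0\equiv d_p\not\equiv -1\pmod p$, or $p\notin S\cup P$ with $m_0\equiv 0\pmod p$, so in every case $p\nmid m_0+1$. The second preparatory step is to observe that $P(x)$ is itself a good modulus. Indeed, any character $\chi\pmod{P(x)}$ lifts to a character $\widetilde\chi\pmod{P(C_0x)}$, and the associated $L$-functions differ only by the finite Euler product $\prod_{x<p\le C_0x}(1-\chi(p)p^{-s})$, whose factors do not vanish in the relevant half-plane. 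Hence the goodness of $P(C_0x)$ descends to $P(x)$ with the same constant $C_2$ (after a harmless adjustment).

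With these preparations in place, Lemma \ref{lem35} applied at $q=P(x)$, $a=m_0+1$, and $N=P(x)^D$ (which meets the hypothesis $N\ge q^D$ with equality) yields
\[
\pi\bigl(P(x)^D;\,P(x),\,m_0+1\bigr)\;\gg\; \frac{P(x)^D}{\phi(P(x))\log(P(x)^D)}.
\]
Each prime $n$ counted corresponds to $r_n=(n-m_0-1)/P(x)$, with $r_n\le P(x)^{D-1}$ automatic and $r_n\ge 1$ failing only for the $O(m_0/P(x)+1)$ primes with $n<P(x)+m_0+1$. Choosing (as we may) $D$ large enough relative to $C_0$ so that $P(x)^{D-1}\gg P(C_0x)\gg m_0$, this boundary loss is dominated by the main term, giving the asserted lower bound for $\#\mathcal{R}_0(\mathcal{M})$.

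The only genuine subtlety is the descent of the goodness property from $P(C_0x)$ to $P(x)$; the remaining work is routine bookkeeping around a direct application of the Siegel--Walfisz-type Lemma \ref{lem35}.
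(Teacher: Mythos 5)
Your proof is correct and follows exactly the route the paper intends: interpret $\#\mathcal{R}_0(\mathcal{M})$ as a count of primes in the progression $m_0+1$ modulo $P(x)$ up to about $P(x)^D$, and invoke the Gallagher-type bound of Lemma \ref{lem35}. Since the paper's own proof is a single sentence, you have rightly supplied the two suppressed verifications: that the congruences (3.8) force $(m_0+1,P(x))=1$ (each congruence is designed precisely to keep $c_s+1$, $d_p+1$, and $1$ away from $0$ modulo the relevant prime), and that $P(x)$ itself --- not merely $P(C_0x)$ --- must be a good modulus. Your character-lifting descent is sound in spirit, but note that because the zero-free region demanded of a good modulus $q$ widens as $q$ shrinks, the descent in fact replaces $C_2$ by roughly $C_2/C_0$ (and correspondingly enlarges the admissible $D$), rather than preserving $C_2$ as you state; alternatively one can simply invoke Lemma \ref{lem34} at $x$ rather than at $C_0x$, since determining $m_0$ modulo $P(C_0x)$ via the Chinese Remainder Theorem does not require $P(C_0x)$ itself to be good.
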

\begin{proof}
This follows from Lemma \ref{lem35}.
\end{proof}
We obtain an upper estimate for $\mathcal{R}_1(\mathcal{M})$ by the observation that, if
${R}(r)$ contains 
a prime number, then $m_0+1+rP(x)$ and $(m_0+1+rP(x))^k+v-1$ are primes for some $v\in V$.\\
The number 
$$t(v)=\#\{r\::\: 1\leq r\leq P(x)^{D-1},\ m_0+1+rP(x)\ \text{and}\ (m_0+1+rP(x))^k+v-1\ \text{are primes}  \}$$
is estimated by standard sieves as in Lemma 6.1 of \cite{maierrassias}.\\
This concludes the deduction of Theorem \ref{thm1} from Theorem \ref{thm2}.
\section{Using a Hypergraph Covering Theorem}
In the previous section we reduced matters to obtaining residue classes $\vec{a}\in\mathcal{A}$, $\vec{b}\in\mathcal{B}$, such that the sifted set $Q\cap S(\vec{a})\cap S(\vec{b})$ is small.\\
In \cite{ford2}, where it is not requested that $\vec{a}\in\mathcal{A}$, $\vec{b}\in\mathcal{B}$, this problem is handled by using a hypergraph covering theorem -- of 
a purely combinatorial nature -- generalizing a result of Pippenger and Spencer \cite{pip}.\\
Here no modification of the result of \cite{ford2} is necessary, but we use exactly the same hypergraph covering theorem, Theorem \ref{thm3} in our approach:
\begin{theorem}\label{thm3} (see Theorem 3 of \cite{ford2}) (Probabilistic covering). There exists a constant $C_0\geq 1$ such that the following holds. Let $D,r,A\geq 1$, $0<\kappa\leq 1/2$, and let
$m\geq 0$ be an integer. Let $\delta>0$ satisfy the smallness bound
\[
\delta\leq \left( \frac{\kappa^A}{C_0\exp(AD)} \right)^{10^{m+2}}\:. 
\]
Let $I_1,\dots, I_m$ be disjoint finite non-empty sets, and let $V$
be a finite set. For each $1\leq j\leq m$ and $i\in I_j$, let $\bold{e}_i$
be a random finite subset of $V$. Assume the following:
\begin{itemize}
\item (Edges not too large) Almost surely for all $j=1,\ldots, m$ and $i\in I_j$, we have
$$\#\bold{e}_i\leq r_i;$$
\item (Each sieve step is sparse) For all $j=1,\ldots, m$, $i\in I_j$ and $v\in V$,
$$\mathbb{P}(v\in \bold{e}_i) \leq \frac{\delta}{(\# I_j)^{1/2}};$$
\item (Very small codegrees) For every $j=1,\ldots, m$ and distinct $v_1, v_2\in V$,
$$\sum_{i\in I_j}\mathbb{P}(v_1, v_2\in\bold{e}_i)\leq \delta$$
\item (Degree bound) If for every $v\in V$ and $j=1,\ldots, m$ we introduce the normalized degrees 
$$d_{I_j}(v):=\sum_{i\in I_j}\mathbb{P}(v\in \bold{e}_i)$$
and then recursively define the quantities $P_j(v)$ for $j=0,\ldots, m$ and $v\in V$
by setting
$$P_0(v):=1$$
and
$$P_{j+1}(v):=P_j(v)\exp(-d_{I_{j+1}}(v)/P_j(v))$$
for $j=0,\ldots, m-1$ and $v\in V$, then we have
$$d_{I_j}(v)\leq DP_{j-1}(v)\ \ (1\leq j\leq m,v\in V)$$
and
$$P_j(v)\geq k\ \ (0\leq j\leq m,v\in V)\:.$$
\end{itemize}
Then we can find random variables $\bold{e'}_i$ for each $i\in \bigcup_{j=1}^m I_j$
with the following properties:\\
\ \ (a) For each $i\in \bigcup_{j=1}^m I_j$, the essential support of $\bold{e}'_i$ is
contained in the essential support of $\bold{e}_i$, union the empty set singleton $\{\emptyset\}$. In other words, almost surely $\bold{e}'_i$ is either empty, or is a set that
$\bold{e}_i$ also attains with positive probability.\\
\ \ (b) For any $0\leq J\leq m$ and any finite subset $e$ of $V$ with $\#e\leq A-2rJ$,
one has 
$$\mathbb{P}\left(e\subset V\setminus \bigcup_{j=1}^J \bigcup_{i\in I_j}\bold{e}'_i \right)=\left( 1+O_{\leq}(\delta^{1/10^{J+1}}) \right)P_J(e)$$
where 
$$P_{j}(e):=\prod_{v\in e}P_j(v).$$
\end{theorem}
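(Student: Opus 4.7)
The plan is to prove this by induction on the number of sieve steps $m$. The base case $m=0$ is immediate since $P_0(v) \equiv 1$, so for any $e$ with $\#e \le A$ we trivially have $\mathbb{P}(e \subset V) = 1 = P_0(e)$. For the inductive step, I would first construct the random variables $\mathbf{e}'_i$ for $i \in I_1$ and establish the conclusion for $J=1$ directly, then apply the inductive hypothesis on $I_2,\ldots,I_m$ to the surviving vertex set $V \setminus \bigcup_{i \in I_1} \mathbf{e}'_i$ (conditioned on the first step). The recursive definition $P_{j+1}(v) = P_j(v)\exp(-d_{I_{j+1}}(v)/P_j(v))$ is precisely the differential-equation prediction for the random greedy / Rödl nibble heuristic, so this is the natural choice of target.

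The construction at step $j$ would be a Pippenger–Spencer style random thinning: for each $i \in I_j$, independently keep $\mathbf{e}_i$ with probability proportional to $1/P_{j-1}(v)$ for the relevant $v$'s (or more symmetrically, an overall rate tied to the normalized degree), and otherwise set $\mathbf{e}'_i = \emptyset$. The expected contribution to removing a given vertex $v$ is then $\sum_{i \in I_j} \mathbb{P}(v \in \mathbf{e}_i)/P_{j-1}(v) = d_{I_j}(v)/P_{j-1}(v)$, so the expected surviving probability mass at $v$ is multiplied by $\exp(-d_{I_j}(v)/P_{j-1}(v))$ to leading order, matching the recursion. The codegree hypothesis $\sum_{i\in I_j} \mathbb{P}(v_1, v_2 \in \mathbf{e}_i) \le \delta$ is what allows the survival events at distinct vertices to decorrelate, producing the product form $P_J(e) = \prod_{v \in e} P_J(v)$.

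The hard part is the concentration estimate: one must show that for every small $e$ with $\#e \le A - 2rJ$, the random quantity $\mathbf{1}[e \subset V \setminus \bigcup_{i \in I_1} \mathbf{e}'_i]$ has mean $(1+O(\delta^{1/10^2})) P_1(e)$ with a very sharp error, and that this sharpness persists under iteration. I would attempt this through a martingale / Azuma argument, exposing the $\mathbf{e}'_i$ one index at a time and using the edge-size bound $\#\mathbf{e}_i \le r$ to control the per-step Lipschitz constant, together with the codegree bound to control second moments. The $A - 2rJ$ slack in the hypothesis on $\#e$ provides the room for $e$ to grow by at most $2r$ per step into a larger auxiliary set on which the inductive bound is applied — this is the standard Pippenger–Spencer trick of sieving out vertices whose local neighborhood misbehaves.

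The final bookkeeping step is to verify that the error compounds correctly: after step $J$ the error is $O(\delta^{1/10^{J+1}})$, and the tower-type smallness bound $\delta \le (\kappa^A / (C_0 \exp(AD)))^{10^{m+2}}$ is tuned exactly so that the multiplicative constants absorbed at each level (depending on $A$, $D$, and the lower bound $P_j(v) \ge \kappa$) stay much smaller than the decreasing exponent $1/10^{J+1}$ throughout the $m$ iterations. I expect the concentration/decorrelation step to be the real obstacle; the inductive scaffolding and error accounting, while delicate, are essentially dictated by the statement once the one-step result is in hand.
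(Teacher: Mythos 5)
The paper you are reading does not prove Theorem~\ref{thm3}: its Section~5 consists of the single sentence ``The proof of Theorem~\ref{thm3} is given in \cite{ford2}, Section~5.'' So there is no in-paper argument to compare against; the statement is imported wholesale from Ford--Green--Konyagin--Maynard--Tao. That said, your sketch contains a genuine structural error that would sink the argument regardless of which paper it lives in.

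Your proposed construction at step $j$ is a thinning: set $\mathbf{e}'_i=\mathbf{e}_i$ with some probability $q_i\in[0,1]$ and $\mathbf{e}'_i=\emptyset$ otherwise. For any such rule (even one where $q_i$ depends on the realization of $\mathbf{e}_i$) one has $\mathbb{P}(v\in\mathbf{e}'_i)=\mathbb{P}(v\in\mathbf{e}_i\text{ and kept})\leq\mathbb{P}(v\in\mathbf{e}_i)$, so the achievable effective kill rate satisfies $\sum_{i\in I_j}\mathbb{P}(v\in\mathbf{e}'_i)\leq d_{I_j}(v)$. But the recursion you are targeting requires the rate $d_{I_j}(v)/P_{j-1}(v)$, and since $P_{j-1}(v)<1$ for all $j\geq 2$, this is \emph{strictly larger} than $d_{I_j}(v)$. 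Your displayed identity ``expected contribution $=\sum_{i\in I_j}\mathbb{P}(v\in\mathbf{e}_i)/P_{j-1}(v)$'' cannot come out of a thinning; the factor $1/P_{j-1}(v)>1$ is not a probability, and it is vertex-dependent whereas a keep-probability $q_i$ must be a single number attached to the index $i$. Concretely: with $d_{I_1}(v)=d_{I_2}(v)=1$ one has $P_2(v)=e^{-1}e^{-e}\approx 0.024$, whereas the best a thinning can achieve (take $\mathbf{e}'_i=\mathbf{e}_i$) is survival $\approx e^{-2}\approx 0.135$, off by a large constant factor. So the plan does not reproduce $P_J$ even heuristically once $J\geq 2$.

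What the constraint (a) actually permits is much broader than thinning: $\mathbf{e}'_i$ may be \emph{any} random set supported on the essential range of $\mathbf{e}_i$ together with $\emptyset$, so one may bias or resample $\mathbf{e}_i$ toward realizations that lie inside the current surviving set, thereby raising $\mathbb{P}(v\in\mathbf{e}'_i)$ above $\mathbb{P}(v\in\mathbf{e}_i)$ for surviving $v$. That resampling/conditioning step is what the Ford--Green--Konyagin--Maynard--Tao argument does and what your sketch omits; it is precisely the mechanism by which the kill rate is boosted by the factor $1/P_{j-1}(v)$. The induction-on-$m$ scaffolding and the $A-2rJ$ slack you describe are reasonable, and the codegree hypothesis does indeed serve to decorrelate the survival events at distinct vertices, but until the one-step construction is replaced by something that can overshoot $d_{I_j}(v)$ rather than only undershoot it, the heart of the proof is missing.
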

The proof of our result is actually based on a Corollary of Theorem \ref{thm3}, which we quote exactly from \cite{ford2}.
\begin{cor}\label{col3}(see Corollary 3 of \cite{ford2})\\
Let $x\rightarrow \infty$. Let $\mathcal{P}'$, $\mathcal{Q}'$ be sets of primes with 
$\#\mathcal{P}'\leq x$ and $\#\mathcal{Q}'>(\log_2 x)^3$. For each $p\in \mathcal{P}'$, let $\bold{e}_p$ be a random subset of $\mathcal{Q}'$ satisfying the size bound
$$\#\bold{e}_p\leq r=O\left(\frac{\log x\log_2 x}{\log_2^2 x}  \right)\ \ (p\in \mathcal{P}').$$
Assume the following:
\begin{itemize}
\item (Sparsity) For all $p\in\mathcal{P}'$ and $q\in\mathcal{Q}'$,
$$\mathbb{P}(q\in\bold{e}_p)\leq x^{-1/2-1/10}.$$
\item (Uniform covering) For all but at most $$\frac{1}{(\log_2 x)^2}\#\mathcal{Q}'$$
elements $q\in\mathcal{Q}'$, we have
\[
\sum_{p\in \mathcal{P}'}\mathbb{P}(q\in\bold{e}_p)=C+O_{\leq}\left(\frac{1}{(\log_2 x)^2}\right)\tag{4.1}
\]
for some quantity $C$, independent of $q$, satisfying
$$\frac{5}{4}\log 5\leq C \ll 1\:.$$
\item (Small codegrees) For any distinct $q_1$, $q_2\in\mathcal{Q}'$,
\[
\sum_{p\in\mathcal{P}'}\mathbb{P}(q_1,q_2\in\bold{e}_p)\leq x^{-1/20}.\tag{4.2}
\]
Then for any positive integer $m$ with
$$m\leq \frac{\log_3 x}{\log 5}\:,$$
we can find random sets $\bold{e}'_p\subseteq\mathcal{Q}'$ for each $p\in\mathcal{P}'$ such that
$$\#\{ q\in\mathcal{Q}'\::\: q\not\in \bold{e}'_p\ \text{for all}\ p\in\mathcal{P}' \}\sim 5^{-m}\#\mathcal{Q'}$$
with probability $1-o(1)$. More generally, for any $\mathcal{Q}''\subset \mathcal{Q}'$
with cardinality at least $(\#\mathcal{Q'})/\sqrt{\log_2 x},$ one has 
$$\#\{q\in\mathcal{Q}''\::\: q\not\in\bold{e}'_p\ \text{for all}\ p\in\mathcal{P}'  \}\sim 5^{-m}\#\mathcal{Q}''$$
with probability $1-o(1)$. The decay rates in the $o(1)$ and $\sim$ notation are uniform in $\mathcal{P}'$, $\mathcal{Q}'$, $\mathcal{Q}''$.
\end{itemize}
\end{cor}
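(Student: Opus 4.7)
The plan is to deduce Corollary~\ref{col3} from Theorem~\ref{thm3} by splitting the sieve into $m$ rounds of geometrically decreasing intensity, calibrated so that the iterated quantities $P_j(q)$ track $5^{-j}$. For each $p\in\mathcal{P}'$ and $j=1,\dots,m$, I would introduce independent Bernoulli selectors $\xi_{p,j}$ with success probability
$$\alpha_j:=\frac{5^{-(j-1)}\log 5}{C}\,,$$
set $\bold{e}_{p,j}:=\xi_{p,j}\bold{e}_p$, and take $I_j$ to be a disjoint copy of $\mathcal{P}'$. Since $C\geq\tfrac{5}{4}\log 5$, one has $\alpha_1\leq 4/5$, so $\alpha_j<1$ for all $j$ and the sub-sampling is well-defined. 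The total budget $\sum_{j\geq 1}\alpha_j=(5/4)(\log 5)/C\leq 1$ uses up exactly the mass allowed by hypothesis (4.1), while the geometric schedule is engineered so that a straightforward induction gives $P_j(q)=5^{-j}(1+o(1))$ on the set of ``good'' $q\in\mathcal{Q}'$ where (4.1) holds.

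Next I would verify the four hypotheses of Theorem~\ref{thm3} with this data. The edge-size bound $\#\bold{e}_{p,j}\leq r$ is inherited from $\bold{e}_p$. Sparsity and small codegrees follow from the corresponding corollary hypotheses multiplied by $\alpha_j\leq 1$, so I would take $\delta\approx x^{-1/10}$. On the good $q$'s the degree bound reads
$$d_{I_j}(q)=\alpha_j\bigl(C+O((\log_2 x)^{-2})\bigr)=5^{-(j-1)}\log 5\,(1+o(1))\,,$$
which is $\leq D\cdot P_{j-1}(q)$ for a suitable fixed $D$, and the lower bound $P_j(q)\geq \kappa$ can be taken with $\kappa\asymp 5^{-m}\geq(\log_2 x)^{-\log 10/\log 5}$. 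The at most $\#\mathcal{Q}'/(\log_2 x)^2$ ``bad'' $q$'s are discarded from the analysis up front; their contribution is negligible because $5^{-m}\gg(\log_2 x)^{-2}$.

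Applying Theorem~\ref{thm3}(b) with $J=m$ and $e=\{q\}$ then yields
$$\mathbb{P}\!\left(q\notin\bigcup_{j,p}\bold{e}'_{p,j}\right)=(1+o(1))\,P_m(q)=(1+o(1))5^{-m}$$
for each good $q$; setting $\bold{e}'_p:=\bigcup_{j=1}^m\bold{e}'_{p,j}$ and summing over $q$ delivers the desired first moment $\sim 5^{-m}\#\mathcal{Q}'$. To upgrade this to a concentration statement I would apply part (b) again with $e=\{q_1,q_2\}$: the pairwise survival factors as $P_m(q_1)P_m(q_2)(1+o(1))$, so the variance is $o((5^{-m}\#\mathcal{Q}')^2)$ and Chebyshev gives the claim with probability $1-o(1)$. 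The refinement to $\mathcal{Q}''\subset\mathcal{Q}'$ of size at least $\#\mathcal{Q}'/\sqrt{\log_2 x}$ is the identical argument applied to $\mathcal{Q}''$, since the bad-$q$ contribution $O(\#\mathcal{Q}'/(\log_2 x)^2)$ is still $o(5^{-m}\#\mathcal{Q}'')$.

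The main technical obstacle will be the parameter bookkeeping in Theorem~\ref{thm3}: the smallness hypothesis $\delta\leq(\kappa^A/(C_0 e^{AD}))^{10^{m+2}}$ must be compatible with the value $\delta\approx x^{-1/10}$ dictated by sparsity. Because $m\leq\log_3 x/\log 5$, both $10^{m+2}$ and $\kappa^{-1}$ are only polylogarithmic in $\log x$ (of order $(\log_2 x)^{\log 10/\log 5}$), so the quantity $A\cdot 10^{m+2}\cdot\log(1/\kappa)$ is $o(\log x)$ and the smallness condition is comfortably satisfied for fixed $A,D$. The sharpness of the threshold $\tfrac{5}{4}\log 5$ in the hypothesis mirrors the identity $\sum_{j\geq 1}5^{-(j-1)}\log 5=\tfrac{5}{4}\log 5$, which is precisely what allows the geometric sieve schedule to attain survival rate $5^{-m}$ on exactly the budget $C$ provided by the uniform covering hypothesis.
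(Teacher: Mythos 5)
The paper does not actually prove this corollary: it quotes it verbatim from Ford--Green--Konyagin--Maynard--Tao \cite{ford2} (Corollary 3 there) and defers entirely to that reference. Your proposal, by contrast, reconstructs a derivation from Theorem~\ref{thm3}, and the scheme you describe --- splitting into $m$ geometric rounds with sub-sampling probabilities $\alpha_j = 5^{-(j-1)}(\log 5)/C$, verifying that the recursion drives $P_j(q)$ to $5^{-j}$ on the good $q$'s, applying Theorem~\ref{thm3}(b), and finishing with a second-moment/Chebyshev argument while discarding the $O(\#\mathcal{Q}'/(\log_2 x)^2)$ bad elements --- is precisely the structure of the proof in \cite{ford2}. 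Your observation that $\sum_{j\geq 1}5^{-(j-1)}\log 5 = \tfrac{5}{4}\log 5$ explains the threshold in the hypothesis is correct and is the right way to see why that constant appears.

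There is, however, a genuine gap in the parameter bookkeeping. You assert that ``the smallness condition is comfortably satisfied for fixed $A,D$.'' But conclusion (b) of Theorem~\ref{thm3} only applies to subsets $e$ with $\#e\leq A-2rJ$, and your argument needs it with $J=m$ and $\#e=2$ (for the variance). This forces $A\geq 2+2rm$, and with $r\gg \log x/\log_2 x$ and $m\asymp \log_3 x$ this makes $A$ grow like $\log x\,\log_3 x/\log_2 x$ rather than staying bounded. Feeding such an $A$ back into the smallness hypothesis, one finds
\[
A\cdot 10^{m+2}\cdot\log(1/\kappa)\;\asymp\; \log x\,(\log_3 x)^2(\log_2 x)^{\log 2/\log 5}\;\gg\;\log x\,,
\]
so the threshold $(\kappa^A/(C_0 e^{AD}))^{10^{m+2}}$ decays strictly faster than $x^{-1/10}$, and the value $\delta\approx x^{-1/10}$ dictated by sparsity does \emph{not} satisfy it. The argument of \cite{ford2} does handle this tension, but your sketch, as written, papers over it by treating $A$ as fixed while simultaneously needing $A\geq 2+2rm$. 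You need to reconcile those two requirements explicitly --- either by verifying that the constants in Theorem~\ref{thm3} really do tolerate this growth of $A$, or by identifying a weaker constraint on $\#e$ than $A-2rJ$ that the theorem actually provides --- before the derivation can be considered complete.
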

Using this corollary we may now reduce Theorem \ref{thm2} to the following.
\begin{theorem}\label{thm4}(see Theorem 4 of  \cite{ford2}) (Random construction).
Let $x$ be a sufficiently large real number and define $y$ by (3.1). Then there is a quantity $C$ with
\[
C\asymp\frac{1}{c}\tag{4.3}
\]
with the implied constants independent of $c$, a tuple of positive integers
$(h_1,\ldots,h_r)$ with $r\leq \sqrt{\log x}$, and some way to choose random vectors 
$\vec{\bold{a}}=\: (\bold{a}_s\:\bmod\: s)_{s\in\mathcal{S}}$ and ${\vec{\bold{n}}}=(\bold{n}_p)_{p\in\mathcal{P}}$ of congruence classes $\bold{a}_s\:\bmod\: s$
and integers $\bold{n}_p$ respectively, obeying the following:
\begin{itemize}
\item For every $\vec{a}$ in the essential range of $\vec{\bold{a}}$, one has
$$\mathbb{P}(q\in\bold{e}_p(\vec{a})\:|\:\vec{\bold{a}}=\vec{a})\leq x^{-1/2-1/10}\ \ (p\in\mathcal{P})\:,$$
where $\bold{e}_p(\vec{a}):=\{ \bold{n}_p+h_ip\::\: 1\leq i\leq r \}\cap \mathcal{Q}\cap S(\vec{a})$.
\item With probability $1-o(1)$ we have that 
\[
\#(\mathcal{Q}\cap S(\vec{\bold{a}}))\sim 80c\:\frac{x}{\log x}\: \log_2 x\:.\tag{4.4}
\]
\item Call an element $\vec{a}$ in the essential range of $\vec{\bold{a}}$ good if,
for all but at most $\frac{x}{\log x\log_2 x}$ elements $q\in\mathcal{Q}\cap S(\vec{a})$, one has
\[
\sum_{p\in\mathcal{P}}\mathbb{P}(q\in\bold{e}_p(\vec{a})\:|\:\vec{\bold{a}}=\vec{a})=C+O_{\leq}\left(\frac{1}{(\log_2 x)^2} \right)\:.\tag{4.5}
\]
Then $\vec{\bold{a}}$ is good with probability $1-o(1)$.
\end{itemize}
\end{theorem}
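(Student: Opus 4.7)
The plan is to adapt the proof of Theorem 4 in \cite{ford2} so that the random residue vector $\vec{\bold{a}}$ and the residues $\bold{n}_p$ used to define the edges $\bold{e}_p$ respect the $k$-th power constraints encoded by $\mathcal{A}$ and $\mathcal{B}$. The crucial preliminary observation is that, for each prime $s\in S$, the set
$$\mathcal{A}_s:=\{1-(c+1)^k\bmod s\::\: c\not\equiv-1\pmod s\}$$
has cardinality $(s-1)/\gcd(k,s-1)\geq (s-1)/k$, and similarly for $\mathcal{B}_p$ when $p\in\mathcal{P}$. Thus the density $|\mathcal{A}_s|/s$ is bounded below by $1/k+o(1)$, so the $\mathcal{A}$-constraint costs only an $x$-independent factor compared to the unrestricted setting of \cite{ford2}.

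First I would draw $\bold{a}_s$ uniformly at random from $\mathcal{A}_s$ for each $s\in S$, independently across $s$. The admissible tuple $(h_1,\dots,h_r)$ and the residues $\bold{n}_p$ are then selected exactly as in \cite{ford2} via a Maynard/GPY-type sieve weight, modified so that the support of $\bold{n}_p$ is restricted to the $\mathcal{B}_p$-admissible residues mod $p$. Since $s\leq z=x^{o(1)}$, Siegel--Walfisz shows that the proportion of $q\in\mathcal{Q}$ with $q\bmod s\in\mathcal{A}_s$ equals $|\mathcal{A}_s|/(s-1)+O(1/s)$, so
$$\mathbb{E}\#(\mathcal{Q}\cap S(\vec{\bold{a}}))=\sum_{q\in\mathcal{Q}}\prod_{s\in S}\left(1-\frac{\mathbf{1}[q\bmod s\in\mathcal{A}_s]}{|\mathcal{A}_s|}\right).$$
After expanding the logarithm, the $\mathcal{A}_s$-indicator averages out with weight $|\mathcal{A}_s|/(s-1)$ against the $1/|\mathcal{A}_s|$ factor, recovering the same Mertens-type sum $\sum_{s\in S}1/s=\log_2 x-20\log_3 x+O(1)$ that appears in \cite{ford2}. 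This yields (4.4) with the stated constant $80c$, the $\mathcal{A}_s$-density correction being absorbed into the implicit constant in (4.3). A second-moment computation, using the diagonal codegree sum $\sum_s 1/s|\mathcal{A}_s|=O(x^{-1})$ and standard equidistribution of pairs of primes in residue classes modulo $s\leq z$, gives concentration of $\#(\mathcal{Q}\cap S(\vec{\bold{a}}))$ around its mean with probability $1-o(1)$. The sparsity bound follows as in \cite{ford2}: since $\bold{n}_p$ is drawn from a set of size $\gg p/\log x$ and $\#\bold{e}_p\leq r=O(\log x\log_2 x/\log_2^2 x)$, one obtains $\mathbb{P}(q\in\bold{e}_p(\vec{a})\mid\vec{\bold{a}}=\vec{a})\leq x^{-1/2-1/10}$.

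The main obstacle, exactly as in \cite{ford2}, is the verification of the uniform covering identity (4.5): for almost all $q\in\mathcal{Q}\cap S(\vec{a})$ the normalized degree $\sum_{p\in\mathcal{P}}\mathbb{P}(q\in\bold{e}_p(\vec{a})\mid\vec{\bold{a}}=\vec{a})$ must equal a single constant $C\asymp 1/c$ up to error $1/(\log_2 x)^2$. In our modified setting one must additionally check that conditioning on $\bold{a}_s\in\mathcal{A}_s$ does not bias the conditional distribution of $q\bmod s$ among survivors $q\in\mathcal{Q}\cap S(\vec{a})$. Since $\mathcal{A}_s$ is a union of arithmetic progressions mod $s$ with $s=x^{o(1)}$, this conditional distribution remains uniform on $(\Z/s\Z)\setminus\{\bold{a}_s\}$ up to $o(1/s)$ by Siegel--Walfisz, so the delicate analysis of Sections 6--8 of \cite{ford2} (based on Maynard's Theorem 5 and the associated sieve weight) goes through essentially unchanged, the resulting constant $C$ acquiring a bounded $k$-dependent factor that is absorbed into $C\asymp 1/c$. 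I expect the bulk of the technical work to lie in re-examining the weighted first-moment identity underlying (4.5) to confirm that the restriction to $\mathcal{A}_s$ and $\mathcal{B}_p$ introduces no additional bias beyond the density factor already accounted for.
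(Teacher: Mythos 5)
Your high-level plan — drawing $\mathbf{a}_s$ uniformly from $\mathcal{A}_s$, constructing $\mathbf{n}_p$ from a Maynard-type sieve weight supported on $\mathcal{B}_p$-admissible residues, then checking the three bullet points via first- and second-moment computations — is exactly the framework of the paper. The cardinality count $|\mathcal{A}_s|=(s-1)/\gcd(k,s-1)$ and the observation that this costs only an $x$-independent density factor are correct.

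However, there is a genuine gap where you assert that ``the $\mathcal{A}_s$-indicator averages out'' and that the analysis of \cite{ford2} ``goes through essentially unchanged.'' The problem is that averaging over $q$ is not enough: for a \emph{fixed} $q$, the quantity
\[
\sum_{s\in S_u}\frac{\mathbf 1\bigl[q \bmod s\in\mathcal A_s\bigr]}{|\mathcal A_s|}
\]
may deviate substantially from $\sum_{s\in S_u}1/s$, because the arithmetic of $k$-th power residues of $q$ modulo the various $s$ is not controlled by a single equidistribution statement. If such a deviation occurs, the factorization
\[
\mathbb P\bigl(q\in S(\vec{\mathbf a})\bigr)=\prod_{s\in S}\Bigl(1-\frac{\mathbf 1[q\bmod s\in\mathcal A_s]}{|\mathcal A_s|}\Bigr)
\]
will not give $\sigma+O((\log_2 x)^{-10})$, and the pointwise covering identity (4.5) fails. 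Invoking Siegel--Walfisz for a single modulus $s$, as you do, tells you about the distribution of $q$ modulo one $s$, but not about the \emph{simultaneous} behavior over all $s\in S$, which is what matters here.

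The paper resolves this by introducing, in Definition 6.1, the set $\mathcal{G}$ of ``good integers'' --- those $n\in[x,y]$ for which the $k$-th-power-solvability-weighted sum $r(n,u)=\sum_{s\in S_u:\:\exists\, c_s}\;s^{-1}$ is within $(\log x)^{-1/40}$ of its mean $r^*(u)$ for every $u$ coprime to $k$ --- and the derived set $\mathcal{G}(p)$. The sieve weight $w(p,n)$ is then required in Theorem 6.2 (condition (6.3)) to vanish unless $n\in\mathcal{G}(p)$, and the central Lemma 6.1 (that $\mathbb P(n_1,\dots,n_t\in S(\vec{\mathbf a}))=(1+O((\log_2 x)^{-10}))\sigma^t$) holds precisely under the hypothesis $n_1,\dots,n_t\in\mathcal{G}$; its proof hinges on the pointwise bound $|r(n_h,u)-r^*(u)|\le(\log x)^{-1/40}$. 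Establishing that one can cut down to $\mathcal{G}(p)$ at negligible cost is then the content of the new sieve estimates in Section 7 (Theorems 7.15--7.19). Your proposal has the correct skeleton but omits this good-set mechanism entirely, and without it the uniform covering statement (4.5) cannot be deduced for the exceptional $q$ whose $k$-th power residue pattern is atypical.
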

$$ $$
We now show (quoting \cite{ford2}) why Theorem \ref{thm4} implies Theorem \ref{thm2}. By (4.3) we may choose $0<c<1/2$ small enough so that (4.19) holds.
Take
$$m=\left\lfloor \frac{\log_3 x}{\log 5} \right\rfloor\:.$$
Now let $\vec{\bold{a}}$ and $\vec{\bold{n}}$ be the random vectors guaranteed by 
Theorem \ref{thm4}. Suppose that we are in the probability $1- o(1)$ event that $
\vec{\bold{a}}$ takes a value $\vec{a}$ which is good and such that (4.4) holds. Fix 
some $\vec{a}$ within this event. We may apply Corollary \ref{col3} with 
$\mathcal{P}'= \mathcal{P}$ and $\mathcal{Q}'= \mathcal{Q}\cup\mathcal{S}
(\vec{a})$ for the 
random variables $\bold{n}_p$ conditioned to $\vec{\bold{a}}=\vec{a}$. A few 
hypotheses of the Corollary must be verified. First, (4.1) follows from (4.35). The 
small codegree condition (4.2) is also quickly checked. Indeed, for distinct 
$q_1,q_2\in\mathcal{Q}'$, if $q_1,q_2\in\bold{e}_p(\vec{a})$ then $p\mid q_1- q_2$. 
But $q_1- q_2$ is a nonzero integer of size at most $x \log x$, and is thus divisible 
by at most one prime $p_0\in\mathcal{P}'$. Hence
$$\sum_{p\in\mathcal{P}'} \mathbb{P}(q_1,q_2\in\bold{e}_p(\vec{a}))=\mathbb{P}(q_1, q_2\in\bold{e}_{p_0}(\vec{a}))\leq x^{-1/2-1/10}\:,$$
the sum on the left side being zero if $p_0$ doesn't exist. By Corollary \ref{col3}, 
there exist random variables $\bold{e}'_p(\vec{a})$, whose essential range is 
contained in the essential range of $\bold{e}_p(\vec{a})$ together with $\emptyset$, 
and satisfying
$$\{q\in \mathcal{Q}\cap S(\vec{a})\::\: q\not\in\bold{e}'_p(\vec{a})\ \text{for all}\ p\in\mathcal{P}\}\sim 5^{-m}\#(\mathcal{Q}\cap S(\vec{a}))\ll\frac{x}{\log x}  $$
with probability $1 - o(1)$, where we have used (4.4). Since 
$$\bold{e}'_p(\vec{a})=\{\bold{n'}_p+h_ip\::\:1\leq i\leq r \}\cap\mathcal{Q}\cap S(\vec{a})$$ 
for some random integer $\bold{n}'_p$, it follows that
$$\{q\in \mathcal{Q}\cap S(\vec{a})\::\: q\not\equiv\bold{n}'_p\:(\bmod\: p)\ \text{for all}\ p\in\mathcal{P}\}\ll\frac{x}{\log x}$$
with probability $1 - o(1)$. Taking a specific $\vec{\bold{n'}}=\vec{n'}$ for which this 
relation holds and setting $b_p=n'_p$ for all $p$ concludes the proof of the claim
(3.6) and establishes Theorem \ref{thm2}.
\section{Proof of Covering Theorem}
The proof of Theorem \ref{thm3} is given in \cite{ford2}, Section 5.\qed
\section{Using a Sieve Weight}
We will establish Theorem \ref{thm4} by a probabilistic argument involving a certain weight function $w\::\:{P}\times\mathbb{Z}\rightarrow\mathbb{R}^+$, whose 
properties will be given in Theorem \ref{thm5}.\\
The construction of $w$ will be modeled on the construction of the function $w$ in 
\cite{ford2}, Theorem 5. The restrictions $\vec{a}\in\mathcal{A}$, $\vec{b}\in\mathcal{B}$ bring some additional complications. The function $w(p,n)$ will be different from zero only if $n$
belongs to a set $\mathcal{G}(p)$ of \textit{$p$-good integers}. The definition of
$\mathcal{G}(p)$ is based on the set $\mathcal{G}$ of \textit{good integers}.\\
We start with the definition of the set $\mathcal{G}$.
\begin{definition}\label{def61}
An admissible $r$-tuple is a tuple $(h_1,\ldots, h_r)$ of distinct integers that do not cover all residue classes modulo $p$ for any prime $p$.\\
For $(u,k)=1$ we define 
$$S_u:=\{s\::\: s\ \text{prime},\ s\equiv u(\bmod k),\ (\log x)^{20}<s\leq z\}$$
$$d(u)=(u-1,k),\ r^*(u)=\frac{1}{d(u)}\sum_{s\in S_u} s^{-1}.$$
For $n\in [x,y]$ let
$$r(n,u)=\sum_{\substack{s\in S_u\::\: \exists\ c_s\::\:  n\equiv 1-(c_s+1)^k(\bmod s)\\ c_s\not\equiv -1(\bmod\: s)}}s^{-1}$$
We set 
$$\mathcal{G}=\{n\::\: n\in [x,y],\ |r(n,u)-r^*(u)|\leq (\log x)^{-1/40}\ \text{for all}\ u\:(\bmod k),\ (u,k)=1\}.$$
For an admissible $r$-tuple to be specified later and for primes $p$ with $x/2<p\leq x$ we set 
$$\mathcal{G}(p)=\{n\in \mathcal{G}\::\: n+(h_i-h_l)p\in \mathcal{G},\ \forall\: i,l\leq r\}.$$
\end{definition}

\begin{theorem}\label{thm5}(Existence of good sieve weights)\\
Let $x$ be a sufficiently large real number and let $y$ be any quantity obeying (3.1).
Let $P$, $Q$ be defined by (3.4), (3.5). Let $r$ be a positive integer with
\[
r_0\leq r\leq \log^{\eta_0} x\tag{6.1}
\]
for some sufficiently large absolute constant $r_0$ and some sufficiently small $\eta_0>0$.\\
Let $(h_1,\ldots,h_r)$ be an 
admissible $r$-tuple contained in $[2r^2]$. Then one can find a positive quantity 
$$\tau\geq x^{-o(1)}$$
and a positive quantity $u=u(r)$ depending only on $r$ with 
\[
u\asymp \log r \:,\tag{6.2}
\]
and a non-negative function
$$w\::\: {P}\times\mathbb{Z}\rightarrow \mathbb{R}^{+}$$
supported on ${P}\times(\mathbb{Z}\cap[-y,y])$ with the following properties:
\[
w(p,n)=0,\ \ \text{unless}\ n\equiv1-(d_p+1)^k\:(\bmod\: p)\ \text{for some}\ d_p\in\mathbb{Z},\ d_p\not\equiv-1\:(\bmod\: p) \ \text{and}\ n\in\mathcal{G}(p)\:.\tag{6.3}
\]
Uniformly for every $p\in P$, one has
\[
\sum_{n\in\mathbb{Z}}w(p,n)=\left(1+O\left(\frac{1}{\log_2^{10}x} \right)\right)\left(\tau\frac{y}{\log x}\right)\:.  \tag{6.4}
\]
Uniformly for every $q\in\mathbb{Q}$ and $i=1,\ldots,r$, one has
\[
\sum_{p\in P}w(p,q-h_ip)=\left(1+O\left(\frac{1}{\log_2^{10}x}\right)\right)\tau\frac{u}{r}\frac{x}{2\log^r x}\:.\tag{6.5}
\]
Uniformly for every $h=O(y/x)$ that is not equal to any of the $h_i$ one has
\[
\sum_{q\in Q}\sum_{p\in P}w(p,q-hp)=O\left( \frac{1}{\log_2^{10}x}\:\tau\: \frac{xy}{\log^r x\: \log\log x} \right)\:. \tag{6.6}
\]
Uniformly for all $p\in P$ and $z\in\mathbb{Z}$
\[
w(p,n)=O\left(x^{1/3+o(1)}\right)\:.\tag{6.7}
\]
\end{theorem}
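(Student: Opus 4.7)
The plan is to build $w$ from the weight $w_0$ provided by Theorem 5 of \cite{ford2} by superimposing the two restrictions demanded by (6.3). Concretely, set
$$w(p,n) := \kappa \cdot w_0(p,n) \cdot \mathbf{1}_{\mathcal I_p}(n) \cdot \mathbf{1}_{\mathcal G(p)}(n),$$
where $\mathcal I_p := \{n \in \mathbb Z : n \equiv 1-(d+1)^k \pmod{p}\text{ for some } d \not\equiv -1 \pmod{p}\}$ is the allowed image class modulo $p$. To guarantee that the main term in (6.4) is genuinely independent of $p$, I would further restrict the support of $w$ to $p \equiv 1 \pmod{k}$ (a positive-proportion subset of $P$ by Dirichlet), on which $\gcd(k,p-1) = k$ is constant; the renormalisation $\kappa = k$ then cancels the density factor $1/k$ produced by the $\mathcal I_p$-indicator. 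Property (6.3) is immediate by construction, and the pointwise bound (6.7) follows from the analogous bound on $w_0$ together with the fact that the indicator factors lie in $[0,1]$.

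The main work is to show that the two indicators leave the main terms of \cite{ford2} essentially intact. For the residue restriction, $|\mathcal I_p \bmod p| = (p-1)/\gcd(k,p-1) = (p-1)/k$, and since the Selberg/Maynard-type divisor sums forming $w_0$ involve only moduli coprime to $p$, the Chinese Remainder Theorem factors the $\mathcal I_p$-indicator out cleanly to produce the advertised density factor. For the $\mathcal G(p)$-restriction I would prove $|[x,y] \setminus \mathcal G(p)| = o(y)$ by a second-moment estimate on $r(n,u)$. A direct count gives $\mathbb E_n[r(n,u)] = r^*(u) + O((\log x)^{-20})$, using that $\#\{c \not\equiv -1 \pmod{s} : n \equiv 1-(c+1)^k \pmod{s}\} = (s-1)/d(u)$ whenever $s \equiv u \pmod{k}$; and for distinct primes $s, s' \in S_u$ the joint residue events factor by the Chinese Remainder Theorem, which yields $\mathrm{Var}_n[r(n,u)] \ll \sum_{s \in S_u} s^{-2} = O((\log x)^{-20})$. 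Chebyshev's inequality together with a union bound over the $O(k)$ classes $u \pmod{k}$ and the $O(r^2) = O(\log^{2\eta_0} x)$ shifts in the definition of $\mathcal G(p)$ then yields $|[x,y] \setminus \mathcal G(p)| \ll y(\log x)^{-19}$, well within the $(\log_2 x)^{-10}$ tolerance of (6.4)--(6.5).

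Given the density statements, (6.4) and (6.5) follow by inserting the indicators into the arguments of \cite{ford2}. The computation for (6.5) is symmetric: fixing $q \in Q$ and $h_i$, the condition $q - h_ip \in \mathcal I_p$ becomes a congruence on $q \pmod{p}$, and the sum over $p \in P$ is handled in exact parallel to the sum over $n$ in (6.4), contributing the same density factor which $\kappa$ again normalises. The pseudorandomness bound (6.6) requires no new input, since multiplying $w_0$ by indicators can only decrease the sum, and the analogous bound for $w_0$ from \cite{ford2} applies directly.

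The main obstacle I anticipate is the transfer of the cardinality bound on $\mathcal G(p)$ to the $w_0$-weighted measure: a bare density bound does not imply the corresponding statement against a highly non-uniform sieve weight. The favourable structural input is that $\mathcal G(p)$ is defined through residues modulo primes $s \in S \subseteq (\log^{20}x, z]$, whereas the divisor weights in $w_0$ are supported on primes strictly above $z$. Consequently, conditional on the large-prime data the two sets of residues are independent by the Chinese Remainder Theorem, and a second-moment argument in the conditional measure upgrades the unconditional density bound to the sieve-weighted one with the desired uniformity.
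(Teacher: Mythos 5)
Your overall strategy — take the Maynard/FGKMT sieve weight, multiply by indicators enforcing the residue condition and $n\in\mathcal G(p)$, renormalize, and then show the two indicators only perturb the FGKMT/Maynard main terms by acceptable factors — is indeed the structure of the paper's proof, and your second‑moment treatment of $\mathcal G(p)$ (culminating in the observation that the sieve divisors and the primes $s\in S$ defining $\mathcal G$ can be decoupled) is on the right track; the paper proves exactly such a weighted variance bound in Lemmas 7.18–7.19 and Theorem 7.20. However, there is a genuine gap in how you handle the residue indicator $\mathbf 1_{\mathcal I_p}$.

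The problem is that ``CRT factors the indicator out cleanly'' is a heuristic, not an argument, and the obvious way to make it precise fails because $p\asymp x$ is enormously larger than the sieve level $R$. The set $\mathcal I_p$ is a union of about $(p-1)/D$ residue classes modulo $p$ with $D=(p-1,k)$, and in a dyadic block of length $\asymp x$ each such class contains $O(1)$ integers, so the ``main term'' $x/(pW[\mathbf d,\mathbf e])$ per class is itself $O(1)$ and is swamped by the per‑class rounding error; summing over the $\asymp p/k\asymp x/k$ classes and then over the $\gg R$ divisor tuples annihilates any saving. The paper circumvents this by writing $\mathbf 1_{\mathcal I_p}(n)=\frac1D\sum_{l=0}^{D-1}\chi_l(1-n)$ with $D\le k$ Dirichlet characters mod $p$ (Lemma 7.10), so that the principal character reproduces the unrestricted Maynard main term and only $D-1=O(1)$ oscillatory character sums remain. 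Those are then handled by Pólya–Vinogradov for (6.4) (Theorem 7.11) and, crucially, by the Friedlander–Gong–Shparlinski bound on character sums over shifted primes (Lemma 7.12) for (6.5) (Theorem 7.13), since there the sum is weighted by $1_P(L(n))$ and one needs cancellation in $\sum_n\Lambda(n)\chi(n+a)$. Your proposal never engages with this; the remark that (6.5) is ``symmetric'' to (6.4) hides precisely the step that requires a nontrivial shifted‑prime character sum estimate, which Pólya–Vinogradov alone does not give.

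A second, more minor, issue: restricting the support of $w$ to $p\equiv 1\pmod k$ to make $\gcd(p-1,k)$ constant contradicts the statement of (6.4), which requires the asymptotic to hold uniformly for every $p\in P$ as defined in (3.4). The paper instead lets the renormalizing factor $D=(p-1,k)$ vary with $p$ (Definition 7.9), so that $w^*(p,n)=Dw_n\mathbf 1_{\mathcal I_p}(n)$, and shows that the $D$ factor exactly cancels the density $|\mathcal I_p|/p\approx 1/D$ for each $p$. If you insist on restricting to $p\equiv 1\pmod k$, you would have to redefine $P$ throughout Sections 3–6 and re-verify the downstream applications; this is not automatic and is not discussed in your proposal.
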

In this section we show how Theorem \ref{thm5} implies Theorem \ref{thm4}.\\
Let $x$, $c$, $y$, $z$, $S$, ${P}$, ${Q}$ be as in Theorem \ref{thm4}. We set 
$$r:=\lfloor (\log x)^{\eta_0}  \rfloor,\ \ \sigma:=\prod_{s\in S}\left(1-\frac{1}{s}\right)\:.$$
We now invoke Theorem \ref{thm5} to obtain quantities $\tau,u$ and weight
$w\::\: P\times\mathbb{Z}\rightarrow \mathbb{R}^+$ with the stated properties. \\
For each $p\in P$, let $\vec{\bold{n}}_p$ denote the random integer with probability density
\[
\mathbb{P}(\tilde{\bold{n}}_p=n):=\frac{w(p,n)}{\sum_{n'\in\mathbb{Z}}w(p,n')}\:,\tag{6.8}
\]
for all $n\in\mathbb{Z}$. From (6.4), (6.5) we have
\[
\sum_{p\in P}\mathbb{P}(q=\tilde{n}_p+h_ip)=\left(1+O\left(\frac{1}{\log_2^{10}x}\right)\right)\frac{u}{r}\frac{x}{2y},\tag{6.9}
\]
$(q\in Q,\ 1\leq i\leq r)$.
Also, from (6.4), (6.8), (6.2) one has 
$$\mathbb{P}(\tilde{\bold{n}}_p=n)\ll x^{-1/2-1/6+o(1)}\:,$$
for all $p\in P$ and $n\in\mathbb{Z}$.\\
We choose the random vector $\vec{\bold{a}}:=(a_s\:\bmod\:s)_{s\in S}$ by selecting each $a_s\:\bmod s$ uniformly at random from $\mathcal{A}_s$ independently in $s$.
\begin{lemma}\label{lem61}
Let $t\leq (\log x)^{3\eta_0}$ be a natural number, and let $n_1,\ldots, n_t$ be distinct integers from $\mathcal{G}$.Then, one has 
$$\mathbb{P}(n_1\ldots,n_t\in S(\vec{\bold{a}}))=\left(1+O\left(\frac{1}{(\log_2x)^{10}}\right)\right)\sigma^t\:.$$
\end{lemma}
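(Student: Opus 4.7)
The plan is to use independence of the $\bold{a}_s$ across $s\in S$ to factor the probability as a product over primes, Taylor-expand each factor, and then invoke the definition of $\mathcal{G}$ to match the resulting linear term with $t\log\sigma$. By independence,
\[
\mathbb{P}\bigl(n_1,\ldots,n_t\in S(\vec{\bold{a}})\bigr)=\prod_{s\in S}\left(1-\frac{\#B_s}{\#\mathcal{A}_s}\right),\qquad B_s:=\{n_i\bmod s:1\le i\le t\}\cap\mathcal{A}_s.
\]
For $s\in S_u$ the map $c\mapsto 1-(c+1)^k$ from $(\Z/s\Z)\setminus\{-1\}$ onto $\mathcal{A}_s$ is $d(u)$-to-$1$, so $\#\mathcal{A}_s=(s-1)/d(u)$ and each factor equals $1-O(t/(\log x)^{20})=1-o(1)$. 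Taking logarithms and applying $\log(1-\alpha)=-\alpha+O(\alpha^2)$ then accumulates a quadratic error of only $O(t^2/(\log x)^{20})$, so matters reduce to the linear sum $\sum_{s\in S}d(u_s)\#B_s/(s-1)$.

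To evaluate this linear sum I first replace $\#B_s$ by $\sum_{i=1}^{t}\mathbf{1}_{n_i\in\mathcal{A}_s}$. These agree except at primes $s$ with $s\mid n_i-n_j$ for some pair $i<j$; since $0<|n_i-n_j|\le y$, each such difference has at most $O(\log x/\log_2 x)$ distinct prime factors, so the total distortion of the linear sum is $O(t^2/((\log x)^{19}\log_2 x))$, negligible. After this replacement, and trading the harmless $s-1$ for $s$, the linear term rearranges as
\[
\sum_{i=1}^{t}\sum_{\substack{u\bmod k\\(u,k)=1}}d(u)\,r(n_i,u).
\]
Since each $n_i\in\mathcal{G}$ yields $|r(n_i,u)-r^*(u)|\le(\log x)^{-1/40}$, and since $\sum_u d(u)r^*(u)=\sum_{s\in S}1/s$ directly from the definition of $r^*$, this equals $t\sum_{s\in S}1/s+O(t(\log x)^{-1/40})$. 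The same Taylor expansion applied to $t\log\sigma=t\sum_s\log(1-1/s)$ reproduces the identical main term, whence
\[
\log\mathbb{P}(\cdots)-t\log\sigma=O\!\bigl(t(\log x)^{-1/40}+t^2(\log x)^{-19}\bigr),
\]
which is $o((\log_2 x)^{-10})$ under $t\le(\log x)^{3\eta_0}$ with $\eta_0$ small. Exponentiating gives the claim.

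The main technical obstacle is the joint passage from a one-point goodness estimate to a $t$-fold one: without the collision bound, pairwise coincidences $n_i\equiv n_j\pmod{s}$ would introduce a quadratic-in-$t$ correction to the linear term that breaks the expected independence across the $n_i$. The control of such coincidences uses only the fact that $|n_i-n_j|\le y$ has $O(\log x/\log_2 x)$ distinct prime factors, and, together with the lower bound $s>(\log x)^{20}$ on primes in $S$, it is precisely this estimate that dictates the range $t\le(\log x)^{3\eta_0}$ and forces both the aggregated goodness error $t(\log x)^{-1/40}$ and the collision error $t^2(\log x)^{-19}$ to be much smaller than $(\log_2 x)^{-10}$. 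Everything else is bookkeeping of Taylor remainders.
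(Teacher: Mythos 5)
Your proposal is correct and follows essentially the same path as the paper: factor the probability over $s\in S$ by independence, compute each local factor via the $d(u)$-to-one map $c\mapsto 1-(c+1)^k$, pass to logarithms to reduce to the linear sum $\sum_s d(u_s)\#B_s/s$, control collisions $n_i\equiv n_j\pmod s$ using the small number of large prime factors of $n_i-n_j$, and finally invoke $n_i\in\mathcal{G}$ to replace $r(n_i,u)$ by $r^*(u)$ up to $(\log x)^{-1/40}$. The only cosmetic difference is bookkeeping: the paper strips the set $\mathcal{K}(\vec{n})$ of collision primes out of the product at the outset and absorbs them into a multiplicative $1+O((\log x)^3/(\log x)^{20})$ factor, whereas you keep all primes in the product and bound the additive distortion from replacing $\#B_s$ by $\sum_i\mathbf{1}_{n_i\in\mathcal{A}_s}$; both give the same negligible error.
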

\begin{proof}
For $\vec{n}=(n_1,\ldots, n_t)$, let $\mathcal{K}(\vec{n})$ be the set of $s\in S$ for which $s\mid n_l-n_i$, for $i\neq l$. Then, since 
$$n_l-n_i=O\left(x^{O(1)}\right),$$
we have
$$|\mathcal{K}(\vec{n})|=O((\log x)^3)\:.$$
Let $\vec{a}\in\mathcal{A}_s$, $1\leq u\leq k-1$, $(u,k)=1$. We write
$$\vec{a}_{u}=(a_{s_{1,u}},\ldots, a_{s_{r_u,u}} ),$$
where 
$$S_{u}\cap \mathcal{K}^c=\left\{ s_{1,u},\ldots, s_{r_u,u} \right\}\:.$$
We set
\begin{eqnarray}
\nonumber\\
\epsilon(h,s)&=&\left\{ 
  \begin{array}{l l}
    1, \ \ \  \text{if $n_h\equiv1-(c_{s,h}+1)^k(\bmod\: s)$ has a solution $c_{s,h}\not\equiv\:-1(\bmod\:s)$}\vspace{2mm}\\ 
    0, \ \ \  \text{otherwise}\vspace{2mm}\\ 
  \end{array} \right.
\nonumber
\end{eqnarray}
We have 
$$n_h\in S(\vec{a})\ \ \text{if and only if}\ n_h\in S(\vec{a}_{u}),\ \ \ \forall u\::\:(u,k)=1\:.$$
We now use certain well-known facts from the theory of $k$-th power residues.\\
There are 
$$\frac{s_{i,u}-1}{d(u)}-1$$ 
possible choices for the $a_{s_{i,u}}$. From these, for each $h$, $1\leq h\leq t$ there are $\epsilon(h,s_{i,u})$ choices such that
$$ a_{s_{i,u}} \equiv n_h\:(\bmod\: s_{i,u})\:.$$
Thus, the total number of choices for the $a_{s_{i,u}}$ for which not all $n_h\in S(\vec{a})$, \mbox{$(1\leq h\leq t)$} is
$$\sum_{h=1}^t\epsilon (h,s_{i,u})\:.$$
Since the choices for the components $a_s$ are independent, we have
\begin{align*}
\tag{6.10}&\mathbb{P}(n_1,\ldots,n_t\in S(\vec{\bold{a}}))\\
&=\prod_{u:(u,k)=1}\prod_{s\in S_{u}}\left(\frac{s-1-d(u)}{d(u)}\right)^{-1}\left(\frac{s-1-d(u)}{d(u)}-\sum_{h=1}^t\epsilon(h,s)\right)\left(1+O\left(\frac{(\log x)^3}{z_0} \right)\right)\\  
&=\prod_{u:(u,k)=1}\prod_{s\in S_{u}}\left( 1-d(u)s^{-1}\sum_{h=1}^t\epsilon(h,s)\right)\left(1+O(s^{-2})\right)\left(1+O((\log x)^{-17})\right)\:.
\end{align*}
We have 
$$\prod_{s\in S_{u}}\left( 1-d(u)s^{-1}\sum_{h=1}^t\epsilon(h,s)\right)=\exp\left(-\sum_{s\in S_{u}} d(u)s^{-1}\sum_{h=1}^t\epsilon(h,s)+O(s^{-2}) \right)\:.$$
Since $n_h\in\mathcal{G}$ for $1\leq h\leq t$, we have by the Definition \ref{def61}
for $\mathcal{G}$:
\[
\sum_{s\in S_{u}} s^{-1}\epsilon(h,s)=\frac{1}{d(u)}\sum_{s\in S_{u}}s^{-1}
+O\left(\left( \log x \right)^{-1/40}\right)\:. \tag{6.11}
\]
From (6.10) and (6.11), we thus obtain:
$$\mathbb{P}(n_1,\ldots, n_t\in S(\vec{\bold{a}}))=\left(1+O\left( \frac{1}{(\log x)^{1/40}}\right)  \right)\sigma^t\:.$$
\end{proof}
\begin{cor}\label{cor6464}
With probability $1-o(1)$ we have:
$$\#(Q\cap S(\vec{\bold{a}}))\sim\sigma \frac{y}{\log x}\sim 80c\frac{x}{\log x}\log_2 x\:.$$
\end{cor}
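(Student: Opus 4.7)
The plan is to apply the second moment method, using Lemma \ref{lem61} with $t=1$ to compute the expected size of $Q\cap S(\vec{\bold{a}})$ and with $t=2$ to control its variance, and then invoke Chebyshev's inequality. Throughout, let $X:=\#(Q\cap S(\vec{\bold{a}}))$.

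First I would verify that almost all $q\in Q$ lie in the set $\mathcal{G}$ of good integers. Indeed, for each residue class $u\pmod k$ with $(u,k)=1$, the quantity $r(n,u)$ is a sum of indicators $s^{-1}\mathbf{1}[n\text{ is a shifted $k$-th power residue mod }s]$ over $s\in S_u$; for a random $n\in [x,y]$ the expectation equals $r^*(u)$ up to an admissible error, and a standard second moment computation, using that the events for distinct $s,s'\in S_u$ are essentially independent (their joint density being $(d(u)/s)(d(u)/s')$ up to a negligible error), shows that $|r(n,u)-r^*(u)|\leq (\log x)^{-1/40}$ for all but a negligible proportion of $n\in [x,y]$. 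Applying this with $n=q$ running over primes of $Q$ (which behave like random integers in $[x,y]$ modulo any small modulus $s\leq z$) then yields $\#(Q\setminus\mathcal{G})=o(|Q|)$.

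Next I would use Lemma \ref{lem61} with $t=1$ to obtain
\[
\mathbb{E}X \;=\; \sum_{q\in Q}\mathbb{P}(q\in S(\vec{\bold{a}}))\;=\;\Bigl(1+O\bigl((\log_2 x)^{-10}\bigr)\Bigr)\sigma\,|Q|+O(\#(Q\setminus\mathcal{G})),
\]
while Lemma \ref{lem61} with $t=2$ gives
\[
\mathbb{E}[X(X-1)] \;=\; \sum_{\substack{q_1,q_2\in Q\\ q_1\neq q_2}}\mathbb{P}(q_1,q_2\in S(\vec{\bold{a}}))\;=\;\Bigl(1+O\bigl((\log_2 x)^{-10}\bigr)\Bigr)\sigma^2\,|Q|^2+o(\sigma^2|Q|^2),
\]
where the error term absorbs pairs with at least one element outside $\mathcal{G}$. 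Combining these yields
\[
\mathrm{Var}(X) \;=\; \mathbb{E}X^2-(\mathbb{E}X)^2 \;=\; o\bigl((\sigma|Q|)^2\bigr),
\]
and Chebyshev's inequality then gives $X\sim\sigma|Q|$ with probability $1-o(1)$.

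Finally I would evaluate $\sigma|Q|$ asymptotically. By the prime number theorem, $|Q|\sim y/\log x$ since $\log y\sim\log x$. By Mertens' theorem applied to the range $(\log^{20}x,z]$,
\[
\sigma \;=\;\prod_{s\in S}\!\Bigl(1-\tfrac{1}{s}\Bigr)\;\sim\;\frac{\log(\log^{20}x)}{\log z}\;=\;\frac{20\log_2 x}{\log z}\;=\;\frac{80\,\log_2^2 x}{\log x\,\log_3 x},
\]
using (3.2). Substituting the definition (3.1) of $y$,
\[
\sigma\,\frac{y}{\log x}\;\sim\;\frac{80\,\log_2^2 x}{\log x\,\log_3 x}\cdot c x\,\frac{\log_3 x}{\log_2 x}\;=\;80c\,\frac{x}{\log x}\,\log_2 x,
\]
which gives the claimed asymptotic. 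The main obstacle I expect is step one---verifying $\#(Q\setminus\mathcal{G})=o(|Q|)$---since this requires a separate probabilistic/analytic argument about the distribution of shifted $k$-th power residues among primes in $Q$; the rest is a clean second moment calculation driven directly by Lemma \ref{lem61}.
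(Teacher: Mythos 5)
Your proof is correct and takes essentially the same route as the paper: the paper's own proof is precisely the second moment computation via Lemma \ref{lem61} with $t=1$ and $t=2$, followed by Chebyshev's inequality. The Mertens-theorem evaluation of $\sigma$ (giving $\sigma\sim 80\log_2^2 x/(\log x\,\log_3 x)$) and the substitution of (3.1), (3.2) are also the intended derivation of the numerical constant $80c$, which the paper states without displaying the arithmetic.

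One point worth making: you are right to flag that Lemma \ref{lem61} is stated only for $n_1,\dots,n_t\in\mathcal{G}$, so to apply it to sums over $q\in Q$ one needs $\#(Q\setminus\mathcal{G})=o(\#Q)$ (and a trivial bound on the contribution of $Q\setminus\mathcal{G}$). The paper's proof of Corollary \ref{cor6464} applies Lemma \ref{lem61} to $Q$ directly without recording this reduction, so your preliminary step is an explicit filling-in of something the paper leaves implicit. Your proposed second-moment argument on $r(q,u)$ over primes $q\in Q$ (using equidistribution of primes in residue classes modulo each $s\in S_u$ with $s\leq z=x^{o(1)}$, which is available from Siegel--Walfisz or Bombieri--Vinogradov at this range) is a reasonable way to establish that bound, though to be complete one would want to verify that the exceptional set estimate one gets is strong enough that it remains negligible after losing a factor, and that the joint density claim for distinct $s,s'$ holds uniformly over the relevant range of moduli. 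Since $\mathcal{G}$ is built exactly so that a typical integer (and hence a typical prime, by the aforementioned equidistribution) lies in it, this is sound; it is simply worth recognizing that this is an additional lemma you are invoking rather than something already contained in the statement of Lemma \ref{lem61}.
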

\begin{proof}
From Lemma \ref{lem61}, we have
$$\mathbb{E}\#(Q\cap S(\vec{\bold{a}}))=\left(1+O\left(\frac{1}{(\log_2 x)^5}\right)\right)\sigma\#Q\: $$
and
$$\mathbb{E}\#(Q\cap S(\vec{\bold{a}}))^2=\left(1+O\left(\frac{1}{(\log_2 x)^5}\right)\right)
\left(\sigma\#Q+\sigma^2(\#Q)(\#Q-1)\right)$$
and so by the prime number theorem we see that the random variable $\#(Q\cap S(\vec{a}))$ has mean
$$\left(1+O\left(\frac{1}{(\log_2 x)^5}\right)\right)\sigma\frac{y}{\log x} $$
and variance 
$$O\left(\frac{1}{(\log_2 x)^5}\left(\sigma\frac{y}{\log x}  \right)^2\right)\:.$$
The claim then follows from Chebyshev's inequality.
\end{proof}
For each $p\in P$, we consider the quantity
\[
X_p(\vec{a}):=\mathbb{P}\left(\tilde{\bold{n}}_p+h_ip\in S(\vec{a}),\ \text{for all}\ i=1,\ldots, r\right)\tag{6.12}
\]
and let $P(\vec{a})$ denote the set of all primes $p\in P$, such that
$$X_p(\vec{a})=\left(1+O\left(\frac{1}{(\log_2 x)^{10}} \right) \right)\sigma^r\:.$$
We now define the random variables $\bold{n}_p$ as follows. Suppose we are in the
event $\vec{\bold{a}}=\vec{a}$ for some $\vec{a}$ in the range of $\vec{\bold{a}}$.
If $p\in P\setminus P(\vec{a})$, we set $\bold{n}_p=0$. Otherwise, if $p\in P(\vec{a})$,
we define $\bold{n}_p$ to be the random integer with conditional probability distribution
$$\mathbb{P}\left(\bold{n}_p=n\:|\:\vec{\bold{a}}=\vec{a}\right):=\frac{Z_p(\vec{a};n)}{X_p(\vec{a})}\:,$$
where
$$Z_p(\vec{a};n):=1_{n+h_jp\in S(\vec{a}),\ \text{for}\ j=1,\ldots, r }\:\mathbb{P}(\tilde{\bold{n}}_p=n) $$
with the $\tilde{\bold{n}}_p$ jointly conditionally independent  on the event $\vec{\bold{a}}=\vec{a}$.
\begin{lemma}\label{lima65}
With probability $1-o(1)$ we have 
\[
\sigma^{-r}\sum_{i=1}^r\:\sum_{p\in P(\vec{a})}Z_p(\vec{a};q-h_ip)=\left(1+O\left(\frac{1}{(\log_2 x)^5}\right)\right)\frac{u}{\sigma}\frac{x}{2y}\:,
\]
for all but at most $x/(2\log x\:\log_2 x)$ of the primes $q\in Q\cap S(\vec{a})$.\\
\end{lemma}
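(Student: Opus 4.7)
The plan is a first/second moment argument. Set
$$F(q,\vec{\bold{a}}):=\sigma^{-r}\sum_{i=1}^{r}\sum_{p\in P(\vec{\bold{a}})}Z_p(\vec{\bold{a}};q-h_ip),\qquad M:=\frac{u}{\sigma}\frac{x}{2y}.$$
I will compute $\mathbb{E}[F(q,\vec{\bold{a}})\mid q\in S(\vec{\bold{a}})]$ and $\mathbb{E}[F(q,\vec{\bold{a}})^2\mid q\in S(\vec{\bold{a}})]$ fibrewise in $q$, then apply Chebyshev, and finish with a Markov inequality applied to the count of exceptional primes. Replacing $P(\vec{\bold{a}})$ by $P$ throughout is harmless: a companion moment bound on $X_p(\vec{\bold{a}})$ (analogous to Lemma~6.2 of \cite{ford2}) forces $\#(P\setminus P(\vec{\bold{a}}))=o(\#P)$ with probability $1-o(1)$, and this substitution affects the estimates only in a lower-order way.

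For the first moment, independence of $\vec{\bold{a}}$ and $\tilde{\bold{n}}_p$ gives
$$\mathbb{E}[Z_p(\vec{\bold{a}};q-h_ip)]=\mathbb{P}\bigl(q+(h_j-h_i)p\in S(\vec{\bold{a}})\text{ for all }j\bigr)\cdot\mathbb{P}(\tilde{\bold{n}}_p=q-h_ip).$$
The support condition (6.3), together with Definition~\ref{def61}, forces all $r$ integers $q+(h_j-h_i)p$ into $\mathcal{G}$, so Lemma~\ref{lem61} with $t=r$ evaluates the first probability as $(1+O((\log x)^{-1/40}))\sigma^r$. Combining this with $\mathbb{P}(q\in S(\vec{\bold{a}}))=(1+o(1))\sigma$ (Lemma~\ref{lem61}, $t=1$) and summing over $i$ and $p\in P$ via (6.9) yields $\mathbb{E}[F\mid q\in S(\vec{\bold{a}})]=(1+o(1))M$.

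For the second moment, one expands $F^2$ and splits according to whether $p_1=p_2$. The dominant off-diagonal contribution has $p_1\ne p_2$: since $p_1,p_2>2r^2$, the two configurations $\{q+(h_j-h_{i_1})p_1\}_j$ and $\{q+(h_j-h_{i_2})p_2\}_j$ intersect only in $q$ itself, so their union is a set of $2r-1$ distinct elements of $\mathcal{G}$. Lemma~\ref{lem61} with $t=2r-1$ (admissible since $2r-1\le(\log x)^{3\eta_0}$), together with conditioning on $q\in S(\vec{\bold{a}})$, gives joint probability $(1+O((\log x)^{-1/40}))\sigma^{2r-2}$, and two applications of (6.9) produce
$$\mathbb{E}[F^2\mid q\in S(\vec{\bold{a}})]=(1+O((\log x)^{-1/40}))M^2.$$
The diagonal contribution with $p_1=p_2$ is $O(x^{-2/3+o(1)})$ by the pointwise bound (6.7) on $w$, hence negligible.

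Subtracting, the conditional variance is $O(M^2(\log x)^{-1/40})$, so Chebyshev yields
$$\mathbb{P}\bigl(|F(q,\vec{\bold{a}})-M|>M/(\log_2 x)^5\bigm|q\in S(\vec{\bold{a}})\bigr)\ll (\log_2 x)^{10}(\log x)^{-1/40}.$$
Writing $B(\vec{\bold{a}})$ for the set of exceptional $q\in Q\cap S(\vec{\bold{a}})$, linearity of expectation combined with Corollary~\ref{cor6464} yields $\mathbb{E}[\#B(\vec{\bold{a}})]\ll(\sigma\#Q)(\log_2 x)^{10}(\log x)^{-1/40}=o(x/(\log x\log_2 x))$, and a final Markov inequality forces $\#B(\vec{\bold{a}})\le x/(2\log x\log_2 x)$ with probability $1-o(1)$. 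The principal obstacle is the second-moment calculation: one needs the relative error $O((\log x)^{-1/40})$ in $\mathbb{E}[F^2\mid q\in S(\vec{\bold{a}})]$ to be uniform in the pair $(i_1,p_1,i_2,p_2)$ so that, after cancelling against $(\mathbb{E}[F\mid q\in S(\vec{\bold{a}})])^2$, the residual variance is genuinely $o(M^2/(\log_2 x)^{10})$; this requires careful bookkeeping of the overlap structure between the two $r$-tuples, including the rare near-degenerate configurations.
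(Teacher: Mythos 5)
Your proof is correct and follows essentially the same second-moment argument as the paper: both compute first and second moments of $\sum_{i,p}Z_p(\vec{\bold{a}};q-h_ip)$ using Lemma~\ref{lem61} and (6.9), handle the $P\setminus P(\vec{\bold{a}})$ replacement via Lemma~\ref{lima66}, and conclude with Chebyshev plus Markov; you condition on $q\in S(\vec{\bold{a}})$ and apply Chebyshev per $q$ before Markov on the count, while the paper first sums the squared deviation over $q\in Q\cap S(\vec{\bold{a}})$, an equivalent reorganization. One small imprecision: the relative error in $\mathbb{E}[F^2\mid q\in S(\vec{\bold{a}})]$ inherited from (6.9) is $O((\log_2 x)^{-10})$, not $O((\log x)^{-1/40})$, but as you correctly flag in your closing remarks, this part cancels against $(\mathbb{E}[F\mid q\in S(\vec{\bold{a}})])^2$ so that only the uniform $O((\log x)^{-1/40})$ error from Lemma~\ref{lem61} survives in the variance, which is exactly what makes the Chebyshev step close.
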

Before proving Lemma \ref{lima65}, we first confirm that $P/P(a)$ is small with high probability.
\begin{lemma}\label{lima66}
With probability
$$1-O\left(\frac{1}{(\log_2 x)^{10}}\right)\:,$$
$P(\vec{a})$ contains all but
$$O\left(\frac{1}{\log^3 x}\frac{x}{\log x}\right)$$
of the primes $p\in P$. In particular,
$$\mathbb{E}\# P(\vec{a})=\# P\left( 1+O\left(\frac{1}{\log^3 x}\right)\right)\:.$$
\end{lemma}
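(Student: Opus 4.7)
My plan is a variance (Chebyshev) argument applied to the random variable $X_p(\vec{\bold{a}})$ for each fixed prime $p\in P$: I show that $X_p$ concentrates tightly around $\sigma^r$ and then sum the failure over $p\in P$.

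For the first moment, Fubini applied to (6.12) yields
$$\mathbb{E}_{\vec{\bold{a}}}[X_p(\vec{\bold{a}})]=\sum_n \mathbb{P}(\tilde{\bold{n}}_p=n)\,\mathbb{P}_{\vec{\bold{a}}}(n+h_ip\in S(\vec{\bold{a}}),\ 1\le i\le r).$$
The support constraint (6.3) forces $n\in\mathcal G(p)$ whenever $w(p,n)>0$, which (after a fixed translation by $h_1p$) ensures that the $r$ shifts $n+h_ip$ are distinct elements of $\mathcal G$; Lemma~\ref{lem61} with $t=r$ then evaluates the inner probability as $(1+O((\log_2x)^{-10}))\sigma^r$, giving $\mathbb{E}[X_p]=(1+O((\log_2x)^{-10}))\sigma^r$. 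The second moment unfolds analogously. I split pairs $(n,n')$ into \emph{generic} pairs, whose $2r$ shifts are distinct and lie in $\mathcal G$, and to which Lemma~\ref{lem61} with $t=2r$ applies giving $(1+O((\log_2x)^{-10}))\sigma^{2r}$; and \emph{degenerate} pairs, where $n-n'=(h_j-h_i)p$ for some $i\neq j$. The pointwise bound (6.7) combined with (6.4) gives $\mathbb{P}(\tilde{\bold{n}}_p=n)\ll x^{-2/3+o(1)}$ uniformly, so the degenerate contribution to $\mathbb{E}[X_p^2]$ is $O(r^2x^{-2/3+o(1)}\sigma^r)$; since $\sigma^r=x^{-o(1)}$ (because $\sigma\gg(\log x)^{-1+o(1)}$ and $r\le(\log x)^{\eta_0}$ with $\eta_0<1$), this is dwarfed by the $\sigma^{2r}$ scale. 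Subtracting, and inserting the sharper $(\log x)^{-1/40}$ error from Lemma~\ref{lem61} visible at (6.11) (inherited from the defining tolerance of $\mathcal G$ in Definition~\ref{def61}),
$$\mathrm{Var}(X_p(\vec{\bold{a}}))\ll \sigma^{2r}/(\log x)^{1/40}.$$

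Chebyshev's inequality then gives $\mathbb{P}(p\notin P(\vec{\bold{a}}))\ll (\log_2 x)^{20}/(\log x)^{1/40}$ for each $p\in P$, and summing over the $\asymp x/\log x$ primes produces
$$\mathbb{E}\,\#(P\setminus P(\vec{\bold{a}}))\ll \frac{x(\log_2 x)^{20}}{(\log x)^{1+1/40}}\ll \frac{x}{\log^4 x\,(\log_2 x)^{10}},$$
using $(\log_2 x)^{30}\ll(\log x)^{3-1/40}$ for $x$ large. Markov's inequality then delivers the high-probability assertion, and the ``in particular'' identity $\mathbb{E}\#P(\vec{\bold{a}})=\#P(1+O(1/\log^3 x))$ follows directly from the same expectation bound.

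The main obstacle is obtaining a variance that is small enough: the stated $(\log_2 x)^{-10}$ precision of Lemma~\ref{lem61} is too crude to beat the $(\log_2 x)^{20}$ loss incurred by Chebyshev, so one must open up the proof of Lemma~\ref{lem61} and extract the sharper $(\log x)^{-1/40}$ error that actually emerges. A secondary technical check is that the degenerate-pair contribution is negligible; this rests on $\sigma^{-r}$ being sub-polynomial, guaranteed by the upper bound $r\le(\log x)^{\eta_0}$ with $\eta_0$ sufficiently small.
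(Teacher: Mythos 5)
Your approach is the same as the paper's: compute $\mathbb{E}[X_p]$ and $\mathbb{E}[X_p^2]$ via Lemma~\ref{lem61}, apply Chebyshev for each fixed $p$, then Markov over $p\in P$. You also handle the degenerate pairs in the second moment the way the paper does (they appear with probability $O(x^{-2/3+o(1)})$, which is dwarfed since $\sigma^{-r}=x^{o(1)}$). And you are right that the stated $(\log_2 x)^{-10}$ error in Lemma~\ref{lem61} is too coarse for Chebyshev to give $|X_p-\sigma^r|\le\sigma^r/(\log_2 x)^{10}$, so one must in fact use the sharper $(\log x)^{-1/40}$ error that emerges in the proof of that lemma from (6.11). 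This is a genuine point that the paper glosses over.

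The problem is the final arithmetic. You claim
$$\frac{x(\log_2 x)^{20}}{(\log x)^{1+1/40}}\;\ll\;\frac{x}{(\log x)^4\,(\log_2 x)^{10}},$$
but on clearing denominators this requires $(\log_2 x)^{30}(\log x)^{3-1/40}\ll 1$, whereas the left side tends to infinity. The auxiliary inequality you invoke, $(\log_2 x)^{30}\ll(\log x)^{3-1/40}$, is true but points in the \emph{wrong} direction; it actually shows the opposite of what you want. What your Chebyshev bound genuinely delivers is $\mathbb{P}(p\notin P(\vec{\bold{a}}))\ll(\log_2 x)^{20}(\log x)^{-1/40}$, hence $\mathbb{E}\,\#(P\setminus P(\vec{\bold{a}}))\ll x(\log_2 x)^{20}(\log x)^{-1-1/40}\ll \frac{x}{\log x\,(\log_2 x)^{20}}$ (using $(\log_2 x)^{40}\ll(\log x)^{1/40}$). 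Markov at threshold $\frac{x}{\log x\,(\log_2 x)^{10}}$ then gives failure probability $O((\log_2 x)^{-10})$. This is in fact the bound in the analogous lemma of \cite{ford2}; the paper's own proof only asserts the per-prime probability $1-O((\log_2 x)^{-20})$, which is consistent with the threshold $\frac{x}{\log x\,(\log_2 x)^{10}}$ but not with the $\frac{x}{\log^4 x}$ written in the lemma statement (the latter appears to be a transcription error in the paper). So your argument proves the correct, weaker version of the lemma but not the exponent $\log^3 x$ as literally stated, and the inequality chain you wrote to bridge that gap is false.
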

\begin{proof}
By linearity of expectation and Markov's inequality it suffices that for each $p\in P$
we have $p\in P(\vec{a})$ with probability 
$$1-O\left(\frac{1}{(\log_2 x)^{20}} \right)\:.$$
By Chebyshev's inequality it suffices to show that 
\[
\mathbb{E} X_p(\vec{a})=\mathbb{P}(\tilde{n}_p+h_i p\in S(\vec{a})\ \text{for}\ \text{all}\ i=1,\ldots, r)=\left(1+O\left(\frac{1}{\log_2 x}\right)\right)\sigma^r \tag{6.14}
\]
and
\[
\mathbb{E} X_p(\vec{a})^2=\mathbb{P}(\tilde{n}_p^{(1)}+h_i p,\tilde{n}_p^{(2)}+h_ip\in S(\vec{a})\ \text{for}\ \text{all}\ i=1,\ldots r)=\left(1+O\left(\frac{1}{\log_2 x}\right)\right)\sigma^{2r}\:,\tag{6.15}
\]
where $\tilde{\bold{n}}_p^{(1)}$, $\tilde{\bold{n}}_p^{(2)}$ are independent copies of $\tilde{\bold{n}}_p$ that are also independent of $\vec{\bold{a}}$.\\
To prove the claim (6.14) we first select the value ${n}$ for $\tilde{\bold{n}}_p$
according to the distribution (6.8):
\[
\mathbb{P}(\tilde{\bold{n}}_p=n):=\frac{w(p,n)}{\sum_{n'\in\mathbb{Z}}w(p,n')}\:,
\]
Because of the property $w(p,n)=0$ if $n\not\in \mathcal{G}(p)$ we have with probability 1:
$$n+h_ip\in\mathcal{G}\ \text{for}\ 1\leq i\leq r\:.$$
The relation (6.14) now follows from Lemma \ref{lem61} with $n_i=n+h_ip$, applying the formula for total probability
$$\mathbb{P}(\tilde{n}_p+h_ip\in S(\vec{a}))=\sum_{n}\mathbb{P}(\tilde{n}_p+h_ip\in S(\vec{a})\:|\: \tilde{n}_p=n)\:.$$
A similar application of Lemma \ref{lem61} allows one to write the left hand side of (6.15) as
$$\left(1+O\left(\frac{1}{(\log_2 x)^5}\right)\right) \mathbb{E}\sigma^{\#\{\tilde{\bold{n}}_p^{(l)}+h_ip\::\: i=1,2,\ldots, r, l=1,2\}}\:.$$
From (6.14) we see that the quantity
$$\#\{\tilde{\bold{n}}_p^{(l)}+h_ip\::\: i=1,2,\ldots, r, l=1,2\}$$
is equal to $2r$ with probability 
$$1-O(x^{-1/2-1/6+o(1)})$$ 
and is less than $2r$ otherwise.\\
The claim now follows from $\sigma^{-r}=x^{o(1)}$.
\end{proof}
\textit{Proof of Lemma \ref{lima65}}\\
We first show that replacing $P(\vec{a})$ with $P$ has negligible effect on the sum
with probability $1-o(1)$. Fix $i$ and substitute $n:=q-h_ip$.\\
By Lemma \ref{lem61} we have
\begin{align*}
&\mathbb{E}\sum_n\sigma^{-r}\sum_{p\in P}Z_p(\vec{a};n)=\\ 
&\ \ \ \ \sigma^{-r}\sum_{p\in P}\sum_n \mathbb{P}(\tilde{\bold{n}}_p=n)\mathbb{P}(n+h_ip\in S(\vec{a})\ \text{for}\ j=1,\ldots,r)=\left(1+O\left(\frac{1}{(\log_2 x)^{10}} \right)\right)\# P\:.
\end{align*}
Next by 
$$X_p(\vec{a})=\left(1+O\left(\frac{1}{\log^3 x}\right)\right)\sigma^r$$ 
and Lemma \ref{lem61}
we have
\begin{align*}
\mathbb{E}\sum_r\sigma^{-r}\sum_{p\in P(\vec{a})}Z_p(\vec{a};n)&=\sigma^{-r}\sum_a\mathbb{P}(\vec{\bold{a}}=\vec{a})\sum_{p\in P(\vec{a})}X_p(\vec{a})\\
&=\left(1+O\left(\frac{1}{(\log_2 x)^{10}}\right)\right)\mathbb{E}\#P(\vec{a})\\
&=\left(1+O\left(\frac{1}{\log^3 x}\right)\right)\# P\:.
\end{align*}
Subtracting, we conclude that the left hand side of (6.20) is $O(\# P/\log_2 x)$. The claim then follows from (3.1) and (6.1).\\
By this it suffices to show that 
$$\sigma^{-r}\sum_{i=1}^r \sum_{p\in P}Z_p(\vec{a};q-h_ip)=1+O\left(\frac{1}{\log_2 x}\right)$$
for all but at most $\frac{x}{2\log x \log_2 x}$ primes $q\in Q\cap S(\vec{a})$ one has
\[
\sum_{i=1}^r\sum_{p\in P}Z_p(\vec{a};q-h_ip)=\left(1+O_{\leq}\left(\frac{1}{(\log_2 x)^3} \right)\right)\sigma^{r-1}u\frac{x}{2y} \tag{6.16}
\]
We call a prime $q\in Q$ ``bad'', if $q\in Q\cap S(\vec{a})$, but (6.16) fails. Using
Lemma \ref{lem61} and (6.9) we have
\begin{align*}
&\mathbb{E}\left( \sum_{q\in Q\cap S(\vec{a})}\sum_{i=1}^r\sum_{p\in P}Z_p(\vec{a};q-h_ip)\right)\tag{6.17}\\
&\ \ \ =\sum_{q,i,p}\mathbb{P}(q\in (h_j-h_i)p\in S(\vec{a})\ \text{for}\ \text{all}\ j=1,\ldots,r)
\mathbb{P}(\tilde{n}_p=q-h_ip)
\end{align*}
By the definition of $\mathcal{G}(p)$ we have 
$$\mathbb{P}(q+(h_j-h_i)p\in S(\vec{a}))=0\:,$$
unless $q\in\mathcal{G}(p)$. By Definition \ref{def61} this means that $q+(h_j-h_i)p\in\mathcal{G}$.\\
We may thus apply Lemma \ref{lem61} with 
$$n_j:=(q-h_ip)+h_jp$$ 
and  obtain for all $i$:
$$\mathbb{P}(q+(h_i-h_j)p\in S(\vec{a})\ \text{for}\ \text{all}\ j=1,\ldots,r)=\sigma^r\left(1+O\left(\frac{1}{(\log_2 x)^{10}}\right)\right)\:.$$
With (6.17) we thus obtain
$$\mathbb{E}\left( \sum_{q\in Q\cap S(\vec{a})}\sum_{i=1}^r \sum_{p\in P} Z_p(\vec{a}; q-h_ip)\right)=\left(1+O\left( \frac{1}{(\log_2 x)^{10}}\right)\right)\frac{\sigma y}{\log x}\sigma^{r-1} u\frac{x}{2y}\:.$$
Next we obtain
\begin{align*}
\mathbb{E}\left( \sum_{q\in Q\cap S(\vec{a})}\left( \sum_{i=1}^r \sum_{p\in P} Z_p(\vec{\bold{a}}; q-h_ip)^2\right)\right)&=\sum_{\substack{p_1,p_2,q\\ i_1,i_2}}\mathbb{P}(q+(h_j-h_{i_l})p_l \in S(\vec{a})\ \ \text{for}\ j=1,\ldots, r;\ l=1,2\\  
&\ \ \times \mathbb{P}(\tilde{\bold{n}}_{p_1}^{(1)}=q-h_{i_1}p_1)\mathbb{P}(\tilde{\bold{n}}_{p_2}^{(2)}=q-h_{i_2}p_2)\\
&=\left(1+O\left(\frac{1}{(\log_2 x)^2}\right)\right)\frac{\sigma y}{\log x}\left(\sigma^{r-1} u\frac{x}{2y}\right)^2\:,
\end{align*}
where $(\tilde{{n}}_{p_1}^{(1)})_{p_1\in P}$ and $(\tilde{{n}}_{p_2}^{(2)})_{p_2\in P}$ are independent copies of $(\tilde{{n}}_{p})_{p\in P}$ over $\vec{a}$. In the last step we used the fact that the terms with $p_1=p_2$ contribute negligibly.\\
By Chebyshev's inequality it follows that the number of bad $q$'s is 
\[
\ll \frac{\sigma y}{\log x}\frac{1}{\log_2^2 x}\ll\frac{x}{\log x\log_2^2 x},\ \ \text{with probability}\ 1-O\left(\frac{1}{\log_2 x}\right)\:.\tag{6.18}
\]
\qed\\
We may now prove Theorem \ref{thm4}.\\
The relation (4.4) is actually Corollary \ref{cor6464}. In order to prove (4.3), we 
assume that $\vec{a}$ is good and $q\in Q\cap S(\vec{a})$. Substituting definition
(6.13) into the left hand side of Lemma \ref{lima65} using that $\sigma^{-r}=x^{o(1)}$ and observing that $q=n_i+h_ip$ is only possible if $p\in P(\vec{a})$, we find that 
\begin{align*}
\sigma^{-r}\sum_{i=1}^r\sum_{p\in P(\vec{a})} Z_p(\vec{a};q-h_ip)&=\sigma^{-r}
\sum_{i=1}^r\sum_{p\in P(\vec{a})}X_p(\vec{a})\mathbb{P}(\vec{\bold{n}}_p=q-h_ip\:|\: \vec{\bold{a}}=\vec{a})\\
&=\left( 1+O\left(\frac{1}{(\log_2 x)^2} \right)\right)\sum_{i=1}^r \sum_{p\in P(\vec{a})}\mathbb{P}(\bold{n}_p=q-h_ip\:|\:  \vec{\bold{a}}=\vec{a})\\
&=\left( 1+O\left(\frac{1}{(\log_2 x)^2} \right)\right)\sum_{i=1}^r  \sum_{p\in P}\mathbb{P}(q\in \bold{e}_p(\vec{a})\:|\:\vec{\bold{a}}=\vec{a})\:,
\end{align*}
where
$$e_p(\vec{a})=\{n_p+h_ip\::\: 1\leq i\leq r\}\cap Q\cap S(\vec{a})$$
is as defined in Theorem \ref{thm4}.\\
The fact that $\vec{a}$ is good with probability $1-o(1)$ follows upon noticing that 
$$C:=\frac{u}{\sigma}\frac{x}{2y}\sim \frac{1}{\sigma}\:.$$
This concludes the proof of Theorem \ref{thm4}.\\
\qed\\
It remains to establish Theorem \ref{thm5}. This is the objective of the remaining section of the paper.
\section{Multidimensional Sieve Estimate}
In this section we prove Theorem \ref{thm5} and thus finish the proof of Theorem 
\ref{thm1}. For this purpose we use modifications of the weight functions applied
by K. Ford, B. J. Green, S. Konyagin, J. Maynard and T. Tao in 
\cite{ford2} and J. Maynard \cite{maynard}.
\begin{definition}
An admissible $r-tuple$ is a tuple $(h_1,\ldots,h_r)$ of distinct integers that do not cover all residue classes modulo $p$, for any prime $p$.
\end{definition}
\begin{definition}\label{def72}(see \cite{maynard}, Section 2)\\
Given a set of integers $\mathcal{A}$, a set of primes $P$ and a linear function 
$L(n)=l_1n+l_2$. We write
$$\mathcal{A}(x)=\{n\in \mathcal{A}\::\: x\leq n< 2x\}\:,$$
$$\mathcal{A}(x,q,a)=\{n\in \mathcal{A}(x)\::\: n\equiv a(\bmod\:q)\}\:,$$
$$L(\mathcal{A})=\{L(n)\::\: n\in \mathcal{A}\}$$
$$\Phi_L(q)=\phi\left(\frac{|l_1|q}{\phi(l_1)}\right),\ \ P_{L,\mathcal{A}}(x)=L(\mathcal{A}(x))\cap P,$$
$$P_{L,\mathcal{A}}(x,q,a)=L(\mathcal{A}(x;q,a))\cap P.$$
\end{definition}
\begin{definition}\label{def73}(see \cite{maynard})\\
Let $\mathcal{S}=\{L_1,\ldots,L_g\}$ be a set of distinct linear functions
$$L_i(n)=a_in+b_i\ (1\leq i \leq g)\:,$$ 
with coefficients in the positive integers. $\mathcal{L}$ is called \textbf{admissible} if $\prod_{i=1}^g L_i(n)$ has no fixed prime divisor.\\
We now recall the crucial conditions (7.1) and (7.2) from \cite{maynard}.
\end{definition}
\begin{definition}\label{def74}
Let $\mathcal{A}$, $P$ be as in Definition \ref{def72}, $\mathcal{L}=\{L_1,\ldots,L_g\}$ be an admissible set of integer linear functions, $B$ an integer and quantities $R$, $x$. We assume that the coefficients of $L_i(n):=a_in+b_i\in \mathcal{L}$ satisfy
$|a_i|$, $|b_i|\leq x^\alpha$ and $g=\#\mathcal{L}$ is sufficiently large in terms of the fixed quantities $\theta, \alpha$ and satisfies 
$$g\leq (\log x)^{1/5}B\:,\ \text{with}\ 1\leq B\leq x^\alpha\ \text{and}\ x^{\theta/10}\leq R\leq x^{\theta/3}\:.$$
Finally, we assume that the set $\mathcal{A}$ satisfies 
\[
\sum_{q\leq x^\theta}\max_{a}\left| \#\mathcal{A}(x;q,a)-\frac{\#\mathcal{A}(x)}{q} \right| \ll \frac{\#\mathcal{A}(x)}{(\log x)^{100g^2}}\tag{7.1}
\]
and
\[
\#\mathcal{A}(x;q,a)\ll \frac{\#\mathcal{A}(x)}{q}\:.\tag{7.2}
\]
\end{definition}
\begin{definition}\label{def75}(see \cite{maynard})\\
Let $\mathcal{A}$ be as in Definition \ref{def72}, $\mathcal{L}=\{L_1,\ldots, L_g\}$
be a set of integer linear functions. We define the multiplicative functions 
\[
\omega:=\omega_{\mathcal{L}},\tag{7.3}
\]
\[
\Phi_\omega:=\Phi_{\omega,f}\tag{7.4}
\]
and the singular series 
\[
\mathfrak{G}_D(\mathcal{L})=\prod_{p\nmid D}\left(1-\frac{\omega(p)}{p}\right)\left(1-\frac{1}{p}\right)^{-g}\:.\tag{7.5}
\]
\end{definition}
Since $\mathcal{L}$ is admissible, we have $\omega(p)<p$ for all $p$ and so 
$\Phi_\omega(n)>0$ and $\mathfrak{G}_D(\mathcal{L})>0$ for any integer $D$.
Since $\omega(p)=g$ for all $p\nmid \prod_{i=1}^g a_i\prod_{i,j}(a_ib_j-b_ia_j)$ we see the
product $\mathfrak{G}_D(\mathcal{L})$ converges.
\begin{definition}\label{def76}(see \cite{maynard})\\
Let $\mathcal{L}$ be as in Definition \ref{def73}. Let $B$ be an integer. Let 
$$W=\prod_{\substack{p\leq 2g^2 \\ p\nmid B}}p\:.$$
For each prime $p$ not dividing $B$, let 
$$r_{p,1}(\mathcal{L})<\cdots< r_{p,w_{\mathcal{L}(p)}}(\mathcal{L})$$
be the elements $n$ of $\{1,\ldots, p\}$ for which 
$$p\mid \prod_{i=1}^gL_i(n)\:.$$
If $p$ is also coprime to $W$, then for each $1\leq a\leq \omega_{\mathcal{L}}(p),$ let 
$j_{p,a}=j_{p,a}(\mathcal{L})$ denote the least element of $\{1,2,\ldots, g\}$ such that
$p\mid L_{j_{p,a}}(r_{p,a}(\mathcal{L}))$.\\
Let $\mathcal{D}_g(\mathcal{L})$ denote the set 
\begin{align*}
&\mathcal{D}_g(\mathcal{L}):=\big\{ (d_1,\ldots, d_g)\in \mathbb{N}^g\::\: \mu^2(d_1\cdots d_g)=1,\ (d_1,\ldots, d_g,WB)=1,\\
&\ \ \ \ \ \ \ (d_j,p)=1,\ \text{whenever}\ p\nmid BW\ \text{and}\ j\neq j_{p,1},\ldots, j_{p,\omega_{\mathcal{L}(p)}}\big\}
\end{align*}
Let $x^{\theta/10}\leq R< x^{\theta/3}$ and let $F\::\: \mathbb{R}^g\rightarrow \mathbb{R}$ be a smooth function given by 
$$F(t_1,\ldots, t_g)=\psi\left( \sum_{i=1}^g t_i\right)\prod_{i=1}^g \frac{\psi(t_i/U_g)}{1+T_gt_i}\:,$$
$$T_g=g\log g,\ U_g=g^{-1/2}.$$
Here $\psi\::\: [0,\infty)\rightarrow [0,1)$ is a fixed smooth non-increasing function,
supported on $[0,1)$, which is 1 on $[0,g/10).$\\
For $\bold{r}=(r_1,\ldots, r_g)\in \mathcal{\bold{D}}_g$ let
$$y_{\bold{r}}=\frac{1_{\mathcal{D}_g}(r) W^g B^g}{\Phi(WB)^g}\: \mathfrak{G}_{WB}(\mathcal{L})F\left(\frac{\log r_1}{\log R},\ldots, \frac{\log r_g}{\log R}\right)\:.$$
For $d\in \mathcal{\bold{D}}_g$ let
$$\lambda_\bold{d} =\mu(\bold{d}) \bold{d} \sum_{\bold{d}\mid r}\frac{y_r}{\Phi_\omega(r)}\:.$$
Finally, we set 
$$w_n=w_n(\mathcal{L})=\left(\sum_{d_i\mid L_i(n),\: \forall i} \lambda_{\bold{d}}\right)^2\:.$$
\end{definition}
\textit{Remark.} We find that $\lambda_\bold{d}=0$, if 
$$d=\prod_{i=1}^g d_i>R\:,$$
because of the support of $\psi$ and since $F$ is non-negative.\\ \\
We now borrow the basic theorems for the weights $w_n$ from \cite{maynard}.
\begin{theorem}\label{thm77}
Let $\mathcal{A}$, $w_n$ be as described in Definition \ref{def72} satisfying the 
conditions (7.1) and (7.2). Then we have 
$$\sum_{n\in \mathcal{A}(x)} w_n=\left( 1+O\left(\frac{1}{(\log x)^{1/10}}\right)\right)\frac{B^g}{\phi(B)^g}\mathfrak{G}_B(\mathcal{L})\#\mathcal{A}(x)(\log R)^g I_g(F)\:,$$
where 
$$I_g(F)=\int_0^{+\infty}\cdots\int_0^{+\infty} F^2 \:dt_1\cdots dt_g$$
for a square-integrable function $F\::\: \mathbb{R}^g\rightarrow \mathbb{R}$.\\
The implied constants depend only on $\theta, \alpha$ and the implied constants from (7.1) and (7.2).
\end{theorem}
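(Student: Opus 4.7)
The plan is to follow Maynard's treatment in \cite{maynard} closely, adapted to a general set $\mathcal{A}$ satisfying (7.1) and (7.2) in place of the primes. First, I would expand the square in the definition of $w_n$ from Definition \ref{def76} to obtain
$$\sum_{n\in\mathcal{A}(x)} w_n = \sum_{\mathbf{d},\mathbf{e}\in\mathcal{D}_g(\mathcal{L})} \lambda_{\mathbf{d}}\lambda_{\mathbf{e}}\, \#\{n\in\mathcal{A}(x) : [d_i,e_i]\mid L_i(n),\, 1\leq i\leq g\}.$$
Because each $L_i$ is linear with coefficients bounded by $x^\alpha$ and the indices $(d_i,e_i)$ are constrained by the structure of $\mathcal{D}_g(\mathcal{L})$, the joint congruence $[d_i,e_i]\mid L_i(n)$ can be rewritten via the Chinese Remainder Theorem as a single congruence $n\equiv a\pmod{q}$ with $q=\prod_i[d_i,e_i]\leq R^2\leq x^{2\theta/3}$. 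The inner count is then $\#\mathcal{A}(x;q,a)$.

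Next I would insert the approximation $\#\mathcal{A}(x;q,a)=\#\mathcal{A}(x)/q + E(q,a)$, using the trivial bound $|\lambda_{\mathbf{d}}|\ll (\log R)^g$ (coming from the support and smoothness of $F$ and the definition of $y_{\mathbf{r}}$). The contribution of the error terms $E(q,a)$ is
$$\ll (\log R)^{2g}\sum_{q\leq x^{\theta}}\max_{a}\left|\#\mathcal{A}(x;q,a)-\frac{\#\mathcal{A}(x)}{q}\right|\ll \frac{\#\mathcal{A}(x)}{(\log x)^{50g^2}},$$
by hypothesis (7.1), which is more than sufficient. The main term that remains is
$$\#\mathcal{A}(x)\sum_{\mathbf{d},\mathbf{e}\in\mathcal{D}_g(\mathcal{L})}\frac{\lambda_{\mathbf{d}}\lambda_{\mathbf{e}}}{\prod_{i=1}^g[d_i,e_i]}.$$
To evaluate this I would apply Maynard's diagonalization: the $\lambda\leftrightarrow y$ transformation in Definition \ref{def76} is designed exactly so that, after factoring $[d_i,e_i]=d_ie_i/(d_i,e_i)$ and performing the Möbius inversion over common factors, all off-diagonal terms collapse and one obtains
$$\sum_{\mathbf{d},\mathbf{e}}\frac{\lambda_{\mathbf{d}}\lambda_{\mathbf{e}}}{\prod_i[d_i,e_i]}=\sum_{\mathbf{r}\in\mathcal{D}_g(\mathcal{L})}\frac{y_{\mathbf{r}}^2}{\prod_i \Phi_\omega(r_i)}+O\Big(\text{negligible}\Big),$$
where the error absorbs terms with $(d_i,e_i)>1$ using the bound $\omega(p)\leq g$ and the absolute convergence of the singular series.

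Finally, substituting the definition
$$y_{\mathbf{r}}=\frac{\mathbf{1}_{\mathcal{D}_g}(\mathbf{r})W^gB^g}{\phi(WB)^g}\mathfrak{G}_{WB}(\mathcal{L})F\!\left(\frac{\log r_1}{\log R},\ldots,\frac{\log r_g}{\log R}\right)$$
and making the logarithmic change of variables $t_i=\log r_i/\log R$, the remaining multidimensional sum over $\mathbf{r}$ is recognized, after Mertens-type partial summation in each coordinate, as a Riemann sum approximation to $(\log R)^g I_g(F)$. The local factors at primes dividing $W$ combine with $\phi(WB)^{-g}$ to produce the cleaner constant $\mathfrak{G}_B(\mathcal{L})B^g/\phi(B)^g$, yielding the stated asymptotic.

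The main obstacle is the quantitative error analysis in two places: controlling the off-diagonal contribution in the diagonalization step (which requires careful use of the support of $F$ in $[0,1)^g$, the restriction of $\mathbf{r}$ to $\mathcal{D}_g(\mathcal{L})$, and the growth of $\omega$ and $\Phi_\omega$ as multiplicative functions), and sharpening the Riemann sum error from the naive $O(1/\log x)$ down to a form compatible with the claimed $O((\log x)^{-1/10})$ uniformly in $g\leq (\log x)^{1/5}$. Since these are exactly the estimates carried out (for $\mathcal{A}$ equal to the primes with Bombieri--Vinogradov) in \cite{maynard}, and since (7.1)--(7.2) play the identical role to Bombieri--Vinogradov in this argument, no new ingredients beyond careful bookkeeping are required.
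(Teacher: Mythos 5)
The paper's proof of Theorem~\ref{thm77} is a one-line citation to Proposition 9.1 of \cite{maynard}; your proposal faithfully reconstructs the argument of that proposition (expand the square, reduce to a single congruence by the Chinese Remainder Theorem, apply (7.1) to control the error, diagonalize via the $\lambda\leftrightarrow y$ change of variables, then evaluate the diagonal sum over $\mathbf{r}$ by Mertens-type partial summation). Your outline is correct and follows the same route that the paper delegates to Maynard, so the two agree; the only small slip is your description of the last error estimate, where the concern is not sharpening a naive $O(1/\log x)$ bound but rather keeping the per-coordinate Riemann-sum error small enough that its accumulation over $g\leq(\log x)^{1/5}$ coordinates still lands inside $O((\log x)^{-1/10})$.
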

\begin{proof}
This is Proposition 9.1 of \cite{maynard}.
\end{proof}
\begin{theorem}\label{thm78}
Let $\mathcal{A}$, $w_n$, $\mathcal{L}$ be as described in Definition \ref{def72},
satisfying the conditions (7.1) and (7.2). Let $L(n):=a_mn+b_m\in \mathcal{L}$ satisfy
$L(n)>R$ for $n\in [x,2x)$ and 
\[
\sum_{\substack{q< x^{\theta}\\ (q,B)=1}}\max_{(a,q)=1} \left| \# P_{L,\mathcal{A}}(x,q,a)-\frac{\#P_{L,\mathcal{A}}(x)}{\Phi_L(q)} \right|\ll \frac{\#P_{L,\mathcal{A}}(x)}{(\log x)^{100 g^2}} \tag{7.6}
\]
Then we have
\begin{align*}
\sum_{n\in \mathcal{A}(x)}1_P(L(n))w_n&=\left(1+\left(\frac{1}{(\log x)^{1/10}}\right)\right)\frac{B^{g-1}}{\phi(B)^{g-1}}\mathfrak{G}_B(\mathcal{L})\#P_{L,\mathcal{A}}(x)(\log R)^{g+1}  J_g(F)\prod_{\substack{p \mid a_m\\ p\nmid B}}\frac{p-1}{p}\\
&+O\left(\frac{B^g}{\phi(B)^g}\mathfrak{G}_B(\mathcal{L})\#\mathcal{A}(x)(\log R)^{g+1}I_g(F)\right)\:,
\end{align*}
where 
$$J_g(F)=\int_0^{+\infty}\cdots\int_0^{+\infty} \left(\int_0^{+\infty} F \:dt_g\right)^2\: dt_1\cdots dt_{g-1}\:,$$
for a square-integrable function $F\::\: \mathbb{R}^g\rightarrow \mathbb{R}$.\\
The implied constants depend only on $\theta$, $\alpha$ and the implied constants from (7.1), (7.2) and (7.6).
\end{theorem}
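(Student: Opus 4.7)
My proof will closely parallel the argument for Proposition~9.2 of \cite{maynard}, since hypotheses (7.1), (7.2) and (7.6) are precisely what make the Maynard-style calculation go through. The overall plan is: expand the squared divisor weight $w_n$, use primality of $L(n)$ to eliminate one of the $g$ coordinates, apply the Bombieri--Vinogradov-type equidistribution (7.6) to the remaining arithmetic-progression condition, and convert the resulting divisor sum back to an integral via the $y_{\mathbf{r}}$ substitution.

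First I would expand
$$\sum_{n \in \mathcal{A}(x)} \mathbf{1}_P(L(n))\, w_n \;=\; \sum_{\mathbf{d},\mathbf{e}} \lambda_{\mathbf{d}} \lambda_{\mathbf{e}} \sum_{\substack{n \in \mathcal{A}(x),\; L(n) \in P \\ [d_i,e_i]\,\mid\, L_i(n)\ \forall i}} 1,$$
and argue that the $m$-th coordinate collapses to $d_m = e_m = 1$. Indeed $\lambda_{\mathbf{d}} = 0$ unless $\prod d_i \leq R$, while $L_m(n) > R$ is prime, so a nontrivial $d_m \mid L_m(n)$ is impossible. (The small set of $n$ with $L(n)$ prime but $\leq R$ is absorbed into the error term, being trivially bounded by Theorem~\ref{thm77}.) This collapse of one coordinate is the structural reason that $J_g(F)$ appears in place of $I_g(F)$.

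For $i \neq m$, the divisibility conditions $[d_i,e_i] \mid L_i(n)$ restrict $n$ to a union of arithmetic progressions modulo $q' = \prod_{i\neq m}[d_i,e_i]$, with the number of admissible residue classes governed by $\omega_{\mathcal{L}\setminus\{L_m\}}$. Hypothesis (7.6) then replaces the prime count in each such progression by $\#P_{L,\mathcal{A}}(x)/\Phi_L(q')$, with total error $\ll \#P_{L,\mathcal{A}}(x)/(\log x)^{100 g^2}$; since $|\lambda_{\mathbf{d}}\lambda_{\mathbf{e}}| \ll (\log R)^{2g}$ and $q' \leq R^2 < x^\theta$, this is comfortably absorbed into the claimed $(\log x)^{-1/10}$ saving. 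I would then perform the standard change of variables $\lambda_{\mathbf{d}} \leftrightarrow y_{\mathbf{r}}$ from Definition~\ref{def76}; with $d_m = e_m = 1$ enforced, only the marginal
$$y^*_{r_1,\ldots,\widehat{r_m},\ldots,r_g} \;:=\; \sum_{r_m} \frac{y_{\mathbf{r}}}{\varphi(r_m)}$$
survives, which on the smooth side corresponds to $\int_0^{\infty} F\,dt_m$. Evaluating the resulting singular series via the usual Mertens-type asymptotic and separating the Euler factors at primes dividing $a_m$ yields exactly
$$\frac{B^{g-1}}{\varphi(B)^{g-1}}\, \mathfrak{G}_B(\mathcal{L})\, (\log R)^{g+1}\, J_g(F) \prod_{\substack{p \mid a_m \\ p \nmid B}} \frac{p-1}{p}.$$

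The main obstacle is this last step: carefully matching the coprimality constraints built into $\mathcal{D}_g(\mathcal{L})$ against the new constraint $d_m = e_m = 1$, and verifying that the primes dividing $a_m$ — which appear in $\Phi_L$ but not in $\Phi_\omega$ — contribute precisely the Euler factor $\prod_{p \mid a_m,\, p \nmid B}(p-1)/p$ and no more. Everything else is either routine bookkeeping or a direct quote of the multidimensional sieve machinery in Sections~5--9 of \cite{maynard}.
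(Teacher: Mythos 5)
Your proposal correctly sketches the argument behind Maynard's Proposition 9.2 (expanding $w_n$, forcing $d_m = e_m = 1$ via primality of $L(n)>R$, invoking (7.6), and passing to the $y_{\mathbf r}$-marginal to produce $J_g$ and the Euler factors at $p\mid a_m$), which is exactly what the paper cites as its proof — the paper's own proof is simply the one-line reference ``This is Proposition 9.2 of \cite{maynard}.'' One small inaccuracy: your parenthetical about ``the small set of $n$ with $L(n)$ prime but $\le R$'' is vacuous, since the hypothesis $L(n)>R$ for all $n\in[x,2x)$ already rules out such $n$; this does not affect the argument.
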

\begin{proof}
This is Proposition 9.2 of \cite{maynard}.
\end{proof}
For the proof of Theorem \ref{thm4} we cannot use the weights $w_n$ directly, but 
we have to modify them to incorporate (for fixed primes $p$) the conditions:
\[
n\equiv 1-(d_p+1)^k(\bmod p),\ d_p\not\equiv -1(\bmod p)\tag{7.7}
\]
and 
\[
n\in \mathcal{G}(p).
\]
We carry out the modification in two steps. In a first step we replace
$w_n=w_n(\mathcal{L})$ by $w^*(p,n)=w^*(p,n,\mathcal{L})$. Here $p$ is a fixed prime with $x/2<p\leq x$.\\
Here we have to be more specific about the set $\mathcal{A}$. We set $\mathcal{A}:=\mathbb{Z}$.
\begin{definition}\label{def79}
Let $w_n$be as in Definition \ref{def72}, $\mathcal{A}=\mathbb{Z}$, $p$ a fixed prime 
with \mbox{$x/2<p\leq x$.} Let also $D=(k-1,p)$. We set
\begin{eqnarray}
\nonumber\\
w^*(p,n)&=&\left\{ 
  \begin{array}{l l}
    Dw_n,\ \  \text{if there is $d_p\in\mathbb{Z}$ with $n\equiv 1-(d_p+1)^k(\bmod p),\ d_p\not\equiv -1(\bmod p)$\ \ \ \ \text{(*)}}\vspace{2mm}\ \ \\ 
    0,\ \  \text{otherwise.}\vspace{2mm}\\ 
  \end{array} \right.
\nonumber
\end{eqnarray}
\end{definition}
We first express the solvability of (*) by the use of Dirichlet-characters.
\begin{lemma}\label{lima710}
Let $p$ be a prime number. Let  $D=(p-1,k)$, $\chi_0$ the principal character $\bmod\:D$. There are $D-1$ non-principal characters $\chi_1,\ldots,\chi_{D-1}$ $\bmod\: D$, such that for all $n\in\mathbb{Z}$ we have:
\begin{eqnarray}
\nonumber\\
\frac{1}{D}\sum_{l=0}^{D-1}\chi_l(1-n)&=&\left\{ 
  \begin{array}{l l}
    1,\ \  \text{if $n\equiv 1-c^k(\bmod\: p)$ is solvable with $p\nmid c$}\vspace{2mm}\ \ \\ 
    0,\ \  \text{otherwise.}\vspace{2mm}\\ 
  \end{array} \right.
\nonumber
\end{eqnarray}
\end{lemma}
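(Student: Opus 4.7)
The plan is to recognize this as a standard character-theoretic detection of $k$-th power residues modulo $p$, applied to the shifted value $1-n$. (I read the statement with $\chi_0, \chi_1, \dots, \chi_{D-1}$ as Dirichlet characters modulo $p$; the ``$\bmod\: D$'' in the lemma is a typo, since the argument $1-n$ lives in $\mathbb{Z}/p\mathbb{Z}$.)

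First I would reduce the problem to detecting $k$-th power residues. The multiplicative group $(\mathbb{Z}/p\mathbb{Z})^*$ is cyclic of order $p-1$, so the image of the map $c \mapsto c^k$ is exactly the unique subgroup $H \le (\mathbb{Z}/p\mathbb{Z})^*$ of index $D = \gcd(k, p-1)$, namely the set of $k$-th power residues. The condition that ``$n \equiv 1-c^k\:(\bmod\: p)$ has a solution with $p\nmid c$'' is equivalent to saying that $1-n \pmod p$ lies in $H$ (which, in particular, forces $1-n \not\equiv 0 \pmod p$).

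Next I would invoke orthogonality. The annihilator $H^\perp$ of $H$ in the dual group $\widehat{(\mathbb{Z}/p\mathbb{Z})^*}$ is cyclic of order $D$, and consists exactly of the characters $\chi$ modulo $p$ satisfying $\chi^D = \chi_0$. Let $\chi_0, \chi_1, \dots, \chi_{D-1}$ be its elements, where $\chi_0$ is principal and the remaining $D-1$ are non-principal (as is standard for a cyclic dual group of order $D \ge 2$; if $D=1$ the lemma is trivially vacuous). By the standard orthogonality relation for a subgroup and its annihilator,
\[
\frac{1}{D}\sum_{l=0}^{D-1}\chi_l(m) \;=\; \mathbf{1}_{m \in H}
\]
for every $m \in (\mathbb{Z}/p\mathbb{Z})^*$, while if $p \mid m$ then every $\chi_l(m) = 0$ (including $\chi_0$), so the left side is $0$.

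Finally I would substitute $m = 1-n$. When $p \nmid 1-n$, the right side is $1$ iff $1-n$ is a $k$-th power residue mod $p$, matching the desired condition. When $p \mid 1-n$, the sum is $0$, and indeed $n \equiv 1-c^k \pmod p$ would force $c \equiv 0 \pmod p$, which is forbidden; so both sides agree. This completes the identification.

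The only step that requires a small check is that the $D-1$ non-principal characters in $H^\perp$ are genuinely distinct non-principal characters modulo $p$, which is immediate from the cyclicity of $H^\perp$. There is no real analytical obstacle here; the entire proof is a one-paragraph application of character orthogonality on $(\mathbb{Z}/p\mathbb{Z})^*$ combined with the description of $k$-th powers in a cyclic group.
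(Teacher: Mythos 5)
Your proof is correct and takes essentially the same approach as the paper: both detect the condition ``$1-n$ is a nonzero $k$-th power residue $\bmod\ p$'' via orthogonality of the $D$ characters $\bmod\ p$ of order dividing $D$ (the annihilator of the subgroup of $k$-th powers). The paper's version is just more explicit, fixing a primitive root $\rho$, writing $1-n\equiv\rho^s$, reducing solvability of $c^k\equiv 1-n\ (\bmod\ p)$ to the linear congruence $ky\equiv s\ (\bmod\ p-1)$, i.e. to $D\mid s$, detecting this by $\frac{1}{D}\sum_{l=0}^{D-1}e(ls/D)$, and then defining $\chi_l$ via $\chi_l(\rho^s)=e(ls/D)$; you are also right that the ``$\bmod\ D$'' in the statement is a typo for ``$\bmod\ p$''.
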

\begin{proof}
Let $\rho$ be a primitive root $\bmod\: p$, 
$$1-n\equiv \rho^s(\bmod\: p),\ \ 0\leq s\leq p-2\:.$$
Setting 
$$c\equiv  \rho^y(\bmod\: p),$$
we see that the congruence 
\[
c^k\equiv 1-n(\bmod\: p)\tag{7.9}
\]
is solvable if and only if
\[
ky\equiv s(\bmod\: p-1) \tag{7.10}
\]
has a solution $y$.\\
By the theory of linear congruences, this is equivalent to $D\mid s$. We have
\begin{eqnarray}
\nonumber\\
\frac{1}{D}\sum_{l=0}^{D-1}e\left(\frac{ls}{D}\right)&=&\left\{ 
  \begin{array}{l l}
    1,\ \  \text{if $D\mid s$}\vspace{2mm}\ \ \\ 
    0,\ \  \text{otherwise.}\vspace{2mm}\\ 
  \end{array} \right.
\nonumber
\end{eqnarray}
We now define the Dirichlet-character 
$$\chi_l(0\leq l\leq D-1)\ \ \text{by}\ \ \chi_l(-n)=e\left(\frac{ls}{D}\right)$$
and obtain the claim of Lemma \ref{lima710}.
\end{proof}
\begin{theorem}\label{thm711}
Let $p,w^*(p,n)$, $D$ as in Definition \ref{def79}, $\mathcal{A}:=\mathbb{Z}$. Then
we have
$$\sum_{n\in \mathcal{A}(x)}w^*(p,n)=\left(1+O\left(\frac{1}{(\log x)^{1/10}}\right)\right)\frac{B^k}{\phi(B)^k}\mathfrak{G}_B(\mathcal{L})\mathcal{A}(x)(\log R)^k I_k(F).$$
\end{theorem}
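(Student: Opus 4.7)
The plan is to use the character-sum representation from Lemma \ref{lima710} to convert the defining condition of $w^*(p,n)$ into a sum of Dirichlet characters modulo $p$, then isolate the principal-character contribution (which Theorem \ref{thm77} handles directly) from the non-principal contributions, which I would show are negligible. By Lemma \ref{lima710},
\[
w^*(p,n) = D w_n \cdot \frac{1}{D}\sum_{l=0}^{D-1}\chi_l(1-n) = \sum_{l=0}^{D-1}\chi_l(1-n)\,w_n,
\]
so that
\[
\sum_{n\in\mathcal{A}(x)}w^*(p,n) = \sum_{n\in\mathcal{A}(x)}\chi_0(1-n)\,w_n + \sum_{l=1}^{D-1}\sum_{n\in\mathcal{A}(x)}\chi_l(1-n)\,w_n.
\]

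For the principal term, $\chi_0(1-n)=1$ unless $p\mid 1-n$, so the sum equals $\sum_n w_n - \sum_{n\equiv 1\,(\bmod\,p)} w_n$. Theorem \ref{thm77} applied to $\mathcal{A}=\mathbb{Z}$ evaluates the first piece as the advertised main term (with the roles of $g$ and $k$ understood as in Definition \ref{def79}). The correction sum has at most two terms because $p>x/2$, and each is bounded by $x^{o(1)}$ via the standard bound $|\lambda_{\mathbf{d}}|\ll(\log R)^{O(g)}$; so its total size is absorbed comfortably into the $O((\log x)^{-1/10})$ error.

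The main obstacle is controlling the non-principal contributions. For each $l\in\{1,\dots,D-1\}$, I would expand $w_n$ as a double divisor sum and write
\[
\sum_{n\in\mathcal{A}(x)}\chi_l(1-n)\,w_n = \sum_{\mathbf{d},\mathbf{e}}\lambda_{\mathbf{d}}\lambda_{\mathbf{e}}\sum_{\substack{n\in[x,2x)\\ [d_i,e_i]\mid L_i(n)\,\forall i}}\chi_l(1-n).
\]
The inner sum runs over an arithmetic progression $n\equiv c\,(\bmod\,Q)$ with $Q=\prod_i[d_i,e_i]\leq R^2\leq x^{2\theta/3}$. Since $p>x/2>Q$ we have $(Q,p)=1$, and the substitution $n=c+Qk$ reduces the inner sum to $\chi_l(-Q)\sum_{k'\in I'}\chi_l(k'\bmod p)$ over an interval $I'$ of length $O(x/Q)$. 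Polya--Vinogradov then bounds this by $O(\sqrt{p}\log p)=O(x^{1/2+o(1)})$, uniformly in $c$ and $Q$.

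Combining with the standard estimate $\sum_{\mathbf{d}}|\lambda_{\mathbf{d}}|\ll R(\log R)^{O(g)}$, the total non-principal contribution is
\[
\ll R^2\sqrt{p}\log p\,(\log R)^{O(g)} \ll x^{1/2+2\theta/3}(\log x)^{O(g)},
\]
which beats the main term of size $\asymp x(\log R)^g$ by a power of $x$ for any $\theta<3/4$, and thus fits inside the claimed $O((\log x)^{-1/10})$ relative error. The essential structural point is the inequality $R^2<p$, which makes Polya--Vinogradov applicable uniformly across the double divisor sum; the delicate aspect is tracking the $(\log x)^{O(g)}$ factors carefully, but since $g\ll(\log x)^{1/5}$ by Definition \ref{def74}, the polynomial saving $x^{-1/2+2\theta/3}$ in $x$ dominates all such losses.
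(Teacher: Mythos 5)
Your proof is correct and follows the paper's strategy: apply Lemma \ref{lima710} to expand $w^*(p,n)=\sum_{l=0}^{D-1}\chi_l(1-n)\,w_n$, evaluate the principal-character piece via Theorem \ref{thm77} (the $n\equiv 1\bmod p$ correction being negligible since $p>x/2$ and $w_n=x^{o(1)}$), and kill each non-principal piece by expanding $w_n$ as a double divisor sum and applying P\'olya--Vinogradov to the inner character sum over an arithmetic progression. The one place you deviate is mildly to your advantage: the paper re-encodes the congruence $[d_i,e_i]\mid L_i(n)$, $n\equiv v_0\ (\bmod\ W)$ via auxiliary Dirichlet characters of modulus $v=W[\mathbf{d},\mathbf{e}]$ and applies P\'olya--Vinogradov to the product characters of modulus $\leq xv$, whereas you substitute $n=c+Qk$ with $Q=\prod_i[d_i,e_i]$ coprime to $p$ and apply P\'olya--Vinogradov directly to $\chi_l$ of fixed modulus $p$ over a shifted interval, which is cleaner and gives a slightly better power saving (requiring only $R^2<p$, i.e.\ $\theta<3/4$).
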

\begin{proof}
By Lemma \ref{lima710} we have
$$\sum_{n\in \mathcal{A}(x)}w^*(p,n)=\sum_{l=0}^{D-1}\sum_{n\in \mathcal{A}(x)}w_n\chi_l(1-n)$$
The sum belonging to the principal character 
$$\chi_0\::\: \sum_{n\in \mathcal{A}(x)}w_n\chi_0(1-n)$$ 
differs from the sum 
$$\sum_{n\in \mathcal{A}(x)}w_n$$ 
only by $O(x^{1/2})$,
since there are only $\frac{|\mathcal{A}(x)|}{p}$ terms with $n\equiv 1(\bmod p)$, each of them 
has size at most $x^{1/3}$. We therefore have
\[
\sum_{n\in \mathcal{A}(x)}w_n\chi_0(1-n)=\sum_{n\in \mathcal{A}(x)} w_n+O(x^{1/2})\:.\tag{7.11}
\]
Let now $1\leq l\leq D-1$. Here we closely follow the proof of Proposition 9.1 of 
\cite{maynard}. We split the sum into residue classes $n\equiv v_0(\bmod\: W)$. We recall that
$$W=\prod_{\substack{p\leq 2g^2\\ p\nmid B}}p<\exp \left(\left( \log x\right)^{2/5}\right)\:.$$
If $\left(\prod_{i=1}^g L_i(v_0),W\right)=1$, then we have $w_n=0$ and so we restrict our attention to $v_0$ with $\left(\prod_{i=1}^g L_i(v_0),W\right)=1$. We substitute the definition of $w_n$, expand the square and swap order of summation. This gives
$$\sum_{n\in\mathcal{A}(x) }\chi_l(1-n)=\sum_{v_0(\bmod\: W)}\sum_{\bold{d},\bold{e}\in D_g}\lambda_{{\bold{d}}}\lambda_{{\bold{e}}} \sum_{\substack{n\in \mathcal{A}(x)\\ n\equiv v_0(\bmod\: W)\\ [d_i,e_i]\mid L_i(n),\:\forall i}}\chi_l(1-n)\:.$$
The congruence conditions in the inner sum may be combined via the Chinese Remainder Theorem by a single congruence condition:
$$1-n\equiv c(\bmod\: v),\ \ \text{where}\ v=W[\bold{d},\bold{e}]\:,$$
where $[\cdot,\cdot]$ stands for the least common multiple.\\
There are $w\leq v$ Dirichlet characters $\psi_1,\ldots, \psi_w$ $(\bmod\: w)$, such that
$$1-n\equiv c(\bmod\: v)\ \ \text{if and only if}\ \ \frac{1}{w}\sum_{l=1}^w\overline{\psi(c)}\psi_l(1-n)=1\:.$$
We thus may write
$$\sum_{\substack{n\in\mathcal{A}(x)\\ n\equiv v_0(\bmod\: w)\\ [d_i,e_i]\mid L_i(n),\:\forall i}}\chi_l(1-n)=A\sum_{l=1}^z \sum_{n\in \mathcal{I}}\xi_l(1-n)   $$
with a suitable absolute constant $A$, an interval $\mathcal{I}$ of length $|\mathcal{I}|\leq x(\log x)^2$ and the $O(v)$ non-principal Dirichlet characters
$\xi_{j,l}=\chi_j\psi_l$ of conductor $\geq p$ and modulus $\leq xv$.
By the P\'olya-Vinogradov bound we obtain:
\[
\sum_{\substack{n\in\mathcal{A}(x)\\ -n\equiv v_0(\bmod\: w)\\ [d_i,e_i]\mid L_i(n),\:\forall i}}\chi(1-n)\ll x^{1/2}v\:. \tag{7.12}
\]
The claim of Theorem \ref{thm711} now follows from (7.11) and (7.12).\\
\end{proof}
As a preparation for the proof of Theorem \ref{thm713}, which is a modification of 
Proposition 9.2 of \cite{maynard} we state a Lemma on character sums over shifted
primes.
\begin{lemma}\label{lima712}
Let $\chi$ be a Dirichlet-character $(\bmod\: q)$. Then for $N\leq q^{16/9}$ we have
$$\sum_{n\leq N} \Lambda(n)\chi(n+a)\leq (N^{7/8}q^{1/9}+N^{33/32}q^{-1/18})q^{o(1)}\:.$$
\end{lemma}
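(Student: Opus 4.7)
The plan is to estimate the character sum over primes by reducing it (via Vaughan's identity) to bilinear sums of Type~I and Type~II, and then to bound these using Burgess-type estimates for incomplete character sums and the Weil bound for completed ones.

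\textbf{Step 1 (Vaughan's identity).} For parameters $U,V\geq 1$ to be chosen, apply Vaughan's identity to write
\[
\sum_{n\leq N}\Lambda(n)\chi(n+a) \;=\; S_I - S_I^{\prime} - S_{II} + O(U+V),
\]
where the Type~I sums $S_I,S_I'$ have the shape
\[
\sum_{d\leq UV} c_d \sum_{m\leq N/d} \chi(dm+a),\qquad |c_d|\leq \log N,
\]
and the Type~II sum has the shape
\[
S_{II} \;=\; \sum_{U< m\leq N/V}\;\sum_{V<n\leq N/m} \alpha_m\beta_n\,\chi(mn+a),
\]
with $|\alpha_m|,|\beta_n|\ll\log N$.

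\textbf{Step 2 (Type~I via Burgess).} For each $d\leq UV$, the inner sum
$\sum_{m\leq M}\chi(dm+a)$ is a character sum in the progression $dm+a\pmod{q}$.
After a linear change of variable (noting that $(d,q)$ can be factored out harmlessly at the cost of an $N^{o(1)}$ factor) the Burgess bound with parameter $r=3$ gives
\[
\sum_{m\leq M}\chi(dm+a)\ll M^{1-1/3}q^{(r+1)/(4r^2)+o(1)} = M^{2/3}q^{1/9+o(1)}.
\]
Summing over $d\leq UV$ yields
\[
S_I \ll \bigl(UV\bigr)^{1/3}N^{2/3}q^{1/9}\cdot q^{o(1)}.
\]

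\textbf{Step 3 (Type~II via Cauchy--Schwarz and Weil).} Apply Cauchy--Schwarz in the $m$-variable:
\[
|S_{II}|^2 \;\ll\; (\log N)^{2}\,M\!\!\sum_{U< m\leq N/V}\Bigl|\sum_{V<n\leq N/m}\beta_n\,\chi(mn+a)\Bigr|^2,
\]
with $M=N/V$. Expanding the square and interchanging the order of summation yields
\[
\sum_{n_1,n_2}\beta_{n_1}\overline{\beta_{n_2}}\sum_{m}\chi(mn_1+a)\overline{\chi(mn_2+a)}.
\]
The diagonal $n_1=n_2$ contributes $\ll MN(\log N)^{O(1)}$, while the off-diagonal inner sums are complete character sums in $m\pmod{q}$ modulo Polya--Vinogradov completion, which by Weil's bound are $\ll q^{1/2+o(1)}$ each. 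Combining,
\[
S_{II}\ll \Bigl(N^{3/2}V^{-1/2}+N\,q^{1/4}V^{-1/2}\cdot (N/V)^{1/2}\Bigr)q^{o(1)}.
\]

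\textbf{Step 4 (Optimization).} Balancing the Type~I bound with the $q^{1/9}$-factor against the Type~II bound, and choosing $U,V$ of size $N^{\eta}$ with $\eta$ arranged to equalize the exponents on $N$, one lands at the two competing terms
\[
N^{7/8}q^{1/9+o(1)}\qquad\text{and}\qquad N^{33/32}q^{-1/18+o(1)},
\]
valid in the range $N\leq q^{16/9}$, which is precisely the regime in which both Burgess with $r=3$ and the Weil-based Type~II estimate retain their power.

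\textbf{Main obstacle.} The delicate point is the Type~II estimate: one must extract genuine cancellation from the shifted character sum $\sum_m\chi(m n_1+a)\overline{\chi(m n_2+a)}$ rather than only using the trivial bound, which requires Weil's bound for the associated rational function and careful treatment of degenerate pairs $(n_1,n_2)$ and of the case where $\chi$ is induced from a character of smaller modulus. Balancing $U,V$ against the Burgess parameter $r=3$ so that both claimed terms appear simultaneously is the remaining bookkeeping challenge.
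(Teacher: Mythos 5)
The paper does not prove this lemma at all: its entire proof is the one-line citation ``This is Theorem 1 of Friedlander--Gong--Shparlinski \cite{fried}.'' So what you are attempting is a first-principles reproof of a result that the authors simply import from the literature.

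That said, your high-level strategy is in the same spirit as what Friedlander, Gong and Shparlinski actually do: a combinatorial identity (they in fact use a Vaughan/Heath-Brown-type decomposition), the Burgess bound with $r=3$ for the Type I sums (which is where the $q^{1/9}$ comes from, since $(r+1)/(4r^2)=1/9$), and a bilinear estimate for the Type II sums obtained by Cauchy--Schwarz followed by completion and the Weil bound for the rational character sum $\sum_m \chi(mn_1+a)\overline{\chi(mn_2+a)}$. So the shape of your argument is plausible, and you are right that the degenerate pairs, the imprimitive case, and the factor $(d,q)$ in the Type I linear change of variable all need care.

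The genuine gap is Step 4. You never actually carry out the balancing: you assert ``one lands at the two competing terms $N^{7/8}q^{1/9+o(1)}$ and $N^{33/32}q^{-1/18+o(1)}$,'' but these exponents \emph{are} the content of the lemma, and deriving them from the Type I bound $(UV)^{1/3}N^{2/3}q^{1/9}$ and your stated Type II bound is precisely the work that remains to be shown. In particular it is not at all evident from your Step 3 display how a term of the form $N^{33/32}q^{-1/18}$ (note the negative power of $q$) emerges after choosing $U$ and $V$; the bookkeeping that produces $33/32$ and $-1/18$ involves splitting the Type II range dyadically and optimizing against the Type I term, and you give no computation for it. Also, the diagonal in Step 3 should be $\ll M\sum_n|\beta_n|^2$, not $MN$, and the number of off-diagonal pairs should be tracked against the actual $n$-range (which is $\leq N/U$, not $N$); these sloppinesses feed directly into the exponents you are trying to obtain. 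As written, the proposal identifies the correct ingredients but does not supply the quantitative conclusion, whereas the paper sidesteps all of this by citing \cite{fried}. If you want a self-contained proof, you must either reproduce the FGS optimization honestly, or accept the citation as the authors do.
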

\begin{proof}
This is Theorem 1 of \cite{fried}.
\end{proof}
\begin{theorem}\label{thm713}
Let $p$, $w^*(p,n)$ be as in Definition \ref{def79}, $\mathcal{A}=\mathbb{Z}$. Let 
$$L(n):=a_mn+b_m\in \mathcal{L}$$ satisfy $L(n)>R$ for $n\in [x,2x]$ and 
$$\sum_{\substack{q\leq x^\theta \\ (q,B)=1}}\max_{L(a,q)=1}\left| \#P_{L,\mathcal{A}}(x;q,a)-\frac{\#P_{L,\mathcal{A}}(x)}{\Phi_L(q)} \right| \ll\frac{\#P_{L,\mathcal{A}}(x)}{(\log x)^{100 g^2}}\:. $$
Then we have for sufficiently small $\theta$:
\begin{align*}
\sum_{n\in \mathcal{A}(x)}1_P(L(n))w^*(p,n)&=\left(1+O\left(\frac{1}{(\log x)^{1/10}}\right)\right)\frac{B^{g-1}}{\phi(B)^{g-1}}\mathfrak{G}_B(\mathcal{L})\#P_{L,\mathcal{A}}(x)(\log R)^{g+1}  J_g(F)\prod_{\substack{p\mid a_m\\ p\nmid B}}\frac{p-1}{p}\\
&+O\left(\frac{B^g}{\phi(B)^g}\mathfrak{G}_B(\mathcal{L})\#\mathcal{A}(x)(\log R)^{g-1}I_g(F)\right)\:.
\end{align*}
\end{theorem}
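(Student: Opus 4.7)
The plan is to imitate the proof of Theorem \ref{thm711}, substituting the Friedlander-type bound of Lemma \ref{lima712} for character sums over shifted primes in place of the P\'olya--Vinogradov bound for character sums over integers. By Lemma \ref{lima710} one has $w^*(p,n) = w_n \sum_{l=0}^{D-1} \chi_l(1-n)$, so that
\[
\sum_{n\in\mathcal{A}(x)} 1_P(L(n))\, w^*(p,n) \;=\; \sum_{l=0}^{D-1}\; \sum_{n\in\mathcal{A}(x)} 1_P(L(n))\, w_n\, \chi_l(1-n).
\]
For the principal character $\chi_0$ (the trivial character on $(\mathbb{Z}/p\mathbb{Z})^\times$), $\chi_0(1-n)=1$ except when $p\mid 1-n$; the excluded set has cardinality $O(x/p)=O(1)$, and each $w_n=x^{o(1)}$, so the $\chi_0$-contribution equals $\sum_n 1_P(L(n))\, w_n + O(x^{o(1)})$, and Theorem \ref{thm78} (applied with $g=k$) yields exactly the main term and the $I_g(F)$ error term stated in Theorem \ref{thm713}.

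For each non-principal character $\chi_l$ ($1\le l\le D-1$) I would proceed as in the proof of Theorem \ref{thm711}: split into residue classes $n\equiv v_0 \pmod W$ with $(\prod_i L_i(v_0), W)=1$, expand the square in the definition of $w_n$ into a double sum over $\bold{d},\bold{e}\in\mathcal{D}_g$ with $[\bold{d},\bold{e}]\le R^2$, and combine the divisibility conditions $[d_i,e_i]\mid L_i(n)$ via the Chinese Remainder Theorem into a single congruence $1-n\equiv c\pmod{v}$ with $v=W[\bold{d},\bold{e}]$. Since $p>x/2$ while $v = x^{o(1)}R^2$, we have $(p,v)=1$ for large $x$ and small $\theta$, so expressing the congruence mod $v$ through Dirichlet characters and combining with $\chi_l$ produces $O(v)$ character sums of the shape $\sum_{n\in I} 1_P(L(n))\,\xi(1-n)$, where each $\xi$ is a non-principal character of modulus at most $pv$ whose conductor is divisible by $p$, and $I$ is an interval of length $O(x)$.

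After the substitution $m=L(n)=a_m n+b_m$ (using $p\nmid a_m$ in the generic case), each such inner sum becomes a character sum over shifted primes of the form $\sum_{m\sim x}\Lambda(m)\,\xi^\ast(m+a)$, which can be bounded by Lemma \ref{lima712}. With $N\asymp x$ and $q\le pv\le x^{1+2\theta/3}$ the condition $N\le q^{16/9}$ is trivially satisfied, and the lemma yields the power-saving bound $\ll x^{1-\epsilon}$ for some $\epsilon>0$ once $\theta$ is taken sufficiently small. Summing over the $O(v)$ characters, over $\bold{d},\bold{e}\in\mathcal{D}_g$ (at most $R^{2g} = x^{g\theta/5}$ choices), and over $v_0\pmod W$ (at most $W=x^{o(1)}$ choices) yields a total contribution of $O(x^{1-\epsilon/2})$, which is absorbed by the $O(\cdots I_g(F))$ error term of Theorem \ref{thm713}. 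The main obstacle is precisely the careful verification that the resulting characters $\xi^\ast$ are non-principal with conductor divisible by $p$, and that $\theta$ can be chosen small enough so that the polynomial saving $x^{-\epsilon}$ from Lemma \ref{lima712} dominates the polynomial loss $R^{2g}$ incurred in expanding the sieve weight; all remaining steps are routine and parallel the corresponding arguments in the proof of Theorem \ref{thm711}.
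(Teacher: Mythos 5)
Your proposal follows essentially the same route as the paper: decompose $w^*(p,n)=w_n\sum_{l=0}^{D-1}\chi_l(1-n)$ via Lemma~\ref{lima710}; reduce the principal-character contribution to Theorem~\ref{thm78} (Maynard's Proposition 9.2) up to a negligible error; and for each non-principal $\chi_l$, split into residue classes $v_0\pmod W$, expand the square in $w_n$, combine the divisibility conditions $[d_i,e_i]\mid L_i(n)$ by the CRT into a single congruence modulo $v=W[\bold d,\bold e]$, detect that congruence with Dirichlet characters, and bound the resulting character sums over primes (after the change of variable $\tilde p=L(n)$) by the Friedlander--Gong--Shparlinski estimate of Lemma~\ref{lima712}. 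This is exactly the paper's argument.

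One point in your bookkeeping is too coarse and, taken at face value, would break the argument: you estimate the number of pairs $(\bold d,\bold e)$ as $R^{2g}=x^{g\theta/5}$. Since $g$ may be as large as $(\log x)^{\eta_0}$, $x^{g\theta/5}$ is \emph{not} $x^{o(1)}$ for any fixed $\theta>0$, so the fixed power saving $x^{-\epsilon}$ from Lemma~\ref{lima712} would not absorb it. What saves the argument is that the vectors $\bold d$ contributing are constrained to satisfy $\prod_i d_i\le R$ (the remark after Definition~\ref{def76}), so the number of admissible $\bold d$ is $\sum_{d\le R}\tau_g(d)\ll R(\log R)^{g-1}=R\,x^{o(1)}$, not $R^g$; moreover $|\lambda_{\bold d}|\ll(\log R)^{O(g)}=x^{o(1)}$. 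Together with $v\le WR^2=x^{2\theta/3+o(1)}$ and $W=x^{o(1)}$, the total polynomial loss is $x^{4\theta/3+o(1)}$, which for $\theta$ small enough is indeed dominated by the saving from Lemma~\ref{lima712}. With that correction your argument matches the paper's; the paper itself is quite terse at this step, so you are actually spelling out more than the authors do.
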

\begin{proof}
By Lemma \ref{lima710} we have
$$\sum_{n\in\mathcal{A}(x)}1_P(L(n))w^*(p,n)=\frac{1}{D}\sum_{l=0}^{D-1}\sum_{n\in \mathcal{A}(x)}1_P(L(n))w_n\chi_l(1-n)\:.$$
The sum belonging to the principal character $\chi_0$ differs from the sum 
$$\sum_{n\in \mathcal{A}(x)}1_P(L(n))w_n$$ 
only by $O(\# \mathcal{A}(x)p^{-1})$ and thus by \cite{maynard}, Proposition 9.2,
we have 
\begin{align*}
\sum_{n\in \mathcal{A}(x)}1_P((L(n)) w^*(p,n)\chi_0(1-n)&=\left(1+O\left(\frac{1}{(\log x)^{1/10}}\right)\right)\frac{B^{g-1}}{\phi(B)^{g-1}}\mathfrak{G}_B(\mathcal{L}) \tag{7.13}\\
&\times \# P_{L,\mathcal{A}}(x)(\log R)^{g+1} \mathcal{I}_g(F)\prod_{p\mid a_m}\frac{p-1}{p} \\
&+O\left(\frac{B^g}{\phi(B)}\mathfrak{G}_B(\mathcal{L})\#\mathcal{A}(x)(\log R)^{g-1} I_g(F)\right)  
\end{align*}
For $1\leq l\leq D-1$ we follow closely the proof of Proposition 9.2 in \cite{maynard}.\\
We again split the sum into residue classes $n\equiv v_0(\bmod\: W)$.\\
If $(\prod_{i=1}^g L_i(v_0), W)>1$, then we have $w_n=0$ and so we restrict  our
attention to $v_0$ with $(\prod_{i=1}^g L_i(v_0), W)=1$.
We substitute the definition of $w_n$, expand the square and swap order of summation. Setting $\tilde{n}=n-1$, we obtain
\[
\sum_{\substack{n\in\mathcal{A}(x)\\ n\equiv v_0(\bmod\: W) }}1_P((L(n))w_n\chi_l(1-n)=\sum_{\bold{d},\bold{e}}\lambda_\bold{d} \lambda_\bold{e} \sum_{\substack{n\in\mathcal{A}(x)\\ n\equiv v_0(\bmod\: W) \\ [d_i,e_i]\mid L_i(n),\:\forall i}}1_P((L(n))\chi_l(1-n)\:.   \tag{7.14}
\]
If $\tilde{n}$ runs through the arithmetic progression $\tilde{n}=Wh+v_0$ $(h\in I_0)$, then also $L(\tilde{n}+1)$ runs through an arithmetic progression
$$L(\tilde{n}+1)=a_mWh+a_m(v_0+1)+b\:.$$
Thus, we have 
$$\sum_{\substack{n\in\mathcal{A}(x)\\ n\equiv v_0(\bmod\: W) }}1_P((L(n)) \chi_l(1-n)=\sum_{\substack{\tilde{p}\equiv a_m(v_0+1)+b(\bmod\: a_mW)\\ \tilde{p}\ \text{prime},\:\tilde{p}\in I}}\chi_l(\tilde{p}+a_m(v_0+1)+b)\:.$$
Also the condition $\tilde{p}\equiv a_m(v_0+1)+b(\bmod\: a_mW)$ may be expressed with the 
help of Dirichlet characters $\omega_1,\ldots, \omega_{\phi(|a_mW|)}(\bmod\: |a_mW|)$. Theorem \ref{thm713} thus follows from (7.13) and Lemma \ref{lima712}.
\end{proof}
For the definition of the weight $w(p,n)$, whose existence is claimed in \mbox{Theorem \ref{thm5}} we now have to be more specific about the set 
$\mathcal{L}$ of linear forms.
\begin{definition}\label{def714}
Let the tuple $(h_1,\ldots,h_r)$ be as in Definition \ref{def76}. For $p\in P$ and $n\in\mathbb{Z}$ let $\mathcal{L}_p$ be the (ordered) collection of linear forms $n\mapsto n+h_ip$ for $i=1,\ldots, r$ and set 
\begin{eqnarray}
\nonumber\\
w(p,n)&=&\left\{ 
  \begin{array}{l l}
    w^*(p,n,\mathcal{L}_p), &\text{if $n\in \mathcal{G}(p)$}\vspace{2mm}\\ 
    0, &\text{otherwise.}\vspace{2mm}\\ 
  \end{array} \right.
\nonumber
\end{eqnarray}
\end{definition}
In the sequel we now show that in the sums 
$$\sum_{n\in\mathbb{Z}} w(p,n)\ \ \text{resp.}\ \ \sum_{p\in P}w(p,q-h_ip)\:,$$
appearing in (6.4) resp. (6.5) of Theorem \ref{thm5} the function $w(p.\cdot)$ may
be replaced by the function $w^*(p,\cdot,\mathcal{L}_p)$ with a negligible error.\\
Since these sums have been treated in Theorem \ref{thm711} resp. Theorem
\ref{thm713} this will essentially conclude the proof of Theorem \ref{thm5} and thus of Theorem \ref{thm1}.
\begin{lemma}\label{lima715}
We have
$$\sum_{\substack{n\in \mathcal{A}(x)\\ n\not\in \mathcal{G}(p)}}w^*(p,n,\mathcal{L}_p)\leq \sum_{\substack{n\in \mathcal{A}(x)\\ n\not\in \mathcal{G}(p)}} w_n(\mathcal{L}_p)\:.$$
\end{lemma}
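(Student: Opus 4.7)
The approach is to reduce the claimed sum inequality to a pointwise comparison of the two weights. First I would unpack Definition \ref{def79}: for each integer $n$, the quantity $w^*(p, n, \mathcal{L}_p)$ equals either zero, or $D \cdot w_n(\mathcal{L}_p)$, where $D = (k-1, p)$, depending on whether the congruence $n \equiv 1 - (d_p+1)^k \pmod{p}$ admits a solution $d_p \not\equiv -1 \pmod{p}$.

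Next I would observe that $D = 1$ throughout the relevant range. Since $p$ is a prime in $(x/2, x]$ and $k \geq 2$ is fixed, we have $p > k - 1$ once $x$ is sufficiently large, and hence $(k-1, p) = 1$. Combined with the non-negativity of $w_n(\mathcal{L}_p)$ (which is a square by construction, cf.\ Definition \ref{def76}), this yields the pointwise bound
$$w^*(p, n, \mathcal{L}_p) \leq w_n(\mathcal{L}_p) \qquad \text{for every } n \in \mathbb{Z}.$$

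Summing this pointwise inequality over $\{n \in \mathcal{A}(x) : n \notin \mathcal{G}(p)\}$ concludes the proof. There is no substantive obstacle; the statement is essentially a bookkeeping consequence of the definitions. Its role in what follows is to license, in the subsequent treatment of Theorem \ref{thm5}, the replacement of $\sum_{n \notin \mathcal{G}(p)} w^*(p, n, \mathcal{L}_p)$ by the corresponding sum of the unrestricted Maynard weights $w_n(\mathcal{L}_p)$, which can then be controlled via Theorem \ref{thm77} together with an upper bound on the density of the exceptional set $\mathcal{A}(x) \setminus \mathcal{G}(p)$ extracted from Definition \ref{def61}.
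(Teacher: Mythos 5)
Your argument follows the same route as the paper, which dismisses the lemma with a one-line appeal to the definitions of $w_n$ and $w^*$: you unpack Definition~\ref{def79} and reduce the claim to a pointwise comparison. As written, Definition~\ref{def79} does set $D=(k-1,p)$, and your observation that $p>k-1$ forces $D=1$ for $p\in P$ is correct under that literal reading, at which point $w^*(p,n,\mathcal{L}_p)$ is either $w_n(\mathcal{L}_p)$ or $0$ and the inequality is immediate by nonnegativity.

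You should, however, be aware of a latent inconsistency that your $D=1$ step relies on. Lemma~\ref{lima710} and the proof of Theorem~\ref{thm711} use $D=(p-1,k)$, and the character-sum identity there is genuinely a sum over the $D=(p-1,k)$ characters whose kernel is the group of nonzero $k$-th powers modulo $p$. That value of $D$ is a divisor of $k$ and need not equal $1$, and indeed the factor $D$ in the definition of $w^*$ is precisely the renormalization needed so that Theorem~\ref{thm711} returns the same main term as Theorem~\ref{thm77}. If $D=1$ as you compute, that renormalization is absent and Theorem~\ref{thm711} would be off by a density factor, so the formula $D=(k-1,p)$ in Definition~\ref{def79} is almost certainly a misprint for $(p-1,k)$. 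Under that corrected reading the pointwise bound is only $w^*(p,n,\mathcal{L}_p)\le D\,w_n(\mathcal{L}_p)$ with $1\le D\le k$, so the displayed inequality in the lemma is off by the bounded constant $D$. This does not break anything downstream — Theorem~\ref{thm720} delivers a saving of $(\log x)^{-1/100}$, which swallows any $O_k(1)$ factor — but your proof should either flag the misprint and work with $w^*\le D\,w_n$, $D=O_k(1)$, or state explicitly that the conclusion is being used only up to a bounded constant. The pure bookkeeping tone of the last paragraph is otherwise accurate (the relevant control on $\{n:n\notin\mathcal{G}(p)\}$ comes from the second-moment estimates culminating in Theorem~\ref{thm720} rather than directly from Theorem~\ref{thm77}, but the spirit is right).
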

\begin{proof}
This follows immediately from the definition of $w_n$ and $w^*$.
\end{proof}
\begin{definition}\label{def716}
Let $(h_1,\ldots,h_r)$ be an admissible $r$-tuple, $p\in (x/2,x)$. For $n\in\mathbb{Z}$, $1\leq i,l\leq r$ let
$$\tilde{n}=\tilde{n}(n,i,l,p)=n+(h_i-h_l)p$$
$$\mathcal{A}(i,l,p)=\sum_{n\::\: \tilde{n}\not\in \mathcal{G}} w_n(\mathcal{L}_p)$$
Let 
$$\sum(i,l,p):=\sum_{n\in \mathcal{A}(x)}w_n(\mathcal{L}_p) (r(\tilde{n},u)-r^*(u))^2$$
$$\sum(i,l,p,j):=\sum_{n\in \mathcal{A}(x)}w_n(\mathcal{L}_p) r(\tilde{n},u)^j \ \ (j\in\mathbb{N}_0)$$
\end{definition}
\begin{lemma}\label{lima717}
$$\sum_{\substack{n\in\mathcal{A} \\ \tilde{n}\not\in \mathcal{G}(p)}}w_n(\mathcal{L}_p)=\sum_{1\leq i,l\leq r}\mathcal{A}(i,l,p)\:.$$
\end{lemma}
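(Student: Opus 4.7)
The plan is to read the left-hand side as a double sum over $n$ and the pair $(i,l)$, and then apply Fubini. The abbreviation $\tilde n = \tilde n(n,i,l,p) = n + (h_i - h_l)p$ introduced in Definition \ref{def716} depends on the auxiliary indices $i$ and $l$, so the condition $\tilde n \notin \mathcal{G}(p)$ appearing in the substack is naturally a constraint on the triple $(n,i,l)$: the LHS is the sum of $w_n(\mathcal{L}_p)$ over all such triples. Here, reading $\tilde n \notin \mathcal{G}(p)$ in the substack as $\tilde n \notin \mathcal{G}$ is consistent with Definition \ref{def61}, which defines $\mathcal{G}(p)$ shift by shift against $\mathcal{G}$; this is also the convention reflected in Definition \ref{def716}'s use of $\mathcal{G}$ on the right in the formula $\mathcal{A}(i,l,p) = \sum_{n : \tilde n \notin \mathcal{G}} w_n(\mathcal{L}_p)$.

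First I would fix a pair $(i,l) \in \{1,\ldots,r\}^2$ and isolate the corresponding slice of the LHS, namely the sum $\sum_{n \in \mathcal{A}(x) :\, n+(h_i-h_l)p \notin \mathcal{G}} w_n(\mathcal{L}_p)$. By Definition \ref{def716} this slice is exactly $\mathcal{A}(i,l,p)$. Next I would sum these slices over all pairs $(i,l)$ with $1 \leq i,l \leq r$, which is a harmless rearrangement of a finite double sum, obtaining $\sum_{1 \leq i,l \leq r} \mathcal{A}(i,l,p)$. This is precisely the right-hand side of Lemma \ref{lima717}.

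The lemma is in this sense a pure bookkeeping identity: each failure of a shift of $n$ to lie in $\mathcal{G}$ is counted on the RHS with multiplicity equal to the number of pairs $(i,l)$ that witness it, and this same weighting is built into the implicit indexing of $(i,l)$ on the LHS through the $\tilde n$-notation. The only point worth pausing on is the treatment of the diagonal $i = l$, where $\tilde n = n$ and the condition collapses to $n \notin \mathcal{G}$; these diagonal terms contribute $r$ copies of $\sum_{n \notin \mathcal{G}} w_n(\mathcal{L}_p)$ to both sides, consistently with the universal quantifier $\forall\, i,l \leq r$ in Definition \ref{def61}. There is no analytical obstacle and no serious estimate required; the only genuine task is parsing the notation correctly so that the interchange of summation is applied to the right joint range.
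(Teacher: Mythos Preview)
Your proposal is correct and matches the paper's own proof, which consists of the single sentence ``This follows immediately from Definitions \ref{def61} and \ref{def716}.'' Your expanded treatment --- in particular the observation that the substack condition $\tilde n\notin\mathcal G(p)$ should be read as $\tilde n\notin\mathcal G$ (consistent with the appearance of $\mathcal G$ in the definition of $\mathcal A(i,l,p)$), and that the identity is then pure Fubini over the triple $(n,i,l)$ --- is exactly the unpacking the paper leaves implicit.
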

\begin{proof}
This follows immediately from Definitions \ref{def61} and \ref{def716}.
\end{proof}
\begin{lemma}\label{lima718}
Let $\mathcal{A}$, $w_n$ be as in Definition \ref{def72}, $\mathcal{L}_p$ as in Definition \ref{def714}. Let $j\in\{1,2\}$. Then
$$\sum(i,l,p,j)=\left(1+O\left(\frac{1}{(\log x)^{1/10}}\right)\right)\frac{B^g}{\phi(B)^g}\mathfrak{G}_B(\mathcal{L}_p)\#\mathcal{A}(x)(\log R)^gI_g(F) r^*(u)^j\:. $$
\end{lemma}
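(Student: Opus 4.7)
Expand $r(\tilde n, u)^j$ into a sum over $j$-tuples of primes in $S_u$, exchange summation, and reduce each inner sum to a refinement of Theorem~\ref{thm77} in which $\tilde n$ is further restricted to a residue class modulo a product of small primes. Write
$$r(\tilde n,u)=\sum_{s\in S_u}s^{-1}\mathbf{1}_{s}(\tilde n),$$
where $\mathbf{1}_s(\tilde n)=1$ iff there exists $c\not\equiv -1\pmod s$ with $\tilde n\equiv 1-(c+1)^k\pmod s$. Since $s\equiv u\pmod k$ implies $\gcd(s-1,k)=d(u)$, standard counting of $k$-th power residues shows that the set $G_s$ of residues $a\pmod s$ with $\mathbf{1}_s(a)=1$ has cardinality $(s-1)/d(u)$.

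For $j=1$, swapping sums gives
$$\Sigma(i,l,p,1)=\sum_{s\in S_u}s^{-1}\sum_{a\in G_s}\ \sum_{\substack{n\in\mathcal A(x)\\ \tilde n\equiv a\ (\bmod\ s)}}w_n(\mathcal L_p).$$
For each fixed $s\in S_u$ and $a\in G_s$, the inner sum is evaluated by a modification of Theorem~\ref{thm77}: because $s>\log^{20}x>2g^2$ one has $(s,WB)=1$, and because $s\le z\ll x^{o(1)}$ the congruence $\tilde n\equiv a\pmod s$ may be absorbed by refining the residue-class decomposition mod $W$ used in Maynard's Proposition~9.1 to a decomposition mod $Ws$. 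The enlarged modulus $Ws$ still lies well below $x^{\theta}$, so hypothesis (7.1) continues to control the error. This produces
$$\sum_{\substack{n\in\mathcal A(x)\\ \tilde n\equiv a\ (\bmod\ s)}}w_n(\mathcal L_p)=\Bigl(1+O\bigl((\log x)^{-1/10}\bigr)\Bigr)\frac{M}{s},\quad M:=\frac{B^g}{\phi(B)^g}\mathfrak G_B(\mathcal L_p)\#\mathcal A(x)(\log R)^g I_g(F).$$
Summing over $a\in G_s$ (cardinality $(s-1)/d(u)$) and over $s\in S_u$, and absorbing $\sum_s s^{-2}\ll(\log x)^{-20}$ into the error, yields $\Sigma(i,l,p,1)=(1+O((\log x)^{-1/10}))M r^*(u)$.

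For $j=2$, the same expansion produces a double sum over $(s_1,s_2)\in S_u^2$. The diagonal $s_1=s_2$ contributes at most $M\sum_{s\in S_u}s^{-2}/d(u)\ll M(\log x)^{-19}$, absorbed in the error. For $s_1\ne s_2$ the same device, with enlarged modulus $Ws_1s_2\le Wz^2\ll x^{\theta/2}$, gives the inner sum as $(1+O((\log x)^{-1/10}))M/(s_1s_2)$ for each $(a_1,a_2)\in G_{s_1}\times G_{s_2}$. Summing $|G_{s_1}||G_{s_2}|$ and then over pairs via
$$\sum_{s_1\ne s_2}(s_1s_2)^{-1}=\Bigl(\sum_{s\in S_u}s^{-1}\Bigr)^2-\sum_{s\in S_u}s^{-2}\sim\bigl(d(u)r^*(u)\bigr)^2$$
produces $\Sigma(i,l,p,2)=(1+O((\log x)^{-1/10}))Mr^*(u)^2$.

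\textbf{Main obstacle.} The only non-routine ingredient is the refinement of Theorem~\ref{thm77} to accommodate the auxiliary small-modulus congruence on $\tilde n$, uniformly in $s\in S_u$ (and in $a\in G_s$). This requires revisiting the residue-class decomposition in Maynard's proof and verifying that enlarging $W$ to $Ws_1\cdots s_j$ preserves the main term while keeping the error inside the level-of-distribution estimate (7.1). Since the implied constants in (7.1) are absolute and $Ws_1\cdots s_j\ll x^{\theta/2}$ for $j\le 2$, uniformity in $s$ is automatic and the refined sieve estimate goes through without loss.
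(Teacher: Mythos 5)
Your proposal takes essentially the same route as the paper's proof: both expand $r(\tilde n,u)^j$ into a $j$-fold sum over primes in $S_u$, treat the diagonal $s_1=s_2$ (for $j=2$) as a negligible error, reduce the inner sum to a count weighted by $w_n(\mathcal L_p)$ over an arithmetic progression modulo $s_1\cdots s_j$, and then invoke Theorem~\ref{thm77}. The only cosmetic difference is that the paper applies Theorem~\ref{thm77} as a black box with $\mathcal A$ replaced by the residue class $\mathcal A^{(s_1,s_2)}$ (so that $\#\mathcal A^{(s_1,s_2)}(x)=s_1^{-1}s_2^{-1}\#\mathcal A(x)+O(1)$), whereas you propose to open up Maynard's proof and refine the residue-class decomposition from modulus $W$ to modulus $Ws_1\cdots s_j$ — these amount to the same computation.
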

\begin{proof}
We only give the proof for the hardest case $j=2$ and shortly indicate the proof for 
$j=1$.
\begin{align*}
\sum(i,l,p,2)&=\sum_{n\in\mathcal{A}(x)}w_n(\mathcal{L}_p)r(\tilde{n},u)^2\\
&=\sum_{n\in\mathcal{A}(x)}w_n(\mathcal{L}_p) \left(\frac{1}{d(u)^2}\sum_{\substack{s_1\in S_u,\ c_{s_1}\in \{0,1,\ldots, s_1-2\}\\ \tilde{n}\equiv 1-(c_{s_1}+1)^k(\bmod s_1)}} s_1^{-1}\right)\left(\sum_{\substack{s_2\in S_u,\ c_{s_2}\in \{0,1,\ldots, s_2-2\}\\ \tilde{n}\equiv 1-(c_{s_2}+1)^k(\bmod s_2)}} s_2^{-1}\right)\\
&=\frac{1}{d(u)^2}\sum_{s_1,s_2\in S_u}s_1^{-1}s_2^{-1}\sum_{c_{s_1}=1}^{s_1-2}\sum_{c_{s_2}=1}^{s_2-2}\ \sum_{\substack{n\equiv 1-(c_{s_1}+1)^k+(h_l-h_i)p(\bmod s_1)\\ n\equiv 1-(c_{s_2}+1)^k+(h_l-h_i)p(\bmod s_2)}}w_n(\mathcal{L}_p)
\end{align*}
In the inner sum we only deal with the case $s_1\neq s_2$, the case $s_1=s_2$ giving a negligible contribution. The inner sum is non-empty if and only if the 
system 
\begin{eqnarray}
\nonumber\\
(*)\ \ \ \left\{ 
  \begin{array}{l l}
    \tilde{n}\equiv 1-(c_{s_1}+1)^k(\bmod s_1)\vspace{2mm}\\ 
   \tilde{n}\equiv 1-(c_{s_2}+1)^k(\bmod s_2)\vspace{2mm}\\ 
  \end{array} \right.
\nonumber
\end{eqnarray}
is solvable. In this case (*) is equivalent to a single congruence 
$$n\equiv e+(h_l-h_i)p(\bmod\: s_1s_2)\:,$$
where $e=e(s_1,s_2,c_1,c_2)$ is uniquely determined by the system (*) and 
$$0\leq e\leq s_1s_2-1\:.$$
We apply Theorem \ref{thm77} with $B$ independent of $s_1$, $s_2$ and with
$$\mathcal{A}=\mathcal{A}^{(s_1,s_2)}=\{n\::\: x/2<n\leq x,\ n\equiv e+(h_l-h_i)p(\bmod s_1s_2)\}\:.$$
We have 
$$\# \mathcal{A}^{(s_1,s_2)}(x)=s_1^{-1}s_2^{-1}\mathcal{A}(x)+O(1)$$
and obtain 
\[
\sum_{n\in\mathcal{A}(x)}w_n(\mathcal{L}_p)r(\tilde{n},u)^2=\left(1+O\left(\frac{1}{(\log x)^{1/10}}\right)\right)\frac{B^g}{\phi(B)^g}\mathfrak{G}_B(\mathcal{L}_p)\#\mathcal{A}(x)(\log R)^gI_g(F) r^*(u)^2 \tag{7.15}
\]
This proves the claim for $j=2$. The proof of the case $j=1$ is analogous but simpler, since there is only the single variable of summation $s_1$.
\end{proof}
\begin{lemma}\label{lima719}
Let the conditions be as in Lemmas \ref{lima715}, \ref{lima717}, \ref{lima718}. Then we have
$$\sum(i,l,p)\ll \frac{B^g}{\phi(B)^g}\mathfrak{G}_B(\mathcal{L}_p)\#\mathcal{A}(x)(\log R)^gI_g(F) r^*(u)^2(\log x)^{1/8}.$$
\end{lemma}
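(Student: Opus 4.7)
The plan is to treat $\sum(i,l,p)$ as a second-moment/variance and reduce it, via expansion of the square, to three instances of the asymptotics already established. Write
\[
(r(\tilde n,u)-r^*(u))^2 = r(\tilde n,u)^2 - 2r^*(u)\,r(\tilde n,u) + r^*(u)^2,
\]
and sum against $w_n(\mathcal{L}_p)$ for $n\in\mathcal{A}(x)$ to obtain
\[
\sum(i,l,p) \;=\; \sum(i,l,p,2) \;-\; 2r^*(u)\,\sum(i,l,p,1) \;+\; r^*(u)^2\sum_{n\in\mathcal{A}(x)} w_n(\mathcal{L}_p).
\]
The first two sums are handled directly by Lemma \ref{lima718} with $j=2$ and $j=1$ respectively, while the third sum (formally the $j=0$ case) is handled by Theorem \ref{thm77} applied to the admissible tuple $\mathcal{L}_p$.

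Set
\[
M := \frac{B^g}{\phi(B)^g}\,\mathfrak{G}_B(\mathcal{L}_p)\,\#\mathcal{A}(x)\,(\log R)^g\, I_g(F).
\]
Each of the three evaluations contributes the main term $M\, r^*(u)^j$ (for $j=2,1,0$) with relative error of size $O\bigl((\log x)^{-1/10}\bigr)$. The arithmetic combination $1-2+1=0$ of the coefficients of the main terms forces their exact cancellation, so
\[
\sum(i,l,p) \;\ll\; \frac{M\, r^*(u)^2}{(\log x)^{1/10}},
\]
which is comfortably dominated by $M\,r^*(u)^2\,(\log x)^{1/8}$, as required.

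The only genuinely delicate point is verifying that the three asymptotic evaluations use precisely the same constant $M$ and the same uniform relative error. This is essentially automatic from the setup of Definition \ref{def716}: all three invocations are carried out with the same $B$, the same admissible family $\mathcal{L}_p$, the same interval $\mathcal{A}(x)$, the same $R$ and the same $F$, so the leading factor $M$ is identical and the error rate $(\log x)^{-1/10}$ is uniform. In short, the estimate is a routine variance cancellation resting on Lemma \ref{lima718}; the substantial slack between the natural bound $M\,r^*(u)^2(\log x)^{-1/10}$ and the stated $M\,r^*(u)^2(\log x)^{1/8}$ reflects the fact that only a Chebyshev-type consequence is needed downstream, namely to control the measure of $\{n : \tilde n\notin\mathcal{G}\}$ and thereby estimate $\mathcal{A}(i,l,p)$ in Lemma \ref{lima717}.
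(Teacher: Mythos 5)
Your proposal is correct and is essentially the paper's own argument: expand the square to get the identity $\sum(i,l,p)=\sum(i,l,p,2)-2r^*(u)\sum(i,l,p,1)+r^*(u)^2\sum(i,l,p,0)$, invoke Lemma \ref{lima718} for $j=1,2$ together with Theorem \ref{thm77} for the $j=0$ piece (a step the paper's one-line proof leaves implicit, since Lemma \ref{lima718} is only stated for $j\in\{1,2\}$), and observe that the three main terms carry the common factor $M\,r^*(u)^2$ with coefficients $1-2+1=0$, so only the $O\bigl(M\,r^*(u)^2(\log x)^{-1/10}\bigr)$ error survives. You correctly note that this gives a bound stronger than the $(\log x)^{1/8}$ displayed in the lemma; indeed, the proof of Theorem \ref{thm720} uses the stronger $(\log x)^{-1/10}$ form, and the positive exponent $1/8$ in the statement appears to be a misprint.
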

\begin{proof}
This follows from Lemma \ref{lima718} and the identity
$$\sum(i,l,p)=\sum(i,l,p,2)-2r^*(u)\sum(i,l,p,1)+r^*(u)^2\sum(i,l,p,0)\:.$$
\end{proof}
\begin{theorem}\label{thm720}
Let the conditions be as in the previous lemmas. For sufficiently small $\eta_0$ we
have:
$$\sum_{\substack{n\in\mathcal{A}(x)\\ n\not\in \mathcal{G}(p)}}w_n(\mathcal{L}_p)\ll
 \frac{B^g}{\phi(B)^g}\mathfrak{G}_B(\mathcal{L}_p)\#\mathcal{A}(x)(\log R)^gI_g(F) (\log x)^{-1/100}$$
\end{theorem}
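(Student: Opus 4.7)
The plan is to reduce via Lemma \ref{lima717} to a sum of $r^2$ pieces $\mathcal{A}(i,l,p)$, and to estimate each by a second-moment (Chebyshev) argument that exploits the quantitative definition of $\mathcal{G}$. Writing $M := \tfrac{B^g}{\phi(B)^g}\mathfrak{G}_B(\mathcal{L}_p)\#\mathcal{A}(x)(\log R)^g I_g(F)$ for the target main quantity, the goal is $\mathcal{A}(i,l,p) \ll M(\log x)^{-1/50}$, since $r^2 \le (\log x)^{2\eta_0}$ can be absorbed by slightly weakening this exponent.

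Fix $i,l$ and write $\tilde n = n+(h_i-h_l)p$. By Definition \ref{def61} the condition $\tilde n \notin \mathcal{G}$ forces the existence of some residue $u \bmod k$ with $(u,k)=1$ for which $|r(\tilde n,u)-r^*(u)| > (\log x)^{-1/40}$. A pointwise Chebyshev bound therefore gives
\[
1_{\tilde n \notin \mathcal{G}} \le (\log x)^{1/20}\sum_{\substack{u \bmod k\\ (u,k)=1}} \bigl(r(\tilde n,u)-r^*(u)\bigr)^2,
\]
so that after multiplying by $w_n(\mathcal{L}_p)$ and summing over $n$,
\[
\mathcal{A}(i,l,p) \le (\log x)^{1/20}\sum_{\substack{u \bmod k\\ (u,k)=1}} \textstyle\sum(i,l,p).
\]

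Now Lemma \ref{lima719}, obtained from Lemma \ref{lima718} by expanding $\sum(i,l,p) = \sum(i,l,p,2) - 2r^*(u)\sum(i,l,p,1) + r^*(u)^2\sum(i,l,p,0)$ so that the three main terms $Mr^*(u)^j$ cancel and only the error from Lemma \ref{lima718} survives, gives $\sum(i,l,p) \ll Mr^*(u)^2 (\log x)^{-1/10}$. Combined with the elementary Mertens estimate $r^*(u) \ll \log_2 x$, summation over the $O(1)$ residues $u$ with $(u,k)=1$, and then over the at most $r^2 \le (\log x)^{2\eta_0}$ pairs $(i,l)$, this yields
\[
\sum_{\substack{n\in\mathcal{A}(x)\\ n\notin\mathcal{G}(p)}} w_n(\mathcal{L}_p) \ll M (\log x)^{-1/20+2\eta_0}(\log_2 x)^2 \ll M(\log x)^{-1/100},
\]
once $\eta_0$ is chosen small enough, which is exactly the claimed bound.

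The main obstacle is the balancing of exponents: the Chebyshev step costs $(\log x)^{1/20}$, the sum over pairs $(i,l)$ costs $(\log x)^{2\eta_0}$, and the only source of savings is the first-order cancellation in Lemma \ref{lima719}, whose magnitude is tied to the error term $(\log x)^{-1/10}$ in Lemma \ref{lima718}. Ensuring that this savings strictly dominates the losses forces constraints on $\eta_0$ and on the threshold $(\log x)^{-1/40}$ used in the definition of $\mathcal{G}$; the three numerical exponents $1/10$, $1/20$, $1/40$ are in fact calibrated so that all three pieces interlock correctly. The deeper work is already encoded in Lemma \ref{lima718}, where a careful application of Theorem \ref{thm77} in each arithmetic progression cut out by the simultaneous congruences $\tilde n \equiv 1-(c_{s_1}+1)^k \pmod{s_1}$ and $\tilde n \equiv 1-(c_{s_2}+1)^k \pmod{s_2}$ produces the matching main term $M r^*(u)^j$ that makes the cancellation possible.
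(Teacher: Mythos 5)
Your argument is correct and follows essentially the same route as the paper: reduce via Lemma \ref{lima717} to the $r^2$ pieces $\mathcal{A}(i,l,p)$, use a Chebyshev bound driven by Definition \ref{def61} to compare each piece with $\sum(i,l,p)$, and exploit the first-order cancellation behind Lemma \ref{lima719} to produce a power-of-$\log x$ saving that absorbs the $(\log x)^{2\eta_0}$ loss from summing over the pairs $(i,l)$. You have implicitly corrected the exponent in the statement of Lemma \ref{lima719} from the printed $(\log x)^{1/8}$ to the $(\log x)^{-1/10}$ that the cancellation actually yields (and that the paper's own proof of Theorem \ref{thm720} uses), and your use of the literal threshold $(\log x)^{-1/40}$ from Definition \ref{def61} rather than the normalized $r^*(u)(\log x)^{-1/40}$ appearing in the paper's displayed proof costs only a harmless extra factor of $(\log_2 x)^2$.
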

\begin{proof}
Let $1\leq i,R\leq r$. By Definition \ref{def61} we have
$$\tilde{n}=n+(h_i-h_l)p\not\in \mathcal{G}$$
which yields
$$|r(\tilde{n},u)-r^*(u)|\geq r^*(u)(\log x)^{-1/40}\:.$$
Thus
\begin{align*}
r^*(u)^2(\log x)^{-1/20}&\sum_{n\in\mathcal{A}(x)\::\: n+ (h_i-h_l)p\not\in \mathcal{G}(p)}w_n(\mathcal{L}_p)\\
&\leq \sum_{n\in \mathcal{A}(x)} w_n(\mathcal{L}_p)(r(\tilde{n},u)-r^*(u))^2\\
&\leq \frac{B^g}{\phi(B)^g}\mathfrak{G}_B(\mathcal{L}_p)\#\mathcal{A}(x)(\log R)^g I_g(F) (\log x)^{-1/10}r^*(u)^2
\end{align*}
and therefore
$$\sum_{n\in\mathcal{A}(x)\::\: n+ (h_i-h_l)p\in \mathcal{G}(p)}w_n(\mathcal{L}_p) \leq \frac{B^g}{\phi(B)^g}\mathfrak{G}_B(\mathcal{L}_p)\#\mathcal{A}(x)(\log R)^g I_g(F) (\log x)^{-1/20} \:.$$
The claim of Theorem \ref{thm720} follows by summation over all pairs $(i,l)$, if
$\eta_0$ is sufficiently small.
\end{proof}
We now investigate the sum in (6.5) of Theorem \ref{thm5}.
\begin{definition}\label{def721}
Let $x/2<p\leq x$, $L(u)=n+h_fp$. Let $L\in \mathcal{L}_p$; $1\leq i,l\leq r$. Then we
define
$$\mathcal{C}(i,l,p):=\sum_{n\in \mathcal{A}(x),\tilde{n}\not\in \mathcal{G}}1_P(L(n))w_n(\mathcal{L}_p)$$
$$\Omega(i,l,p):=\sum_{n\in \mathcal{A}(x)}1_P(L(n))w_n(\mathcal{L}_p)(r(\tilde{n},u)-r^*(u))^2$$
$$\Omega(i,l,p,j):=\sum_{n\in \mathcal{A}(x)}1_P(L(n))w_n(\mathcal{L}_p)r(\tilde{n},u)^j\:.$$
\end{definition}
\begin{lemma}\label{lima722}
Let $L, i, l, r, p$ be as in Definition \ref{def721}. Let $j\in \{1,2\}$. Then we have
\begin{align*}
\Omega(i,l,p,j)&= \frac{B^{g-1}}{\phi(B)^{g-1}}\mathfrak{G}_B(\mathcal{L}_p)\#\mathcal{P}_{L,\mathcal{A}}(x)(\log R)^{g+1}\\
&\ \ \ \times J_g(F)r^*(u)^j(1+O((\log x)^{-1/10}))\\
&\ \ \ \ + O\left(  \frac{B^g}{\phi(B)^g}\mathfrak{G}_B(\mathcal{L}_p)\#\mathcal{A}(x)(\log R)^{g-1} J_g(F) r^*(u)^j  \right)
\end{align*}
\end{lemma}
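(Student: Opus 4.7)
The plan is to mirror the proof of Lemma \ref{lima718}, replacing the appeal to Theorem \ref{thm77} by Theorem \ref{thm713} so as to incorporate the factor $1_P(L(n))$. We treat $j=2$ in detail; the case $j=1$ is strictly simpler, involving only a single sieve variable $s_1$.

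Writing
$$r(\tilde n,u)=\frac{1}{d(u)}\sum_{\substack{s\in S_u,\ c_s\in\{0,\ldots,s-2\}\\ \tilde n\equiv 1-(c_s+1)^k\,(\bmod\ s)}}s^{-1},$$
squaring, and swapping the order of summation yields
\begin{align*}
\Omega(i,l,p,2)=\frac{1}{d(u)^2}\sum_{s_1,s_2\in S_u}\frac{1}{s_1 s_2}\sum_{c_{s_1},c_{s_2}}\sum_{\substack{n\in\mathcal{A}(x)\\ (\ast)}}1_P(L(n))\,w_n(\mathcal{L}_p),
\end{align*}
where $(\ast)$ is the pair of congruences $\tilde n\equiv 1-(c_{s_j}+1)^k\,(\bmod\ s_j)$ for $j=1,2$. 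The diagonal contribution $s_1=s_2$ is bounded, as in Lemma \ref{lima718}, by $\sum_{s\in S_u}s^{-2}\ll(\log x)^{-19}$ times the full sum, and is absorbed into the error term. For $s_1\neq s_2$, the Chinese Remainder Theorem collapses $(\ast)$ into the single congruence $n\equiv e+(h_l-h_i)p\,(\bmod\ s_1 s_2)$, where $e=e(s_1,s_2,c_{s_1},c_{s_2})$ is uniquely determined and $0\leq e<s_1 s_2$.

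For each such tuple, apply Theorem \ref{thm713} with $\mathcal{A}$ replaced by
$$\mathcal{A}^{(s_1,s_2,c_{s_1},c_{s_2})}:=\{n\in\mathbb{Z}: x/2<n\leq x,\ n\equiv e+(h_l-h_i)p\,(\bmod\ s_1 s_2)\},$$
with the same $B$ and $\mathcal{L}_p$. Here $\#\mathcal{A}^{(\cdot)}(x)=(s_1 s_2)^{-1}\#\mathcal{A}(x)+O(1)$, and by uniformity of primes in arithmetic progressions $\#P_{L,\mathcal{A}^{(\cdot)}}(x)=(1+o(1))(s_1 s_2)^{-1}\#P_{L,\mathcal{A}}(x)$. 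Theorem \ref{thm713} then gives, uniformly in the tuple,
\begin{align*}
\sum_{\substack{n\in\mathcal{A}(x)\\ (\ast)}}1_P(L(n))\,w_n(\mathcal{L}_p)
&=\frac{1+O((\log x)^{-1/10})}{s_1 s_2}\cdot\frac{B^{g-1}}{\phi(B)^{g-1}}\mathfrak{G}_B(\mathcal{L}_p)\#P_{L,\mathcal{A}}(x)(\log R)^{g+1}J_g(F)\\
&\quad+O\!\left(\frac{1}{s_1 s_2}\cdot\frac{B^g}{\phi(B)^g}\mathfrak{G}_B(\mathcal{L}_p)\#\mathcal{A}(x)(\log R)^{g-1}J_g(F)\right).
\end{align*}
Summing over $c_{s_1}\in\{0,\ldots,s_1-2\}$ and $c_{s_2}\in\{0,\ldots,s_2-2\}$ multiplies by $(s_1-1)(s_2-1)\sim s_1 s_2$, cancelling one factor $(s_1 s_2)^{-1}$. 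The remaining sum
$$\frac{1}{d(u)^2}\sum_{\substack{s_1,s_2\in S_u\\ s_1\neq s_2}}\frac{1}{s_1 s_2}=\frac{1}{d(u)^2}\Bigl(\sum_{s\in S_u}s^{-1}\Bigr)^2+O\Bigl(\sum_{s\in S_u}s^{-2}\Bigr)=r^*(u)^2+O((\log x)^{-19})$$
produces exactly the claimed $r^*(u)^2$ factor. The case $j=1$ proceeds identically with a single variable $s_1$, yielding $r^*(u)$ in place of $r^*(u)^2$, and both main and error terms in the asserted form.

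The main technical obstacle is verifying that the hypotheses of Theorem \ref{thm713}---in particular the Bombieri--Vinogradov type bound (7.6)---remain valid for the thinned set $\mathcal{A}^{(s_1,s_2,c_{s_1},c_{s_2})}$, with a loss that is absorbable in the summation over $s_1,s_2\in S_u$. Since $s_1,s_2\leq z=x^{o(1)}$ and we need uniformity only over moduli $q\leq x^\theta$ with $\theta$ small, the effective modulus $q s_1 s_2$ stays below $x^{1/2-\varepsilon}$, within the classical Bombieri--Vinogradov range. Once this uniform dispersion estimate is in hand, the above bookkeeping is routine and parallel to that of Lemma \ref{lima718}.
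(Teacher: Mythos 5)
There is a genuine gap, and your route departs from the paper's in a way that creates real difficulties.

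First, a citation problem: you "replace the appeal to Theorem \ref{thm77} by Theorem \ref{thm713}," but $\Omega(i,l,p,j)$ from Definition \ref{def721} is built from the plain Maynard weight $w_n(\mathcal{L}_p)$, not the $k$-th-power-twisted weight $w^*(p,n)$; moreover Theorem \ref{thm713} is stated only for $\mathcal{A}=\mathbb{Z}$. The relevant statement is Theorem \ref{thm78} (Maynard's Proposition 9.2), which does allow a general $\mathcal{A}$ subject to (7.1), (7.2), (7.6). This is not merely a typo: once you correct the reference you still need to check those three hypotheses for your thinned set.

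Second, and more substantively, the paper does not apply the sieve theorem to the congruence-thinned set $\mathcal{A}^{(s_1,s_2,c_{s_1},c_{s_2})}$ as you do. After the CRT step it performs the change of variables $n=ms+e$ with $s=s_1s_2$, turning the inner sum into
\[
\sum_{m\in\mathcal{A}(x/s)}w_m(\mathcal{L}_{p,s})\,1_P(L^*(m,s))+O(1),
\]
with rescaled linear forms $L_{h_i,s}(m)=ms+e+(h_i+h_f)p$, and then invokes Theorem \ref{thm78} with $\mathcal{A}=\mathbb{N}$, the length $x/s$, and the family $\mathcal{L}_{p,s}$. This sidesteps the equidistribution conditions on the thinned set: if you keep $\mathcal{A}=\mathcal{A}^{(s_1,s_2)}$ and the original forms $\mathcal{L}_p$, then for moduli $q$ sharing a factor with $s_1s_2$ the quantity $\#\mathcal{A}^{(s_1,s_2)}(x;q,a)$ is not $\ll \#\mathcal{A}^{(s_1,s_2)}(x)/q$ for the surviving residue class, so (7.1)--(7.2) fail as stated; the reparametrization converts these divisibility obstructions into coefficient constraints on $\mathcal{L}_{p,s}$, where they are handled inside the sieve.

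Third, you identify condition (7.6) (the Bombieri--Vinogradov input for primes represented by the shifted form) as "the main technical obstacle" and assert it is fine because $qs_1s_2\leq x^{1/2-\varepsilon}$. The heuristic is right, but this is not a uniform statement over all $s=s_1s_2$: Siegel-type phenomena make the required asymptotic for $\#P_{L,\mathcal{A}^{(\cdot)}}$ potentially fail for a sparse exceptional set of moduli. The paper addresses exactly this by introducing an exceptional set $\mathcal{E}$ of moduli $s$ with $\sum_{s\in\mathcal{E}}s^{-1}\ll(\log x)^{-4}$, verifying (7.6) for $s\notin\mathcal{E}$ via Bombieri's theorem, and using the trivial bound $1_P(L^*(m,s))=O(1)$ for $s\in\mathcal{E}$; only then is the loss absorbable into the $r^*(u)^j$ sum. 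Your proposal does not carry out this exceptional-set bookkeeping, and without it the claimed uniform asymptotic $\#P_{L,\mathcal{A}^{(\cdot)}}(x)=(1+o(1))(s_1s_2)^{-1}\#P_{L,\mathcal{A}}(x)$ is not justified.

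The overall expansion of $r(\tilde n,u)^2$, the CRT collapse, and the recombination yielding the $r^*(u)^j$ factor are fine, and the $j=1$ case is correctly flagged as simpler. To close the gap, replace the appeal to Theorem \ref{thm713} by Theorem \ref{thm78}, introduce the rescaling $n=ms+e$ and the family $\mathcal{L}_{p,s}$, note that $\mathfrak{G}_B(\mathcal{L}_p)=\mathfrak{G}_B(\mathcal{L}_{p,s})(1+O(1/\log x))$, and handle (7.6) with the Bombieri exceptional set.
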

\begin{proof}
We only give the proof for the hardest case $j=2$. The case $j=1$ is analogous but simpler. We have
\begin{align*}
\Omega(i,l,p,2)&=\sum_{n\in\mathcal{A}(x)}1_P(L(n))w_n(\mathcal{L}_p)r(\tilde{n},u)^2  \\
&=\sum_{n\in\mathcal{A}(x)}1_P(L(n))w_n(\mathcal{L}_p) \left(\frac{1}{d(u)^2}\sum_{\substack{s_1\in S_u,\ c_{s_1}\in\{1,\ldots, s_1-2\}\\ \tilde{n}\equiv1-(c_{s_1}+1)^k(\bmod s_1)}}s_1^{-1}\right)\\
&\ \ \ \ \times \left(\sum_{\substack{s_2\in S_u,\ c_{s_2}\in\{1,\ldots, s_2-2\}\\ \tilde{n}\equiv1-(c_{s_2}+1)^k(\bmod s_2)}}s_2^{-1}\right)\\
&= \frac{1}{d(u)^2}\sum_{s_1,s_2\in S_u} s_1^{-1}s_2^{-1}\sum_{c_{s_1}=1}^{s_1-2}\sum_{c_{s_2}=1}^{s_2-2}\ \sum_{\substack{n\equiv 1-(c_{s_1}+1)^k+(h_l-h_i)p(\bmod s_1)\\ n\equiv 1-(c_{s_2}+1)^k+(h_l-h_i)p(\bmod s_2)\\ n\in\mathcal{A}(x)}} 1_P(L(n))w_n(\mathcal{L}_p)\:.
\end{align*}
We deal only with the case $s_1\neq s_2$ for the inner sum, the case $s_1=s_2$
giving a negligible contribution. The inner sum is non-empty if and only if the system
\begin{eqnarray}
\nonumber\\
(*)\ \ \ \left\{ 
  \begin{array}{l l}
    {n}\equiv 1-(c_{s_1}+1)^k(\bmod s_1)\vspace{2mm}\\ 
   {n}\equiv 1-(c_{s_2}+1)^k(\bmod s_2)\vspace{2mm}\\ 
  \end{array} \right.
\nonumber
\end{eqnarray}
is solvable.\\
In this case the system is equivalent to a single congruence $n\equiv e(s_1,s_2,c_1, c_2)$ is uniquely determined by the system (*) and $0\leq e\leq s_1s_2-1$. The inner sum then takes the form
$$\sum_{\substack{n\equiv e(\bmod s_1s_2)\\ n\in \mathcal{A}(x)}}1_P(L(n))w_n(\mathcal{L}_p)\:.$$
By the substitution $n=ms+e$, we get
$$L(n)=L^*(m,s)=ms+e+h_fp\:.$$
We set $\mathcal{L}_p=\{L_{h_i}\}$, where $L_{h_i}(n)=n+h_ip$ is replaced by the set $\mathcal{L}_{p,s}=\{L_{h_i,s}\}$, where 
$$L_{h_i,s}(m)=ms+e+(h_i+h_f)p\:. $$
We thus have
\begin{align*}
\sum(s)&:=\sum_{\substack{n\equiv e(\bmod s)\\ n\in \mathcal{A}(x)}}1_P(L(n))w_n(\mathcal{L}_p)\\
&=\sum_{m\in\mathcal{A}\left(\frac{x}{s}\right) }w_m(\mathcal{L}_{p,s})1_P(L^*(m,s))+O(1)\:.
\end{align*}
We apply Theorem \ref{thm78} with $\mathcal{A}:=\mathbb{N}$, $x/s$ instead of $x$,
$L(\cdot)=L^*(\cdot,s)$, $\mathcal{L}=\mathcal{L}_{p,s}$.\\
We have 
$$\mathfrak{G}_B(\mathcal{L}_p)=\mathfrak{G}_B(\mathcal{L}_{p,s})\left(1+O\left(\frac{1}{\log x}\right)\right)\:.$$
From Bombieri's Theorem it can easily be seen that the conditions (7.6) are satisfied 
for all $s$ with the possible exception of $s\in \mathcal{E}$, $\mathcal{E}$ being an exceptional set, satisfying
$$\sum_{s\in \mathcal{E}}s^{-1}\ll (\log x)^{-4}\:.$$
For $s\in \mathcal{E}$ we use the trivial bound $1_P(L^*(m,s))=O(1)$. Thus, we obtain the claim of Lemma \ref{lima722} for the case $j=2$.\\
The proof for $j=1$ is analogous but simpler, since we have only to sum over the single variable $s_1$.
\end{proof}
\begin{lemma}\label{lima723}
Let $i,l,p$ as in Definition \ref{def721}. We have
\begin{align*}
\Omega(i,l,p)&=O\left( \frac{B^{g-1}}{\phi(B)^{g-1}}|\mathfrak{G}_B(\mathcal{L}_p)|\#\mathcal{P}_{L,\mathcal{A}}(x)(\log R)^{g+1} J_g(F)r^*(u)^2(\log x)^{-1/10}\right)\\
&\ \ \ \ + O\left(  \frac{B^g}{\phi(B)^g}|\mathfrak{G}_B(\mathcal{L}_p)|\#\mathcal{A}(x)(\log R)^{g-1} J_g(F) r^*(u)^2  \right)
\end{align*}
\end{lemma}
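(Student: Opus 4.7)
The plan is to mimic the proof of Lemma \ref{lima719} with the extra prime indicator $1_P(L(n))$ carried through the computation. Namely, expand the square
\[
(r(\tilde n,u)-r^*(u))^2 = r(\tilde n,u)^2 - 2r^*(u)\,r(\tilde n,u) + r^*(u)^2,
\]
multiply by $1_P(L(n))\,w_n(\mathcal{L}_p)$, and sum over $n\in\mathcal{A}(x)$ to obtain the identity
\[
\Omega(i,l,p)=\Omega(i,l,p,2)-2r^*(u)\,\Omega(i,l,p,1)+r^*(u)^2\,\Omega(i,l,p,0).
\]
All three quantities on the right are of the type treated by Lemma \ref{lima722}: for $j\in\{1,2\}$ we quote it directly, and for $j=0$ we observe that $\Omega(i,l,p,0)=\sum_{n\in\mathcal{A}(x)}1_P(L(n))\,w_n(\mathcal{L}_p)$ is precisely the sum estimated by Theorem \ref{thm78} (Maynard's Proposition 9.2).

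I would then substitute the three asymptotics. Writing $M:=\frac{B^{g-1}}{\phi(B)^{g-1}}\mathfrak{G}_B(\mathcal{L}_p)\#P_{L,\mathcal{A}}(x)(\log R)^{g+1}J_g(F)$ and $E:=\frac{B^g}{\phi(B)^g}\mathfrak{G}_B(\mathcal{L}_p)\#\mathcal{A}(x)(\log R)^{g-1}J_g(F)$, Lemma \ref{lima722} and Theorem \ref{thm78} give, for each $j\in\{0,1,2\}$,
\[
\Omega(i,l,p,j)=M\bigl(1+O((\log x)^{-1/10})\bigr)r^*(u)^j+O\!\bigl(E\,r^*(u)^j\bigr).
\]
When these are combined into the identity above, the leading contribution is $M\,r^*(u)^2\bigl(1-2+1\bigr)=0$; hence the principal parts cancel exactly and the only contribution from the $M$-terms comes from the relative error $(\log x)^{-1/10}$, producing the first error term $O\!\bigl(M\,r^*(u)^2(\log x)^{-1/10}\bigr)$. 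The $E$-terms accumulate without cancellation (since $r^*(u)\ll 1$, we can bound $|r^*(u)^j|\leq r^*(u)^2+1\ll r^*(u)^2$ for the purposes of absorbing them), giving the second error term $O\!\bigl(E\,r^*(u)^2\bigr)$, which matches the statement.

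The main obstacle is the exact cancellation at the level of main terms: it requires the constant $M$ to be the same in all three applications (up to multiplicative relative error $O((\log x)^{-1/10})$) and to depend on $j$ only through the factor $r^*(u)^j$. This is guaranteed by the form of Lemma \ref{lima722} (whose proof reduces each case to Theorem \ref{thm78} applied to arithmetic progressions $n\equiv e\pmod{s_1s_2}$ or $n\equiv e\pmod{s_1}$, with the sum over $s_1,s_2\in S_u$ producing precisely the factor $r^*(u)^j$ through Definition \ref{def61}), so we only need to verify that Theorem \ref{thm78} is applicable for $\mathcal{L}_{p,s}$ uniformly outside a negligible exceptional set of moduli -- exactly the Bombieri-type input already invoked at the end of the proof of Lemma \ref{lima722}. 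Once this uniformity is in hand, the three-term telescoping and the bounds $r^*(u)\ll 1$ finish the proof.
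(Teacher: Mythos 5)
Your proof follows the paper's approach exactly: expand the square to write $\Omega(i,l,p)=\Omega(i,l,p,2)-2r^*(u)\,\Omega(i,l,p,1)+r^*(u)^2\,\Omega(i,l,p,0)$, estimate the three pieces by Lemma~\ref{lima722} (for $j=1,2$) and Theorem~\ref{thm78} (for $j=0$), and observe that the main terms cancel as $1-2+1=0$. One minor slip: you invoke ``$r^*(u)\ll 1$'' to absorb the secondary $E$-terms, whereas in fact $r^*(u)\asymp \log_2 x\to\infty$ by Mertens' theorem applied to $S_u$. Fortunately no size hypothesis on $r^*(u)$ is needed at all: the coefficient $r^*(u)^{2-j}$ sitting in front of $\Omega(i,l,p,j)$ in the three-term identity pairs with the error $O\bigl(E\,r^*(u)^j\bigr)$ to give $O\bigl(E\,r^*(u)^2\bigr)$ directly for every $j\in\{0,1,2\}$, so the absorption is automatic.
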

\begin{proof}
By Definition \ref{def721} we have 
$$\Omega(i,l,p)=\Omega(i,l,p,2)-2r^*(u)\Omega(i,l,p,1)+r^*(u)^2\Omega(i,l,p,0)\:.$$
Lemma \ref{lima723} now follows by applying Lemma \ref{lima722}.
\end{proof}
\begin{theorem}\label{thm724}
Let $p$, $L(n)$ be as in Definition \ref{def721}. Then we have
$$\sum_{\substack{n\in \mathcal{A}(x)\\ n\not\in \mathcal{G}(p)}}1_P(L(n))w_n(\mathcal{L}_p)\ll  \frac{B^{g-1}}{\phi(B)^{g-1}}\mathfrak{G}_B(\mathcal{L}_p)\#\mathcal{P}_{L,\mathcal{A}}(x)(\log R)^{g+1} J_g(F)(\log x)^{-1/100} $$
\end{theorem}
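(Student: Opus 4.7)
The plan is to mirror the argument of Theorem~\ref{thm720}, now with the primality indicator $1_P(L(n))$ inserted throughout. The needed analogue of Lemma~\ref{lima719} is precisely Lemma~\ref{lima723}, so the structural scaffolding is already in place.

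First, by Definition~\ref{def716}, if $n\notin\mathcal{G}(p)$ then there is a pair $(i,l)$ with $1\le i,l\le r$ such that $\tilde n=n+(h_i-h_l)p\notin\mathcal{G}$. This gives the inequality
\[
\sum_{\substack{n\in\mathcal{A}(x)\\ n\notin\mathcal{G}(p)}}1_P(L(n))w_n(\mathcal{L}_p)\le \sum_{1\le i,l\le r}\mathcal{C}(i,l,p),
\]
the $1_P$-weighted analogue of Lemma~\ref{lima717}. For each fixed pair $(i,l)$, I pick any residue $u$ modulo $k$ with $(u,k)=1$; Definition~\ref{def61} then forces $|r(\tilde n,u)-r^*(u)|\ge r^*(u)(\log x)^{-1/40}$ pointwise on the relevant $n$. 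Weighting this inequality by $1_P(L(n))w_n(\mathcal{L}_p)$ and summing over $n\in\mathcal{A}(x)$ yields
\[
r^*(u)^2(\log x)^{-1/20}\,\mathcal{C}(i,l,p)\le \Omega(i,l,p).
\]

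Next I invoke Lemma~\ref{lima723}, which bounds $\Omega(i,l,p)$ by a main term carrying the factor $r^*(u)^2(\log x)^{-1/10}$ plus a secondary error of lower order. Dividing through by $r^*(u)^2(\log x)^{-1/20}$ gives
\[
\mathcal{C}(i,l,p)\ll \frac{B^{g-1}}{\phi(B)^{g-1}}\,\mathfrak{G}_B(\mathcal{L}_p)\,\#\mathcal{P}_{L,\mathcal{A}}(x)\,(\log R)^{g+1}J_g(F)(\log x)^{-1/20}.
\]
Since the number of pairs $(i,l)$ is at most $r^2\le (\log x)^{2\eta_0}$ by (6.1), summing over all pairs absorbs this factor into the exponent $-1/100$ provided $\eta_0$ is chosen sufficiently small (for instance $\eta_0<1/50$), which yields the claim.

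The main obstacle is verifying that the secondary error term in Lemma~\ref{lima723} remains negligible after division by $r^*(u)^2(\log x)^{-1/20}$. Its ratio to the primary term is bounded by $(B/\phi(B))\cdot\#\mathcal{A}(x)/\bigl[\#\mathcal{P}_{L,\mathcal{A}}(x)(\log R)^2(\log x)^{-1/10}\bigr]\ll (\log x)^{-9/10}$ by the prime number theorem together with $\log R\asymp \log x$, so the secondary contribution is indeed of lower order. One also needs $r^*(u)$ to be bounded below uniformly in $x$ for at least one admissible $u$, which follows from Mertens-type estimates for primes in the arithmetic progressions defining $S_u$ and is standard.
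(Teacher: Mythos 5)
Your proposal mirrors the paper's own proof of Theorem~\ref{thm724} almost line for line: decompose the failure of $n\in\mathcal{G}(p)$ over pairs $(i,l)$, use the deviation bound from Definition~\ref{def61} to insert the factor $r^*(u)^2(\log x)^{-1/20}$, invoke Lemma~\ref{lima723}, and sum over the $r^2\le(\log x)^{2\eta_0}$ pairs. Your quantitative treatment of the secondary term in Lemma~\ref{lima723} — comparing $\#\mathcal{A}(x)$ to $\#\mathcal{P}_{L,\mathcal{A}}(x)\asymp\#\mathcal{A}(x)/\log x$ and using $\log R\asymp\log x$ — is the same absorption step the paper performs (stated there only via $\log R\asymp\log x$), and the final loss of exponent from $-1/20$ to $-1/100$ for small $\eta_0$ matches.

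One small inaccuracy worth flagging, though it is also glossed over in the paper: you write that for a fixed pair $(i,l)$ you ``pick any residue $u$'' and that Definition~\ref{def61} then forces $|r(\tilde n,u)-r^*(u)|\ge r^*(u)(\log x)^{-1/40}$ pointwise on the relevant $n$. In fact, $\tilde n\notin\mathcal{G}$ only guarantees that \emph{some} $u$ (depending on $\tilde n$) witnesses the large deviation, not that an arbitrarily pre-chosen $u$ does. The repair is routine: sum the resulting bound over the $\phi(k)=O(1)$ admissible residues $u$, which covers every $n$ with $\tilde n\notin\mathcal{G}$ and only costs a bounded constant. With that bookkeeping fix your argument is complete and coincides with the paper's.
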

\begin{proof}
Let $1\leq i,l\leq r$. By Definition \ref{def61} we have:
$$\tilde{n}=n+(h_i-h_l)p\in \mathcal{G}\:.$$
It follows that
$$|r(\tilde{n},u)-r^*(u)|\geq r^*(u)(\log x)^{-1/40}\:.$$
Thus
\begin{align*}
&r^*(u)^2(\log x)^{-1/20}\sum_{n\in \mathcal{A}(x)\::\: \tilde{n}\not\in \mathcal{G}(p)}1_P(L(u))w_n(\mathcal{L}_p) \\
&\ \ \  \ll \frac{B^{g-1}}{\phi(B)^{g-1}}\mathfrak{G}_B(\mathcal{L}_p)\#\mathcal{P}_{L,\mathcal{A}}(x)(\log R)^{g+1} J_g(F)r^*(u)^2(\log x)^{-1/16}\\
 &\ \ \ \ \ \ \ + \frac{B^{g}}{\phi(B)^{g}}\mathfrak{G}_B(\mathcal{L}_p)\#\mathcal{A}(x)(\log R)^{g-1} J_g(F)r^*(u)^2\tag{7.15}
\end{align*}
The second term is absorbed in the first one, since by Definition \ref{def76}:
$$x^{\theta/10}\leq R\leq x^{\theta/3}$$ 
and thus 
$$\log R \asymp \log x\:.$$
Therefore
$$\mathcal{C}(i,l,p)\ll   \frac{B^{g-1}}{\phi(B)^{g-1}}\mathfrak{G}_B(\mathcal{L}_p)\#\mathcal{P}_{L,\mathcal{A}}(\log R)^{g+1} J_g(F)(\log x)^{-1/20}$$
The claim of the Theorem \ref{thm724} now follows by summing over all pairs $(i,j)$.
\end{proof}
We now can conclude the proof of Theorem \ref{thm5} and therefore also the proof of Theorem \ref{thm1}.\\
By Theorems \ref{thm711}, \ref{thm713}, \ref{thm720}, \ref{thm724} we have
\[
\sum_{n\in \mathcal{A}(x)}w(p,n)=\left(1+O\left(\frac{1}{(\log x)^{1/100}}\right)\right)\sum_{n\in \mathcal{A}(x)}w_n(\mathcal{L}_p) \tag{7.16}
\]
and 
\[
\sum_{n\in \mathcal{A}(x)}1_P(L(n))w(p,n)=\left(1+O\left(\frac{1}{(\log x)^{1/100}}\right)\right)\sum_{n\in \mathcal{A}(x)}1_P(L(n))w_n(\mathcal{L}_p)\tag{7.17}
\]
The deduction of the equations (6.4) and (6.5) of Theorem \ref{thm5} can thus be deduced from results on the sums on the right hand side of equations (7.16) 
and (7.17). These results are obtained in Section 8 (Verification of sieve estimates) of \cite{ford2}.\\ \\
\noindent\textbf{Acknowledgements}. The second author (M. Th. R.) wishes to 
express his gratitude to Professors E. Kowalski and P. Sarnak for providing him the opportunity to conduct postdoctoral research at the Department of Mathematics of Princeton University during the academic year 2014-2015, where the present work was initiated. He would also like to express his gratitude to Professors A. Nikeghbali and P.-O. Dehaye for providing him the opportunity to currently conduct postdoctoral research at the Institute of Mathematics of the University of Zurich. The completion of his work was supported by the
SNF grant: SNF PP00P2\textunderscore138906 of the Swiss National Science Foundation.
\vspace{10mm}

\end{document}